\documentclass[hidelinks,onefignum,onetabnum]{siamart251216}

\usepackage{lipsum}
\usepackage{amsfonts}
\usepackage{graphicx}
\usepackage{epstopdf}
\usepackage{amsfonts}
\usepackage{amssymb}
\usepackage{enumitem}
\usepackage{algorithmic}
\ifpdf
  \DeclareGraphicsExtensions{.eps,.pdf,.png,.jpg}
\else
  \DeclareGraphicsExtensions{.eps}
\fi

\newsiamremark{remark}{Remark}
\newsiamremark{hypothesis}{Hypothesis}
\crefname{hypothesis}{Hypothesis}{Hypotheses}
\newsiamthm{claim}{Claim}
\newsiamremark{fact}{Fact}
\crefname{fact}{Fact}{Facts}

\headers{Tensor Envelopes in the Nonconvex Setting}{A. Kabgani and M. Ahookhosh}

\title{Tensor Envelopes in the Nonconvex Setting\thanks{
\funding{The Research Foundation Flanders (FWO) research project G081222N and UA BOF DocPRO4 projects with ID 46929 and 48996 partially supported the paper's authors.}}}

\author{Alireza Kabgani\thanks{Department of Mathematics, University of Antwerp, Antwerp, Belgium.
  (\email{alireza.kabgani@uantwerp.be}, \email{masoud.ahookhosh@uantwerp.be}).}
\and Masoud Ahookhosh\footnotemark[2]}

\usepackage{amsopn}

\newsiamremark{example}{Example}

\newcommand\R{\mathbb{R}}
\newcommand{\E}{\mathbb{E}}
\newcommand{\N}{\mathbb{N}}
\newcommand\Rinf{\overline{\mathbb{R}}}
\newcommand\dom[1]{ \bs{{\rm dom}}(#1)} 
\newcommand\dist{ \bs{{\rm dist}}} 
\newcommand\gf{\varphi} 
\newcommand\gh{\psi} 
\newcommand\fgam[3]{#1_{#3}^{#2}}

\newcommand\prox[3]{ \bs{{\rm prox}}_{#2#1}^{#3}}

\newcommand\ov[1]{\overline{#1}}
\newcommand\mb{\mathbf{B}}
\newcommand\bs[1]{\boldsymbol{#1}}

\newcommand{\Model}{\mathcal{Q}}
\newcommand{\Fix}{\operatorname{\textbf{Fix}}}
\newcommand\argmint[1]{\mathop{\bs{\arg\min}}\limits_{#1}}
\newcommand\Nz{\mathbb{N}_0}

\newcommand{\Om}[2]{\Omega_{#1,#2}}
\newcommand\Tprox[3]{\bs T_{#2}^{#3}#1}
\newcommand\maj[1]{\mathcal{M}^{#1}}
\newcommand\argmin[1]{\bs{\arg\min}_{#1}}

\newcommand{\M}{\mathcal{M}}
\newcommand{\pos}[1]{\left(#1\right)_{+}}

\newcommand{\ip}[2]{\left\langle #1,#2\right\rangle}
\newcommand{\norm}[1]{\left\lVert#1\right\rVert}
\newcommand{\abs}[1]{\left\lvert#1\right\rvert}

\newcommand{\crit}{\operatorname{\textbf{crit}}}
\newcommand\inft[1]{\mathop{\bs{\inf}}\limits_{#1}}
\newcommand\supt[1]{\mathop{\bs{\sup}}\limits_{#1}}
\newcommand\liminft[1]{\mathop{\bs{\liminf}}\limits_{#1}}
\newcommand\limsupt[1]{\mathop{\bs{\limsup}}\limits_{#1}}

\newcommand\maxt[1]{\mathop{\bs{\max}}\limits_{#1}}
\newcommand\mint[1]{\mathop{\bs{\min}}\limits_{#1}}
\let\oldliminf\liminf
\renewcommand{\liminf}{\bs{\oldliminf}}
\let\oldlimsup\limsup
\renewcommand{\limsup}{\bs{\oldlimsup}}
\let\oldmax\max
\renewcommand{\max}{\bs{\oldmax}}
\let\oldmin\min
\renewcommand{\min}{\bs{\oldmin}}
\let\oldsup\sup
\renewcommand{\sup}{\bs{\oldsup}}
\let\oldinf\inf
\renewcommand{\inf}{\bs{\oldinf}}
\let\oldlim\lim
\renewcommand{\lim}{\bs{\oldlim}}

\ifpdf
\hypersetup{
  pdftitle={Tensor Envelopes in the Nonconvex Setting},
  pdfauthor={A. Kabgani and M. Ahookhosh}
}
\fi

\begin{document}

\maketitle

\begin{abstract}
We introduce a tensor envelope (TEE) for composite objectives $\gf=f+g$, built from a $q$th-order Taylor model of the smooth term $f$ regularized by a $p$th power with $p>q$. To construct it, we first establish a tensor descent lemma and characterize its behavior in the three regimes
$q<p<q+1$, $p=q+1$, and $p>q+1$. Under suitable prox-boundedness and parameter conditions, the associated tensor operator (TOP) has nonempty compact values, and TEE is finite-valued and continuous, preserves global minimizers, and inherits the level-boundedness of $\gf$. We further show that TEE is locally Lipschitz continuous and directionally differentiable, characterize its Fr\'echet differentiability, and prove that it is $C^1$ whenever TOP is single-valued. As such, TEE preserves the essential optimization landscape properties of $\gf$ while enjoying a richer
analytical structure, which facilitates the design and analysis of tensor-based optimization methods. Finally, TEE serves as a Lyapunov function for the exact tensor iteration: every cluster point is critical, and, for bounded trajectories with $q<p\le q+1$, an $\epsilon$-stationary
point is reached within $\mathcal{O}(\epsilon^{-p/(p-1)})$ iterations. To our knowledge, both the tensor descent lemma and this complexity bound are the first of their kind obtained without assuming global Lipschitz or H\"older continuity of the $q$th-order derivative of $f$.

\end{abstract}

\begin{keywords}
nonsmooth and nonconvex optimization, majorization minimization, tensor envelope, tensor descent lemma,  generic tensor method, complexity analysis.
\end{keywords}

\begin{MSCcodes}
49J52, 65K10, 90C26, 49J53
\end{MSCcodes}

\section{Introduction}
Composite minimization models play a fundamental role in modern optimization, with a broad range of applications involving nonsmooth and nonconvex objectives. Classical methods for nonsmooth optimization, such as subgradient and bundle methods, can be sensitive to parameter tuning, and subgradient methods in particular are known to converge slowly. To tame nonsmoothness, a large class of proximal envelopes has been introduced (e.g., Moreau \cite{Moreau65}, forward-backward \cite{Stella17,themelis2019acceleration,Themelis18}, and Douglas-Rachford \cite{themelis2020douglas} envelopes, as well as their Bregman and higher-order variants \cite{Ahookhosh21,Kabganidiff,Kabgani24itsopt,kabgani2025fundamental}) to stand as exact and smooth surrogates for the original objective under suitable conditions. In particular, while retaining the minimizers and stationary points of the objective is essential from an optimization perspective, the smoothing property of proximal envelopes offers the possibility of designing efficient first- and second-order methods; cf. \cite{Ahookhosh21,Kabgani25itsdeal,Kabgani24itsopt}. Furthermore, they can stand as essential tools to further boost proximal splitting algorithms via special line search on these envelopes; see, e.g., \cite{Ahookhosh21,kabgani2025fb,Kabgani25itsdeal,Kabgani24itsopt,Stella17,themelis2019acceleration,themelis2020douglas,Themelis18}. 

Let us consider the composite minimization problem
\begin{equation}\label{eq:intro_problem}
\mint{x\in\E}\ \gf(x):=f(x)+g(x),
\end{equation}
where $\E$ is a finite-dimensional real vector space, $f:\E\to\R$ is sufficiently smooth, and $g:\E\to\Rinf$ is proper and lower semicontinuous (lsc). Since $f$ is smooth, it is natural to consider the model-based subproblem
\begin{equation}\label{eq:opt_model}
\mint{y\in\E}\left\{\M_{f,x}(y)+g(y)+\frac{1}{p\gamma}\|y-x\|^p\right\},
\end{equation}
where $\M_{f,x}$ denotes a smooth model of $f$ around $x$, the term $\frac{1}{p}\|\cdot-x\|^p$ serves as a regularization term with $p>1$, and $\gamma>0$ is a regularization parameter. Depending on the order of smoothness information available for $f$, one may consider its zero-, first-, second-, or higher-order Taylor approximation at $x$, leading to a broad range of models that include proximal methods \cite{Ahookhosh2025,Ahookhosh24,Kabgani24itsopt,martinet1970breve, martinet1972determination, Nesterov2023a,rockafellar1976monotone2}, proximal-gradient methods \cite{beck2017first,fukushima1981generalized}, cubic regularization methods \cite{cartis2011adaptive,CartisGouldToint2011ARC,doikov2021minimizing,NesterovPolyak2006}, and tensor methods \cite{birgin2017worst,CartisGouldToint2020Sharp,doikov2022local,doikov2020inexact,martinez2017high,Nesterov2021,nesterov2021superfast}. With a suitable envelope in hand, these methods can be further accelerated through an appropriate line search, as demonstrated in the ItsOPT framework \cite{Kabgani24itsopt}.

Recent results on high-order tensor methods confirm the effectiveness of employing high-order Taylor models in the composite setting when the function $f$ is sufficiently smooth; see, e.g., \cite{GrapigliaNesterov2020HolderTensor,grapiglia2022tensor,NabouNecoara2024Composite,NabouNecoara2026,NecoaraLupu2025,nesterov2021superfast}. These considerations, together with the boosted versions of these methods developed within the ItsOPT framework (cf. \cite{Kabgani24itsopt}), motivate the development and study of proximal envelopes in high-order settings, which exploit higher-order local information of the original objective. In detail, let $\Omega^f_{x,q}$ denote the $q$th-order Taylor expansion of $f$ at a point $x$. 
For an integer $q\ge 1$, a regularization power $p>q$, and $\gamma>0$, we consider the {\it high-order tensor operator (TOP)} given by
\begin{equation}\label{eq:tpo}
    \Tprox{\gf}{\gamma}{p,q}(x):=\argmint{y\in\E} \left\{\Omega^f_{x,q}(y)+g(y)+\frac{1}{p\gamma}\|y-x\|^p\right\},
\end{equation}
and define its corresponding {\it tensor envelope (TEE)} as
\begin{equation}\label{eq:value}
    \fgam{\gf}{p,q}{\gamma}(x):=\inft{y\in\E} \left\{\Omega^f_{x,q}(y)+g(y)+\frac{1}{p\gamma}\|y-x\|^p\right\}.
\end{equation}
If the one-sided Taylor remainder satisfies
\[
f(y)\leq \Omega^f_{x,q}(y)+\frac{L}{p}\|y-x\|^p \quad \forall x,y\in\E,
\]
then the model in \eqref{eq:value} majorizes $\gf$ whenever $\gamma^{-1}\ge L$. A central question is therefore to understand when such a one-sided majorization property holds and how it depends on the Taylor order $q$ and the regularization power $p$.
For $q=1$, the above construction reduces to the high-order forward--backward mapping and envelope studied in \cite{kabgani2025fb,kabgani2025fundamental}. For higher-order Taylor models, however, despite the extensive development of tensor models and high-order proximal methods, the fundamental properties of the resulting tensor envelope remain largely unexplored. In particular, its
majorization, well-definedness, continuity, differentiability, and stationarity properties, and hence its optimization landscape, require a separate analysis, which is the main topic of the current study.

\subsection{{\bf Contribution}}\label{sec:contribution}

Our contributions are twofold:
\begin{description}[wide, labelwidth=!, labelindent=0pt]
\item[(i)] \textbf{Tensor descent lemma.}
We introduce a one-sided majorization condition for the $q$th-order Taylor model of $f$ regularized by a power $p>q$ and study the resulting class $\maj{q}(p,L)$. We characterize this class in the three regimes $q<p<q+1$, $p=q+1$, and $p>q+1$. For $q<p\le q+1$, we relate the majorization property to a one-sided H\"older-type variation of $D^qf$. At the endpoint $p=q+1$, we uncover a parity-dependent structure: for even $q$, the condition is equivalent to global Lipschitz continuity of $D^qf$, whereas for odd $q$ it controls only the positive part of the $(q+1)$st-order directional derivative. For $p>q+1$, we show that any finite majorization constant necessarily collapses to zero. Examples show that the resulting class can extend beyond functions whose $q$th derivative is globally H\"older continuous.

\item[(ii)]\textbf{Tensor envelope and generic tensor method.}
We introduce the tensor operator (TOP) and its associated tensor envelope (TEE) and establish their basic variational properties, including nonemptiness and compactness of TOP and finite-valuedness, continuity, and level-boundedness properties of TEE. Under suitable assumptions, TEE and the original objective have the same global minimizers. We further derive local Lipschitz and directional differentiability properties, characterize Fr\'echet differentiability, and show that single-valuedness of TOP yields $C^1$ regularity. For the canonical choice $p=q+1$, we characterize directional stationary points of TEE through fixed points of TOP and relate them to critical points of the original objective. Finally, using TEE as a Lyapunov function for the exact tensor iteration, we establish asymptotic regularity and criticality of cluster points. For bounded trajectories with $q<p\le q+1$, we also obtain the first-order stationarity complexity $\mathcal O\left(\epsilon^{-p/(p-1)}\right)$, without imposing global H\"older or Lipschitz continuity on $D^qf$.
To the best of our knowledge, this is the first study of the tensor envelope (beyond the first-order forward-backward \cite{kabgani2025fundamental,Stella17,Themelis18}) and systematically investigate its fundamental and differentiability properties, thereby opening a new avenue for the development of efficient algorithms for nonconvex composite optimization; see, e.g., \cite{Kabgani25itsdeal,Kabgani24itsopt}.
\end{description}

\subsection{{\bf Organization}}\label{sec:organization}
The remainder of the paper is organized as follows.
Section~\ref{sec:prelim} introduces the notation and preliminary results.
Section~\ref{sec:ten_maj} studies the tensor descent lemma and its different 
regularization regimes. Section~\ref{sec:Ten_env} develops the tensor operator and tensor
envelope, including their basic properties, together with stationarity complexity bounds for generic tensor method.
Section~\ref{sec:conclusion} concludes the paper.

\section{Preliminaries and notation} \label{sec:prelim}
Let $\E$ be a finite-dimensional real vector space with dual $\E^*$, and let $\ip{s}{x}$ denote the dual pairing between $s\in\E^*$ and $x\in\E$. We set $\N_0:=\N\cup\{0\}$.
For $C\subseteq\E$, let $\delta_C$ denote its indicator function.
For $a\in\R$, set $\pos{a}:=\max\{a,0\}$. Let $B:\E\to\E^*$ be self-adjoint and positive definite, $B=B^*\succ0$, and define $\norm{x}:=\ip{Bx}{x}^{1/2}$ for $x\in \E$ and $\norm{s}_*:=\ip{s}{B^{-1}s}^{1/2}$ for $s\in \E^*$.
For $p>1$ and $u\in \E$, $\nabla \frac{1}{p}\norm{u}^p=\norm{u}^{p-2}Bu$,
where the right-hand side is understood as zero at $u=0$.
We adopt the conventions $(+\infty)^{-1}:=0$ and $0^{-1}:=+\infty$.
For $\gh:\E\to\Rinf:=\R\cup\{+\infty\}$, we set
$\dom \gh:=\{x\in\E: \gh(x)<+\infty\}$. The function $\gh$ is coercive if $\bs\lim_{\Vert x \Vert \to +\infty} \gh(x) = +\infty$.
We use the beta--gamma identity
\begin{equation}\label{eq:beta-gamma}
\int_0^1 t^{a-1}(1-t)^{b-1}\,dt = \frac{\Gamma(a)\Gamma(b)}{\Gamma(a+b)}, \quad a,b>0,
\end{equation}
where $\Gamma(m)=(m-1)!$ for $m\in\N$.
For $j\geq 1$ and $f\in C^j(\E)$, the $j$th derivative $D^j f(x):\E^j\to\R$ is a symmetric $j$-linear form. By $D^j f(x)[h]^j:=D^j f(x)[h,\ldots,h]$, its operator norm is
\[
\norm{D^j f(x)}:=\max\left\{\left|D^j f(x)[h_1,\ldots,h_j]\right|:\norm{h_i}\le 1,
\ i=1,\ldots,j\right\}.
\]
For every symmetric $j$-linear form $A$ on $\E$, 
\cite[Eq.~(1.2)]{Nesterov2021},
\begin{equation}\label{eq:symmetric_tensor_norm}
\|A\| = \maxt{\|h\|\le1}|A[h]^j|.
\end{equation}
For $f\in C^q(\E)$, the order-$q$ Taylor polynomial of $f$ at $x$ evaluated at $y$ is denoted by
\[
\Om{x}{q}^f(y):=f(x)+\sum_{j=1}^q \frac{1}{j!}D^j f(x)[y-x]^j.
\]
For $\nu\in(0,1]$, $D^q f$ is globally $\nu$-H\"older continuous with constant $H\geq0$ if
\[
\norm{D^q f(y)-D^q f(x)}\le H\norm{y-x}^{\nu} \quad \forall x,y\in \E.
\]
The case $\nu=1$ corresponds to global Lipschitz continuity.

For $x\in\dom \gh$, the \textit{Fr\'{e}chet/regular} and \textit{Mordukhovich/limiting subdifferentials} of $\gh$ at $x$ \cite{Mordukhovich2018,Rockafellar09} are, respectively, 
\[
\widehat\partial \gh(x):=\left\{v\in\E^*:\liminft{\substack{y\to x\\ y\ne x}}\frac{\gh(y)-\gh(x)
-\langle v,y-x\rangle}{\|y-x\|}\ge 0\right\},
\]
and
\[
\partial \gh(x):=\left\{v\in\E^*:\exists\, x^k\to x,\ \gh(x^k)\to \gh(x),\ v^k\to v,
\ v^k\in\widehat\partial \gh(x^k)\right\}.
\]
For $x\notin\dom\gh$, we set $\widehat{\partial}\gh(x)=\partial\gh(x):=\emptyset$.
For $\varphi=f+g$, define
\[
\crit(\gf):=\{x\in\dom g:0\in \nabla f(x)+\partial g(x)\}.
\]
We recall the high-order proximal operator and the high-order Moreau envelope.
\begin{definition}[High-order proximal operator and Moreau envelope]
Let $p>1$, $\gamma>0$, and let $\psi:\E\to\Rinf$ be proper. The high-order proximal operator (\textit{HOPE}) is
\begin{equation}\label{eq:Hiorder-Moreau prox}
\prox{\psi}{\gamma}{p}(x):=\argmint{y\in\E}\left\{\psi(y)+\frac{1}{p\gamma}\|x-y\|^p\right\}.
\tag{HOPE}
\end{equation}
The high-order Moreau envelope (\textit{HOME}) is
\begin{equation}\label{eq:Hiorder-Moreau env}
\fgam{\psi}{p}{\gamma}(x):=\inft{y\in\E}\left\{\psi(y)+\frac{1}{p\gamma}\|x-y\|^p\right\}.
\tag{HOME}
\end{equation}
\end{definition}
We recall the following notion of high-order prox-boundedness.
\begin{definition}[High-order prox-boundedness]\label{def:s-prox-bounded}\cite[Definition~3.3]{Kabgani24itsopt}
A proper function $\psi:\E\to\Rinf$ is high-order prox-bounded of order $p>1$ if there are $\gamma>0$ and $x\in\E$ such that $\fgam{\psi}{p}{\gamma}(x)>-\infty$. The supremum of all such parameters is denoted by $\gamma^{\psi,p}$ and is called the high-order prox-boundedness threshold of $\psi$.
\end{definition}

\begin{remark}\label{rem:prox-bounded}
If $\psi$ is convex or bounded below, then $\gamma^{\psi,p}=+\infty$.
\end{remark}
We conclude this section by recalling the basic properties of HOME and HOPE.
\begin{fact}[HOME and HOPE properties]\label{fact:level-bound+locally uniform}\cite{Kabgani24itsopt}
Let $p>1$ and let $\psi:\E\to\Rinf$ be proper, lsc, and high-order prox-bounded with threshold $\gamma^{\psi,p}>0$. Then, for each $\gamma\in(0,\gamma^{\psi,p})$, the function
$(y,x)\mapsto \psi(y)+\frac{1}{p\gamma}\|y-x\|^p$
is lsc and level-bounded in $y$ locally uniformly in $x$; see \cite[Definition~1.16]{Rockafellar09}. For every $x\in\E$, the set $\prox{\psi}{\gamma}{p}(x)$
is nonempty and compact, and $\fgam{\psi}{p}{\gamma}$ is finite-valued.
The mapping $(x,\gamma)\mapsto\fgam{\psi}{p}{\gamma}(x)$ is continuous on $\E\times(0,\gamma^{\psi,p})$. If $x^k\to x$, $\gamma_k\to\gamma\in(0,\gamma^{\psi,p})$, and $y^k\in\prox{\psi}{\gamma_k}{p}(x^k)$, 
then $\{y^k\}_{k\in\Nz}$ is bounded and every cluster point belongs to $\prox{\psi}{\gamma}{p}(x)$.
\end{fact}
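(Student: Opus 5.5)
This fact is quoted from \cite{Kabgani24itsopt}, but I would prove it as follows. The argument rests on the parametric minimization theorem \cite[Theorem~1.17]{Rockafellar09}: for $h(y,x)$ proper, lsc, and level-bounded in $y$ locally uniformly in $x$, the infimum is finite, the argmin is nonempty and compact, and continuity follows when an additional regularity condition holds. So the main work is to establish the uniform level-boundedness.

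\textbf{Step 1: lsc and properness.} Take $h(y,x):=\psi(y)+\frac{1}{p\gamma}\|y-x\|^p$. It is lsc because it is the sum of the lsc function $\psi$ and a continuous function. It is proper because $\psi$ is proper.

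\textbf{Step 2: uniform level-boundedness.} Fix $\gamma<\gamma'<\gamma^{\psi,p}$. By definition of the threshold, and by monotonicity of HOME in $\gamma$, there is some $\bar x$ with $\fgam{\psi}{p}{\gamma'}(\bar x)=:m>-\infty$. This gives
\[
\psi(y)\ge m-\tfrac{1}{p\gamma'}\|y-\bar x\|^p .
\]
Hence, for $x$ in a bounded set $X$,
\[
h(y,x)\ge m+\tfrac{1}{p\gamma}\|y-x\|^p-\tfrac{1}{p\gamma'}\|y-\bar x\|^p .
\]
As $\|y\|\to\infty$, the ratio $\|y-x\|/\|y-\bar x\|\to 1$ uniformly over $x\in X$. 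Because $1/\gamma>1/\gamma'$, the right-hand side grows like $c\|y\|^p$ with $c>0$, uniformly in $x\in X$. Therefore the set $\{(y,x): x\in X,\ h(y,x)\le\alpha\}$ has bounded $y$-projection.

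I expect this to be the main obstacle. The key point is choosing $\gamma'$ strictly between $\gamma$ and the threshold, so that the two $p$-th power terms do not cancel. For $p\ne 2$ there is no exact expansion of the norms, so I would bound the difference using $\|y-x\|\ge\|y-\bar x\|-\|x-\bar x\|$ together with convexity of $t\mapsto t^p$.

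\textbf{Step 3: apply the parametric minimization theorem.} Applying \cite[Theorem~1.17]{Rockafellar09} gives nonemptiness and compactness of $\prox{\psi}{\gamma}{p}(x)$ and finiteness of the envelope. It also gives that $\fgam{\psi}{p}{\gamma}$ is lsc.

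\textbf{Step 4: joint continuity and cluster points.} To get joint continuity in $(x,\gamma)$, I would treat $\gamma$ as part of the parameter and restrict to a compact $\gamma$-interval inside $(0,\gamma^{\psi,p})$; Step 2 still holds uniformly there. Upper semicontinuity follows because the envelope is an infimum of the functions $(x,\gamma)\mapsto\psi(y)+\frac{1}{p\gamma}\|y-x\|^p$, each continuous in $(x,\gamma)$ for fixed $y\in\dom\psi$. Together with lsc this gives continuity. The bounded-sequence and cluster-point claim then follows from \cite[Theorem~1.17(c)]{Rockafellar09}, using lsc and the convergence of the envelope values.
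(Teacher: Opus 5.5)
The paper does not prove this fact; it imports it from \cite{Kabgani24itsopt}. Your argument is correct and follows the standard route, namely the power-$p$ analogue of the proof of \cite[Theorem~1.25]{Rockafellar09} for the classical Moreau envelope: a lower bound $\psi(y)\ge m-\frac{1}{p\gamma'}\|y-\bar x\|^p$ at some $\gamma'\in(\gamma,\gamma^{\psi,p})$, then level-boundedness in $y$ locally uniformly in $(x,\gamma)$ on compact $\gamma$-intervals, then \cite[Theorem~1.17]{Rockafellar09}. Two small fixes: the boundedness and cluster-point conclusion is part~(b) of that theorem, not part~(c); and finiteness of $\fgam{\psi}{p}{\gamma}$ also uses $\fgam{\psi}{p}{\gamma}(x)\le\psi(y_0)+\frac{1}{p\gamma}\|y_0-x\|^p<+\infty$ for a fixed $y_0\in\dom{\psi}$, so the value function has domain all of $\E$.
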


\section{Tensor majorant meets high-order regularization} \label{sec:ten_maj} 

High-order optimization methods commonly combine Taylor approximations with power regularization to construct tractable models of the objective; see, e.g., \cite{martinez2017high,NecoaraLupu2025}. In majorization-based methods, the model is required to upper-bound the objective while agreeing
with it at the current point, so that minimizing the model naturally yields descent.

For $\gf=f+g$, we keep the nonsmooth term $g$ unchanged. Since the Taylor model is exact at the base point, majorization reduces to controlling the Taylor remainder of $f$. Standard H\"older assumptions bound the absolute Taylor error $\left|f(y)-\Omega^f_{x,q}(y)\right|$, whereas, for
majorization, it suffices to control the amount by which $f(y)$ exceeds its Taylor model, namely,
\[
\pos{f(y)-\Omega^f_{x,q}(y)}.
\]
The purpose of this section is to characterize when this quantity admits a global bound of order $\norm{y-x}^p$ and to understand how this property depends on the Taylor order $q$ and the regularization power $p$. In particular, we distinguish the regimes $q<p<q+1$, $p=q+1$, and $p>q+1$,
which exhibit substantially different behavior. Together, they provide the majorization framework for the tensor operator and envelope studied in Section~\ref{sec:Ten_env}.

\begin{definition}[tensor descent lemma]\label{def:maj}
Let $q\geq 1$ be an integer, $p>1$, and $L\geq 0$. A function $f\in C^q(\E)$ belongs to $\maj{q}(p,L)$ if
\begin{equation}\label{eq:maj_ineq}
    f(y)\le \Omega^f_{x,q}(y)+\frac{L}{p}\|y-x\|^p \quad \forall x,y\in\E,
\end{equation}
which is also called {\it $(p,q)$-tensor majorant}. The smallest admissible majorization constant is
\begin{equation*}
L_{p,q}(f):=\supt{x\ne y}\frac{p\bigl(f(y)-\Omega^f_{x,q}(y)\bigr)_+}{\|y-x\|^p}\in[0,+\infty],
\end{equation*}
i.e., $f\in\maj{q}(p,L)$ if and only if $L_{p,q}(f)\le L$.
\end{definition}

For $q=1$, Definition~\ref{def:maj} reduces to the first-order high-order majorant studied in \cite{kabgani2025fb}.
Although Definition~\ref{def:maj} allows $p\le q$, we do not study this range further. Indeed, for every $f\in C^q(\E)$ and fixed $x\in\E$, $f(x+h)-\Omega^f_{x,q}(x+h)=o(\|h\|^q)$, and consequently, if $p\le q$, we have
\[
\frac{\pos{f(x+h)-\Omega^f_{x,q}(x+h)}}{\|h\|^p}\to 0,
\quad\text{as }h\to 0.
\]
Hence, in this range the local Taylor behavior does not determine whether $L_{p,q}(f)$ is finite. We therefore focus on $p>q$, where comparison with the first omitted Taylor order $q+1$ leads to the three distinct regimes $q<p<q+1$, $p=q+1$, and $p>q+1$.

We repeatedly use the following integral form of the Taylor remainder.
\begin{fact}[Integral Taylor remainder]\label{lem:dm_remainder}
If $f\in C^q(\E)$, for every $x,h\in\E$, then
\begin{equation}\label{eq:integral_remainder}
f(x+h)-\Omega_{x,q}^f(x+h) = \frac{1}{(q-1)!}\int_0^1(1-t)^{q-1} \big(D^qf(x+th)-D^qf(x)\big)[h]^q\,dt.
\end{equation}
\end{fact}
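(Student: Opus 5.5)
Set $\phi(t):=f(x+th)$ for $t\in[0,1]$. Since $f\in C^q(\E)$, the chain rule applied repeatedly gives $\phi\in C^q([0,1])$ with $\phi^{(j)}(t)=D^jf(x+th)[h]^j$ for $j=1,\dots,q$. The claim is therefore a one-dimensional statement about $\phi$. Indeed, $\Omega^f_{x,q}(x+h)=\sum_{j=0}^q\frac{1}{j!}\phi^{(j)}(0)$, so the left-hand side of \eqref{eq:integral_remainder} is $\phi(1)-\sum_{j=0}^q\frac{1}{j!}\phi^{(j)}(0)$.

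The plan is to use the classical Taylor formula with integral remainder of order $q-1$, not order $q$, because only $q$ derivatives are available. It reads
\[
\phi(1)=\sum_{j=0}^{q-1}\frac{1}{j!}\phi^{(j)}(0)+\frac{1}{(q-1)!}\int_0^1(1-t)^{q-1}\phi^{(q)}(t)\,dt .
\]
This is proved by induction on $q$. The base case $q=1$ is the fundamental theorem of calculus. The inductive step is one integration by parts, which requires $\phi^{(q)}$ to be continuous, and that holds here. Alternatively, we may simply quote this formula as standard.

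The last step is to subtract the missing $q$th Taylor term $\frac{1}{q!}\phi^{(q)}(0)$. Using $\int_0^1(1-t)^{q-1}\,dt=\frac1q$ (the case $a=1$, $b=q$ of \eqref{eq:beta-gamma}), we can write
\[
\frac{1}{q!}\phi^{(q)}(0)=\frac{1}{(q-1)!}\int_0^1(1-t)^{q-1}\phi^{(q)}(0)\,dt .
\]
Subtracting this from the formula above yields $\frac{1}{(q-1)!}\int_0^1(1-t)^{q-1}\bigl(\phi^{(q)}(t)-\phi^{(q)}(0)\bigr)\,dt$. Substituting $\phi^{(q)}(t)=D^qf(x+th)[h]^q$ then gives exactly \eqref{eq:integral_remainder}.

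There is no real obstacle here. The only point needing care is to justify the derivative identity $\phi^{(j)}(t)=D^jf(x+th)[h]^j$ and the integration by parts using only $C^q$ regularity, without assuming $C^{q+1}$.
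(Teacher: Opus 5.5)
Your proof is correct. The reduction to $\phi(t)=f(x+th)$, the order-$(q-1)$ Taylor formula with integral remainder (valid since $\phi\in C^q$), and the subtraction of $\frac{1}{q!}\phi^{(q)}(0)=\frac{1}{(q-1)!}\int_0^1(1-t)^{q-1}\phi^{(q)}(0)\,dt$ together form the standard derivation; the paper records this as a Fact without proof, and your argument supplies the justification it takes for granted.
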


The classical H\"older assumption on the highest Taylor derivative provides a sufficient condition for membership in the tensor-majorant class. In particular, \cite[Lemma~2.1]{CartisGouldToint2020Sharp}
gives the following implication.

\begin{fact}[H\"older continuity and tensor majorant]\label{lem:holder_sufficient}
Let $\nu\in(0,1]$ and suppose that $D^qf$ is globally $\nu$-H\"older continuous with constant $H\geq 0$. Then
$f\in\maj{q}(q+\nu,L)$ with $L=H\Gamma(\nu+1)/\Gamma(q+\nu)$.
In particular, if $D^q f$ is globally Lipschitz with constant $H$, then $f\in\maj{q}(q+1,H/q!)$.
\end{fact}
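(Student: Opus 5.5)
We derive the statement directly from the integral Taylor remainder in Fact~\ref{lem:dm_remainder}. Fix $x,h\in\E$ and set $y=x+h$. By \eqref{eq:integral_remainder},
\[
f(y)-\Omega^f_{x,q}(y)=\frac{1}{(q-1)!}\int_0^1(1-t)^{q-1}\bigl(D^qf(x+th)-D^qf(x)\bigr)[h]^q\,dt .
\]
Since the multilinear form is evaluated on $q$ copies of $h$, the definition of the operator norm gives
\[
\bigl|\bigl(D^qf(x+th)-D^qf(x)\bigr)[h]^q\bigr|\le \norm{D^qf(x+th)-D^qf(x)}\,\norm{h}^q .
\]
The global $\nu$-H\"older continuity of $D^qf$ then bounds the right-hand side by $H t^\nu\norm{h}^{q+\nu}$.

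Substituting this bound into the integral yields
\[
f(y)-\Omega^f_{x,q}(y)\le \frac{H\norm{h}^{q+\nu}}{(q-1)!}\int_0^1 t^{\nu}(1-t)^{q-1}\,dt .
\]
The beta--gamma identity \eqref{eq:beta-gamma} with $a=\nu+1$ and $b=q$ evaluates the integral as $\Gamma(\nu+1)\Gamma(q)/\Gamma(q+\nu+1)$. Using $(q-1)!=\Gamma(q)$, the constant becomes $H\Gamma(\nu+1)/\Gamma(q+\nu+1)$.

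To match the form $\frac{L}{p}\norm{h}^p$ with $p=q+\nu$, we use $\Gamma(q+\nu+1)=(q+\nu)\Gamma(q+\nu)$, so
\[
\frac{H\Gamma(\nu+1)}{\Gamma(q+\nu+1)}=\frac{1}{q+\nu}\cdot\frac{H\Gamma(\nu+1)}{\Gamma(q+\nu)} .
\]
This is exactly \eqref{eq:maj_ineq} with $L=H\Gamma(\nu+1)/\Gamma(q+\nu)$. Since $x$ and $h$ were arbitrary, $f\in\maj{q}(q+\nu,L)$.

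For the Lipschitz case $\nu=1$, we have $\Gamma(2)=1$ and $\Gamma(q+1)=q!$, which gives $L=H/q!$.

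No step is a real obstacle. The only points needing care are, first, bounding the symmetric form evaluated on the repeated argument $[h]^q$ by the operator norm times $\norm{h}^q$, which is immediate from the definition. Second, the Gamma-function bookkeeping must produce the factor $1/p$ in \eqref{eq:maj_ineq}.
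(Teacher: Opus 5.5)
Your proof is correct and follows essentially the same route as the paper. The paper cites this fact from Cartis--Gould--Toint rather than proving it, but its proof of Theorem~\ref{thm:fractional-characterization}\ref{thm:fractional-characterization:a} runs exactly your computation: the integral Taylor remainder, the bound $H t^\nu\norm{h}^{q+\nu}$ on the integrand, and the beta--gamma identity, with the one-sided modulus $H^+_{q,\nu}(f)\le H$ in place of $H$.
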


Note that the global H\"older continuity of $D^qf$ is sufficient for membership in the tensor-majorant class, but, as the examples below show, it is not necessary.
Next, we record elementary calculus rules for the class.
\begin{proposition}[Basic calculus rules]\label{prop:basic}
Let $q\ge1$ and $p>1$. Then,
\begin{enumerate}[label=(\textbf{\alph*}), font=\normalfont\bfseries, leftmargin=0.7cm]
\item\label{prop:basic:a} if $a\ge0$ and $f\in\maj{q}(p,L)$, then $af\in\maj{q}(p,aL)$;
\item\label{prop:basic:b} if $f_i\in\maj{q}(p,L_i)$ for $i=1,\ldots,N$, then
$\sum_{i=1}^Nf_i\in\maj{q}\left(p,\sum_{i=1}^NL_i\right)$;

\item\label{prop:basic:c} if $P$ is a polynomial of degree at most $q$, then $f\in\maj{q}(p,L)$ if and only if 
$f+P\in\maj{q}(p,L)$. In particular, $L_{p,q}(f+P)=L_{p,q}(f)$;
\item\label{prop:basic:d} let $\E_0$ be a finite-dimensional Euclidean space, let $A:\E_0\to\E$ be linear, and let $b\in\E$. If $f\in\maj{q}(p,L)$, then $u\mapsto f(Au+b)$ belongs to $\maj{q}(p,L\|A\|^p)$ on $\E_0$.
\end{enumerate}
\end{proposition}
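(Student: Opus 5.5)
All four parts follow directly from Definition~\ref{def:maj}: inequality \eqref{eq:maj_ineq} has to be transported through each operation. The main tool is that the Taylor-polynomial map $f\mapsto\Omega^f_{x,q}$ is linear, commutes with affine changes of variables, and fixes polynomials of degree at most $q$. No remainder formula such as Fact~\ref{lem:dm_remainder} is needed.

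For \ref{prop:basic:a}, multiply \eqref{eq:maj_ineq} by $a\ge0$ and use $\Omega^{af}_{x,q}=a\,\Omega^f_{x,q}$. For \ref{prop:basic:b}, sum the $N$ inequalities and use $\Omega^{\sum f_i}_{x,q}=\sum_i\Omega^{f_i}_{x,q}$, which holds because $D^j$ is linear for each $j\le q$.

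For \ref{prop:basic:c}, let $P$ have degree at most $q$. Then $D^{j}P\equiv0$ for $j>q$, so the Taylor expansion of $P$ of order $q$ is exact: $\Omega^P_{x,q}(y)=P(y)$ for all $x,y$ (a finite Taylor formula for polynomials). By linearity,
\[
(f+P)(y)-\Omega^{f+P}_{x,q}(y)=f(y)-\Omega^f_{x,q}(y).
\]
The quotients defining $L_{p,q}$ therefore coincide pointwise, which gives $L_{p,q}(f+P)=L_{p,q}(f)$. The stated equivalence then follows from the characterization $f\in\maj{q}(p,L)\iff L_{p,q}(f)\le L$. The converse direction could also be obtained by applying the forward direction with $-P$.

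For \ref{prop:basic:d}, set $h(u):=f(Au+b)$. The chain rule for affine maps gives $D^jh(u)[w]^j=D^jf(Au+b)[Aw]^j$. Hence
\[
\Omega^h_{u,q}(v)=\Omega^f_{Au+b,q}(Av+b).
\]
Apply \eqref{eq:maj_ineq} with $x=Au+b$ and $y=Av+b$:
\[
h(v)\le\Omega^h_{u,q}(v)+\frac{L}{p}\|A(v-u)\|^p\le\Omega^h_{u,q}(v)+\frac{L\|A\|^p}{p}\|v-u\|^p .
\]
The last step uses the operator norm of $A$ with respect to the Euclidean norm on $\E_0$ and $\|\cdot\|$ on $\E$.

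The only point requiring care is the identity $\Omega^P_{x,q}=P$ in \ref{prop:basic:c} and the derivative formula for the composition in \ref{prop:basic:d}. Both are standard, so I expect no genuine obstacle.
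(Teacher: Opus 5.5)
Your proposal is correct and is the intended argument: the paper records this proposition without a written proof, treating it as immediate from Definition~\ref{def:maj}. Your ingredients are linearity of $f\mapsto\Omega^f_{x,q}$, exactness $\Omega^P_{x,q}=P$ for $\deg P\le q$ (so the remainders of $f$ and $f+P$ coincide), and the affine identity $\Omega^{f(A\cdot+b)}_{u,q}(v)=\Omega^f_{Au+b,q}(Av+b)$ combined with $\|A(v-u)\|\le\|A\|\,\|v-u\|$ and $L\ge0$; the only point you leave implicit is that each operation preserves the $C^q$ regularity required by the definition, which is immediate.
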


\subsection{Structure of the tensor-majorant class}
In this subsection, we analyze how the tensor-majorant condition depends on the relation between the Taylor order $q$ and the regularization power $p$.

\paragraph{The case $q<p\leq q+1$}

Let $\nu\in(0,1]$, so that $p=q+\nu$.
For $f\in C^q(\E)$, define
\[
H^+_{q,\nu}(f):= \supt{x\ne y}\frac{\big((D^qf(y)-D^qf(x))[y-x]^q\big)_+}{\|y-x\|^{q+\nu}}.
\]
The following result characterizes the tensor-majorant class when $q<p\leq q+1$.
\begin{theorem}[One-sided remainder characterization: $q<p\leq q+1$]
\label{thm:fractional-characterization}
Let $\nu\in(0,1]$ and $f\in C^q(\E)$. The following assertions hold.
\begin{enumerate}[label=(\textbf{\alph*}), font=\normalfont\bfseries, leftmargin=0.7cm]
\item \label{thm:fractional-characterization:a} If $H_{q,\nu}^+(f)<+\infty$, then
\begin{equation}\label{eq:th:fra-char}
f\in \maj{q}\left(q+\nu,\frac{\Gamma(\nu+1)}{\Gamma(q+\nu)}H_{q,\nu}^+(f)\right);
\end{equation}

\item \label{thm:fractional-characterization:b} If $f\in\maj{q}(q+\nu,L)$ for some $L<+\infty$, then $H_{q,\nu}^+(f)\leq\frac{q!}{\nu}L$.

\item \label{thm:fractional-characterization:c} $H_{q,\nu}^+(f)<+\infty$ if and only if $f\in\maj{q}(q+\nu,L)$ for some $L<+\infty$. Moreover,
\begin{equation}\label{eq:fractional-equivalence}
\frac{\nu}{q!}H_{q,\nu}^+(f)\leq L_{q+\nu,q}(f)\leq \frac{\Gamma(\nu+1)}{\Gamma(q+\nu)}H_{q,\nu}^+(f),
\end{equation}
where at the endpoint $\nu=1$, $L_{q+1,q}(f) = \frac{1}{q!}H_{q,1}^+(f)$.

\item \label{thm:fractional-characterization:d}
If, in addition, $f\in C^{q+1}(\E)$, then
\begin{equation}\label{eq:endpoint-pointwise}
H_{q,1}^+(f)=\supt{\substack{x\in\E\\\norm{h}=1}}\pos{D^{q+1}f(x)[h]^{q+1}}.
\end{equation}
Moreover, $f\in\maj{q}(q+1,L)$ if and only if
\begin{equation}\label{eq:endpoint-characterization}
    D^{q+1}f(x)[h]^{q+1} \leq q!L\norm{h}^{q+1} \quad \forall x,h\in\E.
\end{equation}
\end{enumerate}
\end{theorem}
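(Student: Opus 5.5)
For \ref{thm:fractional-characterization:a}, I would start from the integral remainder in Fact~\ref{lem:dm_remainder} with $h=y-x$. Apply the definition of $H^+_{q,\nu}(f)$ to the pair $(x,x+th)$, $t\in(0,1]$. Since $(D^qf(x+th)-D^qf(x))[th]^q=t^q(D^qf(x+th)-D^qf(x))[h]^q$, this gives
\[
\big(D^qf(x+th)-D^qf(x)\big)[h]^q\le H^+_{q,\nu}(f)\,t^{\nu}\|h\|^{q+\nu}.
\]
Insert this into \eqref{eq:integral_remainder}, using $(1-t)^{q-1}\ge0$. The beta--gamma identity \eqref{eq:beta-gamma} with $a=\nu+1$, $b=q$ then yields
\[
f(y)-\Omega^f_{x,q}(y)\le \frac{H^+_{q,\nu}(f)}{(q-1)!}\cdot\frac{\Gamma(\nu+1)\Gamma(q)}{\Gamma(q+\nu+1)}\|h\|^{q+\nu}.
\]
Because $\Gamma(q+\nu+1)=(q+\nu)\Gamma(q+\nu)$, the right-hand side equals $\frac{1}{q+\nu}\cdot\frac{\Gamma(\nu+1)}{\Gamma(q+\nu)}H^+_{q,\nu}(f)\|h\|^{q+\nu}$, which is \eqref{eq:th:fra-char}.

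For \ref{thm:fractional-characterization:b}, the natural route is symmetrization in the spirit of the descent-lemma converse. Write \eqref{eq:maj_ineq} at $(x,y)$ and at $(y,x)$ and add the two inequalities. The terms of order $\le q-1$ do not cancel, so instead I would work along the line. Set $\phi(t)=f(x+th)$. Since $f\in\maj{q}(q+\nu,L)$, $\phi\in\maj{q}(q+\nu,L\|h\|^{q+\nu})$ on $\R$ by Proposition~\ref{prop:basic}\ref{prop:basic:d}. Now $(D^qf(y)-D^qf(x))[h]^q=\phi^{(q)}(1)-\phi^{(q)}(0)$, so the claim reduces to the one-dimensional statement $\phi^{(q)}(s)-\phi^{(q)}(0)\le \frac{q!}{\nu}L s^{\nu}$ for $s>0$ and $\phi\in\maj{q}(q+\nu,L)$ on $\R$, with $\|h\|=1$ after rescaling.

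To prove the one-dimensional statement I would use finite-difference extraction. Write $\phi(s+\tau)=\Omega_{s,q}(s+\tau)+R$ with $R\le \frac{L}{q+\nu}|\tau|^{q+\nu}$. Expanding around $0$ instead gives a Taylor model that differs by a polynomial in $\tau$ whose top ($\tau^q$) coefficient is $(\phi^{(q)}(s)-\phi^{(q)}(0))/q!$ plus Taylor-error terms of lower order. A cleaner device is to take $\tau\to0$ in a divided-difference combination, or to combine majorization at base point $0$ evaluated at $s$ with the integral remainder. Concretely, I would apply \eqref{eq:integral_remainder} to $g(t):=\phi^{(q)}(t)-\phi^{(q)}(0)$ and argue by contradiction. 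If $g(s)>\frac{q!}{\nu}Ls^\nu$ at some $s$, choose the point where the excess is attained and use majorization with base point $0$ at $y=s$ together with the fact that $\int_0^1(1-t)^{q-1}$ weights concentrate near $t=0$ as $s$ is varied. The constant $q!/\nu$ suggests the intended argument compares $\int_0^s(s-t)^{q-1}g(t)\,dt$ with $s^{q+\nu}$ and differentiates $q$ times in $s$. The $q$-fold derivative of $\frac{L}{q+\nu}s^{q+\nu}\cdot(q-1)!$ gives $L(q-1)!\,\Gamma(q+\nu)/\Gamma(\nu+1)\cdot s^\nu$, which is not obviously $\frac{q!}{\nu}$. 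This step is the main obstacle: passing from an integrated inequality to a pointwise one without monotonicity. I would first try a base-point-shift argument, using majorization from base point $x$ and from base point $y$ evaluated at each other and at midpoints with Taylor expansions of order $q$, to isolate $D^q$ differences with a factor $1/\nu$ arising from $\int_0^1 t^{\nu-1}dt$.

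Part \ref{thm:fractional-characterization:c} follows by combining (a) and (b), taking the infimum over admissible $L$, which is $L_{q+\nu,q}(f)$. At $\nu=1$ both constants equal $1/q!$, since $\Gamma(2)/\Gamma(q+1)=1/q!$. For \ref{thm:fractional-characterization:d}, the bound ``$\ge$'' follows by taking $y=x+\epsilon h$ and letting $\epsilon\downarrow0$, since $(D^qf(x+\epsilon h)-D^qf(x))[h]^q/\epsilon\to D^{q+1}f(x)[h]^{q+1}$. The bound ``$\le$'' follows from the mean value theorem applied to $t\mapsto D^qf(x+th)[h]^q$. The equivalence \eqref{eq:endpoint-characterization} then follows from (c) at $\nu=1$ together with homogeneity.
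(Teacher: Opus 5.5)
Parts (a), (c) and (d) follow the paper's proof, and your reduction of (b) to a line via Proposition~\ref{prop:basic}\ref{prop:basic:d} is fine. The genuine gap is (b) itself. You correctly see that one must pass from an integrated majorization inequality to a pointwise one-sided H\"older bound on $D^qf$, but none of your candidate devices does this:
\begin{itemize}
\item Divided differences with $\tau\to0$ would require $D^{q+1}f$, which need not exist for $f\in C^q(\E)$.
\item An integrated inequality cannot be differentiated $q$ times, which is why your constant came out wrong.
\item Your contradiction sketch keeps the base point fixed at $0$. There, maximizing $a(s)-a(0)-\lambda s^\nu$ over $s$ only gives upper bounds $a(\theta s)-a(0)\le a(s)-a(0)-\lambda s^\nu(1-\theta^\nu)$. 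That is the wrong direction to contradict \eqref{eq:maj_ineq}, which also only bounds these increments from above.
\end{itemize}
Your (c) at $\nu=1$ and the ``only if'' half of (d) are derived from (b), so they inherit the gap. For (d) alone there is a direct fix using $f\in C^{q+1}(\E)$: divide $f(x+th)-\Omega^f_{x,q}(x+th)\le\frac{L}{q+1}t^{q+1}\|h\|^{q+1}$ by $t^{q+1}$ and let $t\downarrow0$.

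The missing idea is to let the base point move along the line and to optimize over it. Put $a(t):=D^qf(z+tv)[v]^q$ with $\|v\|=1$. Applying \eqref{eq:maj_ineq} at $x=z+sv$, $y=z+tv$ together with \eqref{eq:integral_remainder} gives, for every $s<t$ and $r:=t-s$,
\[
\int_0^1(1-\theta)^{q-1}\bigl(a(s+\theta r)-a(s)\bigr)\,d\theta\le\frac{(q-1)!\,L}{q+\nu}\,r^\nu .
\]
Suppose $a(t)-a(s)>\lambda(t-s)^\nu$ for some $\lambda>q!L/\nu$. Maximize the continuous function $\Psi(u,w):=a(w)-a(u)-\lambda(w-u)^\nu$ over $\{s\le u\le w\le t\}$; fixing $w=t$ and maximizing in the base point $u$ alone already works. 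At a maximizer $(\bar u,\bar w)$ one has $\rho:=\bar w-\bar u>0$. Comparing with $(\bar u+\theta\rho,\bar w)$ gives the lower bound $a(\bar u+\theta\rho)-a(\bar u)\ge\lambda\rho^\nu\bigl(1-(1-\theta)^\nu\bigr)$ for all $\theta\in[0,1]$. Insert this into the displayed inequality with base point $\bar u$ and use $\int_0^1(1-\theta)^{q-1}\bigl(1-(1-\theta)^\nu\bigr)\,d\theta=\frac{\nu}{q(q+\nu)}$. This yields $\lambda\le q!L/\nu$, a contradiction. The comparison function $1-(1-\theta)^\nu$ is where the constant $q!/\nu$ comes from, not $\int_0^1 t^{\nu-1}\,dt$. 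Only continuity of $a$ is used, so $f\in C^q(\E)$ suffices.
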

\begin{proof}
\ref{thm:fractional-characterization:a} Fix $x\in\E$, $h\neq 0$, and $t\in(0,1]$. By the definition of $H_{q,\nu}^+(f)$, we get
\[
\pos{\big(D^qf(x+th)-D^qf(x)\big)[h]^q}\leq H_{q,\nu}^+(f)t^\nu\norm{h}^{q+\nu}.
\]
Together with the integral Taylor remainder formula, this implies
\begin{align*}
\pos{f(x+h)-\Omega^f_{x,q}(x+h)}
&\leq \frac{H_{q,\nu}^+(f)}{(q-1)!}\int_0^1(1-t)^{q-1}t^\nu\,dt\, \norm{h}^{q+\nu}
\\
&=\frac{\Gamma(\nu+1)}{\Gamma(q+\nu+1)} H_{q,\nu}^+(f)\norm{h}^{q+\nu},
\end{align*}
which is \eqref{eq:th:fra-char}.
\\
\ref{thm:fractional-characterization:b}
Let $f\in\maj{q}(q+\nu,L)$ for some $L<+\infty$. Fix $z\in\E$ and $v\in\E$ with $\norm{v}=1$, and define
$a(t):=D^qf(z+tv)[v]^q$.
Since $f\in C^q(\E)$, $a$ is continuous. Let $s<t$ and set $r:=t-s$. Applying \eqref{eq:maj_ineq} with $x=z+sv$ and $y=z+tv$, and using \eqref{eq:integral_remainder} imply
\begin{equation}\label{eq:weighted-increment}
\int_0^1(1-\theta)^{q-1}\big(a(s+\theta r)-a(s)\big)\,d\theta \leq \frac{(q-1)!L}{q+\nu}r^\nu.
\end{equation}
We now claim that, for $s<t$,
\begin{equation}\label{eq:one-sided-holder-line}
a(t)-a(s) \leq \frac{q!}{\nu}L(t-s)^\nu.
\end{equation}
Suppose, by contradiction, that this fails. Consequently, there exist $s<t$ and $\lambda>q!L/\nu$ such that $a(t)-a(s)>\lambda(t-s)^\nu$. Let us define
$\Psi(u,w) := a(w)-a(u)-\lambda(w-u)^\nu$
on the compact set $\Delta:=\{(u,w):s\leq u\leq w\leq t\}$. Since $\Psi(s,t)>0$, $\Psi$ attains a positive maximum at some $(\ov u,\ov w)\in\Delta$. Necessarily $\ov u<\ov w$.
Set $\rho:=\ov w-\ov u>0$. For every $\theta\in[0,1]$, the maximality of $(\ov u,\ov w)$ ensures
$\Psi(\ov u+\theta\rho,\ov w) \leq \Psi(\ov u,\ov w)$,
consequently, $a(\ov u+\theta\rho)-a(\ov u) \geq \lambda\rho^\nu \big(1-(1-\theta)^\nu\big)$.
Applying \eqref{eq:weighted-increment} to the interval $[\ov u,\ov w]$ yields
\[
\lambda\rho^\nu \int_0^1(1-\theta)^{q-1}\big(1-(1-\theta)^\nu\big)\,d\theta \leq \frac{(q-1)!L}{q+\nu}\rho^\nu.
\]
Since $\int_0^1(1-\theta)^{q-1}\big(1-(1-\theta)^\nu\big)\,d\theta = \frac1q-\frac1{q+\nu} = \frac{\nu}{q(q+\nu)}$, we have $\lambda \leq \frac{q!}{\nu}L$, contradicting the choice of $\lambda$, i.e., \eqref{eq:one-sided-holder-line} holds. For $x\neq y$, setting $r:=\norm{y-x}$ and $v:=\frac{y-x}{r}$ and using \eqref{eq:one-sided-holder-line} with $z=x$, $s=0$, and $t=r$, we get $\big(D^qf(y)-D^qf(x)\big)[v]^q \leq \frac{q!}{\nu}Lr^\nu$.
Multiplying by $r^q$, the positive part satisfies
\[
\pos{\big(D^qf(y)-D^qf(x)\big)[y-x]^q}\leq \frac{q!}{\nu}L\norm{y-x}^{q+\nu}.
\]
Taking the supremum over $x\neq y$ proves $H_{q,\nu}^+(f)\leq \frac{q!}{\nu}L$.
\\
\ref{thm:fractional-characterization:c} The equivalence follows from Assertions~\ref{thm:fractional-characterization:a} and~\ref{thm:fractional-characterization:b}.
Assertion~\ref{thm:fractional-characterization:a} and the definition of $L_{q+\nu,q}(f)$ produce
$L_{q+\nu,q}(f) \leq \frac{\Gamma(\nu+1)}{\Gamma(q+\nu)} H_{q,\nu}^+(f)$.
On the other hand, if $L_{q+\nu,q}(f)<+\infty$, then $f\in\maj{q}(q+\nu,L_{q+\nu,q}(f))$, and Assertion~\ref{thm:fractional-characterization:b} gives
$H_{q,\nu}^+(f) \leq \frac{q!}{\nu}L_{q+\nu,q}(f)$.
Together, these inequalities prove \eqref{eq:fractional-equivalence}. For $\nu=1$, the two bounds in \eqref{eq:fractional-equivalence} coincide because $\Gamma(2)=1$ and $\Gamma(q+1)=q!$. Hence
$L_{q+1,q}(f) = \frac{1}{q!}H_{q,1}^+(f)$.
\\
\ref{thm:fractional-characterization:d}
Suppose now that $f\in C^{q+1}(\E)$, and set
\[
S:=\supt{\substack{x\in\E\\\norm{h}=1}}\pos{D^{q+1}f(x)[h]^{q+1}}.
\]
For $x\neq y$, let $d:=y-x$. By the fundamental theorem of calculus, it is clear that
\[
\big(D^qf(y)-D^qf(x)\big)[d]^q=\int_0^1 D^{q+1}f(x+td)[d]^{q+1}\,dt.
\]
i.e.,
$\pos{\big(D^qf(y)-D^qf(x)\big)[d]^q} \leq S\norm{d}^{q+1}$,
and hence $H_{q,1}^+(f)\leq S$.
Conversely, let us fix $x\in\E$ and $\norm{h}=1$. For $t>0$, the definition of $H_{q,1}^+(f)$ gives
\[
\frac{\pos{\big(D^qf(x+th)-D^qf(x)\big)[h]^q}}{t}\leq H_{q,1}^+(f).
\]
Letting $t\downarrow0$ yields
$\pos{D^{q+1}f(x)[h]^{q+1}} \leq H_{q,1}^+(f)$.
Taking the supremum over $x\in\E$ and $\norm{h}=1$ gives $S\leq H_{q,1}^+(f)$, i.e.,
$H_{q,1}^+(f)=S$, which proves \eqref{eq:endpoint-pointwise}. Combining this identity with $L_{q+1,q}(f) = \frac{1}{q!}H_{q,1}^+(f)$ and Definition~\ref{def:maj} gives
\eqref{eq:endpoint-characterization}.
\end{proof}
Indeed, for $q<p\le q+1$, the finiteness of the tensor-majorant constant is equivalent to the finiteness of a one-sided H\"older-type modulus of $D^qf$, rather than to the global H\"older continuity of $D^qf$ in the operator norm.
The next example shows that the tensor-majorant condition may hold even when $D^qf$ has no global H\"older modulus.

\begin{example}[Eighth-order Landau-type potentials]\label{ex:landau_application}
Consider the scalar eighth-order Landau-type potential
\begin{equation}\label{eq:landau_general}
\Phi(u) = P_3(u) +\frac{\beta_4}{4}u^4 +\frac{\beta_6}{6}u^6+\frac{\beta_8}{8}u^8,
\end{equation}
where $P_3$ is a polynomial of degree at most three, $\beta_4>0$, $\beta_6<0$, and $\beta_8>0$. 
Scalar eighth-order Landau models with quartic, sextic, and octic terms arise in ferroelectric and dielectric modeling. In particular, the sign pattern $\beta_4>0$, $\beta_6<0$, and $\beta_8>0$ appears in the Landau fit in \cite{Liu2025Nitride}; see also \cite{Li2005phenomeno,Chen2026BFO} for related eighth-order Landau expansions.
We divide \eqref{eq:landau_general} as
$f(u) = P_3(u)+\frac{\beta_4}{4}u^4+\frac{\beta_6}{6}u^6$ and $g(u) = \frac{\beta_8}{8}u^8$.
The function $\Phi=f+g$ is coercive. We take $q=3$, $\nu\in(0,1)$, and $p=3+\nu$. Since $D^3P_3$ is constant, $P_3$ does not affect $H_{3,\nu}^+(f)$. Moreover,
\[
f'''(x+h)-f'''(x) = h\left[6\beta_4 +20\beta_6\bigl(3x^2+3xh+h^2\bigr)\right],
\]
leading to
\[
H_{3,\nu}^+(f) = \supt{\substack{x\in\R\\h\neq0}} |h|^{1-\nu}\left(6\beta_4+20\beta_6\bigl(3x^2+3xh+h^2\bigr)\right)_+.
\]
Since $3x^2+3xh+h^2 = 3\left(x+\frac h2\right)^2+\frac14h^2$ and $\beta_6<0$, the supremum over $x$ is attained at $x=-h/2$, leading to
$H_{3,\nu}^+(f) = \sup_{s>0} s^{1-\nu}\bigl(6\beta_4+5\beta_6s^2\bigr)_+$.
The maximum is reached at $s_\nu^2 = \frac{6(1-\nu)\beta_4}{-5\beta_6(3-\nu)}$,
implying
\begin{equation}\label{eq:landau_fractional_modulus}
H_{3,\nu}^+(f) = \frac{12\beta_4}{3-\nu} \left(\frac{6(1-\nu)\beta_4}{-5\beta_6(3-\nu)}\right)^{\frac{1-\nu}{2}}<+\infty.
\end{equation}
Invoking Theorem~\ref{thm:fractional-characterization} yields
$f\in \maj{3}\left(3+\nu,\frac{\Gamma(\nu+1)}{\Gamma(3+\nu)} H_{3,\nu}^+(f)\right)$.
On the other hand,
$f'''(u)-f'''(0) = 6\beta_4u+20\beta_6u^3$. Since $\beta_6\neq0$, for every $\mu\in(0,1]$,
\[
\frac{|f'''(t)-f'''(0)|}{|t|^\mu} \to+\infty,\quad\text{as }t\to+\infty.
\]
Hence, $D^3f$ is not globally $\mu$-H\"older continuous for any $\mu\in(0,1]$, although we have $H_{3,\nu}^+(f)<+\infty$ for every $\nu\in(0,1)$.
\end{example}
At $p=q+1$, the characterization exhibits a parity-dependent structure.
\begin{theorem}[Parity structure at $p=q+1$]\label{thm:diagonal_parity}
Let $f\in C^{q+1}(\E)$.
\begin{enumerate}[label=(\textbf{\alph*}), font=\normalfont\bfseries, leftmargin=0.7cm]
\item\label{thm:diagonal_parity:even}
If $q$ is even, then $f\in\maj{q}(q+1,L)$ if and only if
\begin{equation}\label{eq:lip:thm:diagonal_parity}
    \|D^qf(y)-D^qf(x)\|\le q!L\|y-x\| \quad \forall x,y\in\E.
\end{equation}
In particular,
$L_{q+1,q}(f)=\frac1{q!}\operatorname{Lip}(D^qf)$,
where $\operatorname{Lip}(D^qf)$ denotes the optimal global Lipschitz constant of $D^qf$.

\item\label{thm:diagonal_parity:odd}
If $q$ is odd, then
\begin{equation}\label{eq:odd-best-constant}
L_{q+1,q}(f)=\frac1{q!}\supt{\substack{x\in\E\\\norm{h}=1}}\pos{D^{q+1}f(x)[h]^{q+1}}.
\end{equation}
In this case, membership in $\maj{q}(q+1,L)$ imposes no bound on the negative part of $D^{q+1}f(x)[h]^{q+1}$.
\end{enumerate}
\end{theorem}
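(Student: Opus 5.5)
Both parts reduce to Theorem~\ref{thm:fractional-characterization}\ref{thm:fractional-characterization:d}. That result gives $L_{q+1,q}(f)=\frac{1}{q!}S$, where $S:=\sup_{x,\|h\|=1}\pos{D^{q+1}f(x)[h]^{q+1}}$. It also says that $f\in\maj{q}(q+1,L)$ if and only if $D^{q+1}f(x)[h]^{q+1}\le q!L\|h\|^{q+1}$ for all $x,h$. Part~\ref{thm:diagonal_parity:odd} is therefore just a restatement of this identity.

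For the final sentence of part~\ref{thm:diagonal_parity:odd}, I will give an example. Take $\E=\R$, $q$ odd, and $f(u)=-u^{q+3}$ (or $f(u)=-\cosh u$). Then $D^{q+1}f\le0$ everywhere, so $L_{q+1,q}(f)=0$, while $-D^{q+1}f$ is unbounded. The only point to check is that $q+1$ is even, so that the sign of $D^{q+1}f(x)[h]^{q+1}$ is the sign of $f^{(q+1)}(x)$ regardless of $h$.

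For part~\ref{thm:diagonal_parity:even}, the key observation is that $q+1$ is odd when $q$ is even. The map $h\mapsto D^{q+1}f(x)[h]^{q+1}$ is then odd, so replacing $h$ by $-h$ gives
\[
\sup_{\|h\|=1}\pos{D^{q+1}f(x)[h]^{q+1}}=\sup_{\|h\|=1}\abs{D^{q+1}f(x)[h]^{q+1}}.
\]
By \eqref{eq:symmetric_tensor_norm} applied to the symmetric $(q+1)$-linear form $D^{q+1}f(x)$, the right-hand side equals $\|D^{q+1}f(x)\|$. Hence $q!\,L_{q+1,q}(f)=\sup_x\|D^{q+1}f(x)\|$.

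It remains to identify $\sup_x\|D^{q+1}f(x)\|$ with $\operatorname{Lip}(D^qf)$, where $D^qf$ takes values in the space of $q$-linear forms with the operator norm. For the inequality $\operatorname{Lip}(D^qf)\le\sup_x\|D^{q+1}f(x)\|$, I will write
\[
D^qf(y)-D^qf(x)=\int_0^1 D^{q+1}f(x+t(y-x))[y-x,\cdot,\ldots,\cdot]\,dt
\]
and bound the norm of the integrand by $\|D^{q+1}f\|\,\|y-x\|$. For the reverse inequality, the operator norm of $D^{q+1}f(x)$ as a $(q+1)$-linear form coincides with the norm of the derivative of $D^qf$ at $x$, viewed as a linear map into $q$-linear forms. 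This derivative norm is bounded by the Lipschitz constant, because
\[
\|D^{q+1}f(x)[h,\cdot]\|=\lim_{t\downarrow0}\frac{\|D^qf(x+th)-D^qf(x)\|}{t}\le\operatorname{Lip}(D^qf)\|h\|.
\]
Combining these gives $L_{q+1,q}(f)=\frac{1}{q!}\operatorname{Lip}(D^qf)$. The stated equivalence with \eqref{eq:lip:thm:diagonal_parity} follows, since $f\in\maj{q}(q+1,L)$ if and only if $L_{q+1,q}(f)\le L$.

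I expect the main obstacle to be this last identification. It requires carefully matching the multilinear operator norm of $D^{q+1}f(x)$ with the norm of the derivative of the map $x\mapsto D^qf(x)$, and it is the one place where \eqref{eq:symmetric_tensor_norm} is needed.
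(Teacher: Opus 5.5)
Your proof is correct. The forward direction and part (b) coincide with the paper's: Theorem~\ref{thm:fractional-characterization}\ref{thm:fractional-characterization:d}, the flip $h\mapsto-h$ when $q+1$ is odd, \eqref{eq:symmetric_tensor_norm}, and then the fundamental theorem of calculus.

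The real difference is the converse in part (a). The paper gets ``Lipschitz $\Rightarrow$ membership'' from Fact~\ref{lem:holder_sufficient} with $\nu=1$ (the Cartis--Gould--Toint remainder bound). It then obtains $L_{q+1,q}(f)=\frac1{q!}\operatorname{Lip}(D^qf)$ by optimizing the constant in both implications. You instead show $\|D^{q+1}f(x)\|\le\operatorname{Lip}(D^qf)$ via a difference quotient and feed this back into the pointwise characterization. Everything then becomes the single chain $q!\,L_{q+1,q}(f)=\sup_{x,\|h\|=1}\pos{D^{q+1}f(x)[h]^{q+1}}=\sup_x\|D^{q+1}f(x)\|=\operatorname{Lip}(D^qf)$ in $[0,+\infty]$, rather than two separate implications. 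Your route is self-contained within the paper's own results. The paper's converse has the advantage, unused here, of not needing $f\in C^{q+1}$.

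A minor correction to your closing remark: identifying the $(q+1)$-linear operator norm with the norm of the derivative of $x\mapsto D^qf(x)$ is just the iterated-supremum definition. It does not need \eqref{eq:symmetric_tensor_norm}; that identity is used only in the parity step. Your example $f(u)=-u^{q+3}$ correctly illustrates the last sentence of part (b).
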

\begin{proof}
Suppose first that $q$ is even and $f\in\maj{q}(q+1,L)$. By Theorem~\ref{thm:fractional-characterization}~\ref{thm:fractional-characterization:d},
\[
D^{q+1}f(x)[h]^{q+1} \leq q!L\norm{h}^{q+1} \quad \forall x,h\in\E.
\]
Since $q+1$ is odd, replacing $h$ by $-h$ implies
$-D^{q+1}f(x)[h]^{q+1} \leq q!L\norm{h}^{q+1}$. Hence,
\[
\abs{D^{q+1}f(x)[h]^{q+1}}\leq q!L\norm{h}^{q+1} \quad \forall x,h\in\E.
\]
Consequently, \eqref{eq:symmetric_tensor_norm} ensures $\|D^{q+1}f(x)\|\le q!L$, for all $x\in\E$.
For $x,y\in\E$, the fundamental theorem of calculus yields
\[
D^qf(y)-D^qf(x) = \int_0^1 D^{q+1}f(x+t(y-x))[y-x]\,dt,
\]
and therefore \eqref{eq:lip:thm:diagonal_parity} holds.
Conversely, suppose that $D^qf$ is globally Lipschitz with constant $q!L$. Fact~\ref{lem:holder_sufficient} with $\nu=1$ gives $f\in\maj{q}(q+1,L)$. Taking the smallest admissible constant in both directions yields
$L_{q+1,q}(f)=\frac1{q!}\operatorname{Lip}(D^qf)$.
Now, let us assume that $q$ is odd. Then,
$D^{q+1}f(x)[-h]^{q+1} = D^{q+1}f(x)[h]^{q+1}$.
Thus, replacing $h$ by $-h$ does not convert an upper bound into a lower bound. Formula \eqref{eq:odd-best-constant} follows directly from Theorem~\ref{thm:fractional-characterization}, while the negative
part of $D^{q+1}f(x)[h]^{q+1}$ remains unrestricted.
\end{proof}

The next example illustrates the odd-$q$ structure at the endpoint $p=q+1$. In particular, the tensor-majorant constant may be finite and positive even when $D^qf$ has no global H\"older modulus.

\begin{example}[Squared fourth-cumulant projection models]
Let $Z$ be a centered random vector in $\R^n$ with $\mathbb E[ZZ^\top]=I$ and $\mathbb E\|Z\|^4<+\infty$. Define $A(w):=\mathbb E[(w^\top Z)^4]$. For $\|w\|=1$, the fourth cumulant of $w^\top Z$ is $A(w)-3$.
Squared excess-kurtosis criteria arise naturally in projection pursuit and related statistical models; see, e.g., \cite{Loperfido2020KurtosisPP,Radojicic2021,Simon2001Squared}. Let us consider
\begin{equation}\label{eq:squared_kurtosis_general}
    \maxt{\|w\|=1}\bigl(A(w)-3\bigr)^2,
\end{equation}
or equivalently $f(w):=-\bigl(A(w)-3\bigr)^2$ and $g(w):=\delta_{\mathbb S^{n-1}}(w)$.
We take $q=3$ and $p=4$. For $x,h\in\R^n$, set $a=x^\top Z$, $b=h^\top Z$, and $m_{rs}:=\mathbb E[a^r b^s]$. We also have
\begin{equation}\label{eq:A2_fourth_derivative}
D^4(A^2)(x)[h]^4 = 48\bigl(m_{40}m_{04}+16m_{31}m_{13}+18m_{22}^2\bigr).
\end{equation}
The quantity in parentheses is nonnegative. Indeed,
\[
M:= \mathbb E \left[\begin{pmatrix}
a^2\\ ab\\ b^2
\end{pmatrix}
\begin{pmatrix}
a^2&ab&b^2
\end{pmatrix}\right] =
\begin{pmatrix}
m_{40}&m_{31}&m_{22}\\
m_{31}&m_{22}&m_{13}\\
m_{22}&m_{13}&m_{04}
\end{pmatrix}
\succeq 0.
\]
Since $f=-A^2+6A-9$ and $D^4A(x)[h]^4=24A(h)$, it can be concluded that
\[
D^4f(x)[h]^4 = 144A(h)-D^4(A^2)(x)[h]^4 \le 144A(h).
\]
At $x=0$ equality holds. Therefore, 
$L_{4,3}(f) = 24\maxt{\|h\|=1}\mathbb E[(h^\top Z)^4]$.
This constant is finite and positive. In fact, $\mathbb E[(h^\top Z)^4]\ge1$ for $\|h\|=1$.
On the other hand, for any unit vector $v$,
we have
$f(tv)=-A(v)^2t^8+6A(v)t^4-9$,
and hence
\[
D^3f(tv)[v]^3 = -336A(v)^2t^5+144A(v)t,
\]
i.e., for every $\mu\in(0,1]$,
\[
\frac{\|D^3f(tv)-D^3f(0)\|}{|t|^\mu} \ge \frac{\left|\bigl(D^3f(tv)-D^3f(0)\bigr)[v]^3\right|}{|t|^\mu} \to +\infty.
\]
As a result, $D^3f$ has no global H\"older modulus. In addition,
\[
D^4f(tv)[v]^4 = 144A(v)-1680A(v)^2t^4 \to -\infty, 
\]
showing explicitly that the negative part of the fourth-order directional derivative may be unbounded, as allowed by the odd-$q$ case of Theorem~\ref{thm:diagonal_parity}.
\end{example}
\paragraph{The case $p>q+1$}
When the regularization order exceeds the first omitted Taylor order $q+1$, the tensor-majorant class becomes considerably more restrictive. Indeed, whenever a finite majorization constant exists, the optimal one is necessarily zero.
\begin{theorem}[Remainder characterization: $p>q+1$]\label{thm:superdiagonal}
Let $f\in C^{q+1}(\E)$ and let $p>q+1$. Then the following statements are equivalent.
\begin{enumerate}[label=(\textbf{\alph*}), font=\normalfont\bfseries, leftmargin=0.7cm]
\item\label{thm:superdiagonal:a} $f\in\maj{q}(p,L)$ for some $L<+\infty$;
\item\label{thm:superdiagonal:b} $D^{q+1}f(x)[h]^{q+1}\le0$ for all $x,h\in\E$;
\item\label{thm:superdiagonal:c} $f(y)\le\Omega_{x,q}^f(y)$ for all $x,y\in\E$;
\item\label{thm:superdiagonal:d} $f\in\maj{q}(p,0)$. 
\end{enumerate}
Whenever these conditions hold, $L_{p,q}(f)=0$.
\end{theorem}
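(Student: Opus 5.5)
I will prove the cycle of implications (a)$\Rightarrow$(b)$\Rightarrow$(c)$\Rightarrow$(d)$\Rightarrow$(a). The last two links are immediate. If (c) holds, then $\pos{f(y)-\Omega^f_{x,q}(y)}=0$ for all $x,y$, so \eqref{eq:maj_ineq} holds with $L=0$. Trivially (d) implies (a). Once (d) is established, the definition of $L_{p,q}(f)$ as a supremum of zeros gives $L_{p,q}(f)=0$. The substance is therefore (a)$\Rightarrow$(b) and (b)$\Rightarrow$(c).

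For (b)$\Rightarrow$(c), I would use the integral remainder of order $q+1$. Since $f\in C^{q+1}(\E)$, applying \eqref{eq:integral_remainder} with $q+1$ in place of $q$ gives
\[
f(x+h)-\Omega^f_{x,q}(x+h)=\frac{1}{q!}\int_0^1(1-t)^{q}D^{q+1}f(x+th)[h]^{q+1}\,dt.
\]
This is the standard Lagrange/integral form of the Taylor remainder of $\Omega^f_{x,q}$. Alternatively, I can write $\Omega^f_{x,q+1}=\Omega^f_{x,q}+\frac{1}{(q+1)!}D^{q+1}f(x)[h]^{q+1}$ and apply Fact~\ref{lem:dm_remainder} at order $q+1$. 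Under (b) the integrand is nonpositive, so the remainder is $\le 0$, which is (c).

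For (a)$\Rightarrow$(b), I will use a scaling argument. Fix $x$ and $h$ with $\norm{h}=1$, and apply \eqref{eq:maj_ineq} with $y=x+th$ for $t>0$. The $C^{q+1}$ Taylor expansion gives
\[
f(x+th)-\Omega^f_{x,q}(x+th)=\frac{t^{q+1}}{(q+1)!}D^{q+1}f(x)[h]^{q+1}+o(t^{q+1}).
\]
Hence
\[
\frac{1}{(q+1)!}D^{q+1}f(x)[h]^{q+1}+o(1)\le \frac{L}{p}t^{p-q-1}.
\]
Because $p>q+1$, the right-hand side tends to $0$ as $t\downarrow0$, so $D^{q+1}f(x)[h]^{q+1}\le 0$. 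Homogeneity of degree $q+1$ then extends this to all $h$. A cleaner route is to invoke Theorem~\ref{thm:fractional-characterization}~\ref{thm:fractional-characterization:d}, but that result is only stated for $p\le q+1$, so the direct limit argument is the safer choice.

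The main point requiring care is justifying the $o(t^{q+1})$ expansion of the remainder using only $f\in C^{q+1}(\E)$. This follows from the integral formula above together with continuity of $D^{q+1}f$ at $x$: the integrand converges uniformly in $t\in[0,1]$ on the compact unit sphere, and the weight integrates to $\int_0^1(1-t)^q\,dt=\frac{1}{q+1}$. Everything else is routine.
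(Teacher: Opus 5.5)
Your proposal is correct and matches the paper's proof essentially step by step. Both use the same cycle of implications and the same small-$t$ expansion $\frac{t^{q+1}}{(q+1)!}D^{q+1}f(x)[h]^{q+1}+o(t^{q+1})\le \frac{L}{p}t^p\|h\|^p$ (with $p>q+1$) for (a)$\Rightarrow$(b). Both also use the same order-$(q+1)$ integral remainder $\frac{1}{q!}\int_0^1(1-t)^qD^{q+1}f(x+th)[h]^{q+1}\,dt$ for (b)$\Rightarrow$(c), with the remaining links and $L_{p,q}(f)=0$ read off from the definition.
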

\begin{proof}
Suppose first that $f\in\maj{q}(p,L)$ for some $L<+\infty$. Fix $x,h\in\E$ and let $t>0$. Using $f\in C^{q+1}(\E)$ and \eqref{eq:maj_ineq}, we come to
\[
\frac{t^{q+1}}{(q+1)!}D^{q+1}f(x)[h]^{q+1}+o(t^{q+1})=f(x+th)-\Omega^f_{x,q}(x+th) \le \frac{L}{p}t^p\|h\|^p.
\]
Divide by $t^{q+1}$ and let $t\downarrow 0$. Since $p>q+1$, the right-hand side tends to zero. Thus, \ref{thm:superdiagonal:a} implies \ref{thm:superdiagonal:b}.
Suppose next that \ref{thm:superdiagonal:b} holds. Taylor's integral formula gives
\[
f(y)-\Omega_{x,q}^f(y) = \frac1{q!}\int_0^1(1-t)^q D^{q+1}f(x+t(y-x))[y-x]^{q+1}\,dt\le 0,
\]
i.e., $f(y)\le\Omega_{x,q}^f(y)$ for all $x,y\in\E$. Thus, \ref{thm:superdiagonal:b} implies \ref{thm:superdiagonal:c}. The equivalence of \ref{thm:superdiagonal:c} and \ref{thm:superdiagonal:d} follows immediately from Definition~\ref{def:maj}, while \ref{thm:superdiagonal:d} trivially implies
\ref{thm:superdiagonal:a}. Finally, whenever $L_{p,q}(f)<+\infty$, Assertion~\ref{thm:superdiagonal:a} and consequently \ref{thm:superdiagonal:d} hold, i.e., $L_{p,q}(f)=0$.
\end{proof}

Indeed, for $p>q+1$, a finite tensor majorization exists only if the $q$th-order Taylor model already majorizes $f$ globally; the optimal majorization constant is then zero, and the admissible function class does not depend on $p$.

\begin{corollary}[Parity structure for $p>q+1$]\label{cor:rigidity}
Let $f\in C^{q+1}(\E)$ and $p>q+1$.
\begin{enumerate}[label=(\textbf{\alph*}), font=\normalfont\bfseries, leftmargin=0.7cm]
\item If $q$ is even, then $f\in\maj{q}(p,L)$ for some finite $L$ if and only if $f$ is a polynomial of degree at most $q$.
\item If $q$ is odd, then $f\in\maj{q}(p,L)$ for some finite $L$ if and only if
\[
D^{q+1}f(x)[h]^{q+1}\le0 \quad \forall x,h\in\E.
\]
For $q=1$, this condition is equivalent to concavity of $f$.
\end{enumerate}
\end{corollary}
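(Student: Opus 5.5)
The plan is to reduce everything to Theorem~\ref{thm:superdiagonal}. By that theorem, when $p>q+1$ and $f\in C^{q+1}(\E)$, membership $f\in\maj{q}(p,L)$ for some finite $L$ is equivalent to
\[
D^{q+1}f(x)[h]^{q+1}\le 0 \quad \forall x,h\in\E.
\]
Part (b) is then a direct restatement of the equivalence of \ref{thm:superdiagonal:a} and \ref{thm:superdiagonal:b}. For the remark on $q=1$, the condition reads $D^2f(x)[h]^2\le0$ for all $x,h$. For a $C^2$ function, negative semidefiniteness of the Hessian everywhere is the standard characterization of concavity.

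For part (a), let $q$ be even, so $q+1$ is odd. Replacing $h$ by $-h$ gives
\[
D^{q+1}f(x)[-h]^{q+1}=-D^{q+1}f(x)[h]^{q+1}.
\]
Condition \ref{thm:superdiagonal:b} applied to both $h$ and $-h$ therefore forces $D^{q+1}f(x)[h]^{q+1}=0$ for all $x,h$. By \eqref{eq:symmetric_tensor_norm}, since $D^{q+1}f(x)$ is a symmetric $(q+1)$-linear form, this gives $\|D^{q+1}f(x)\|=0$, that is, $D^{q+1}f\equiv0$ on $\E$. Consequently $D^qf$ is constant; this follows from the fundamental theorem of calculus along segments, as in the proof of Theorem~\ref{thm:diagonal_parity}.

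It remains to show that $D^qf$ constant implies $f$ is a polynomial of degree at most $q$. Fix any $x$. The integral remainder \eqref{eq:integral_remainder} gives $f(x+h)-\Omega^f_{x,q}(x+h)=0$ for every $h$, so $f=\Omega^f_{x,q}$, which is a polynomial of degree at most $q$.

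For the converse in (a), suppose $f$ is a polynomial of degree at most $q$. Then $D^{q+1}f\equiv0$, so condition \ref{thm:superdiagonal:b} holds trivially. Alternatively, $f=\Omega^f_{x,q}$ exactly, so $f\in\maj{q}(p,0)$; this also follows from Proposition~\ref{prop:basic}\ref{prop:basic:c} applied to the zero function.

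I expect no real obstacle. The only point needing care is the polarization step: passing from vanishing of the diagonal $D^{q+1}f(x)[h]^{q+1}$ to vanishing of the full symmetric form. This is exactly what \eqref{eq:symmetric_tensor_norm} supplies.
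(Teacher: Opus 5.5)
Your proposal is correct and follows essentially the same route as the paper: you reduce to Theorem~\ref{thm:superdiagonal}, flip $h\mapsto -h$ for even $q$ and invoke \eqref{eq:symmetric_tensor_norm} to get $D^{q+1}f\equiv 0$, then conclude via Taylor's formula. You treat odd $q$ and $q=1$ as direct consequences of the theorem and of the Hessian characterization of concavity, as the paper does. Your extra detail (that $D^qf$ is constant, so the integral remainder vanishes, plus the explicit converse) simply fills in what the paper calls ``immediate.''
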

\begin{proof}
Let $q$ be even. Applying Theorem~\ref{thm:superdiagonal} to $h$ and $-h$ gives
$D^{q+1}f(x)[h]^{q+1}=0$. 
By \eqref{eq:symmetric_tensor_norm}, this implies $D^{q+1}f\equiv0$. Taylor's formula then shows that $f$ is a polynomial of degree at most $q$. The converse is immediate. The odd case is Theorem~\ref{thm:superdiagonal}, and the case $q=1$ is the usual second-order characterization of concavity.
\end{proof}

The next example shows that the case of odd $q$ in Corollary~\ref{cor:rigidity} arises naturally in even-power $L_r$-principal component analysis (PCA).

\begin{example}[Even-power $L_r$-PCA]
$L_p$-based PCA models have been studied for arbitrary positive exponents and extended to generalized and two-dimensional PCA settings; see, e.g.,
\cite{Kwak2014LpPCA,Liang2013LpGPCA,Wang2016G2DPCA}. We use the one-direction formulation \cite{Kwak2014LpPCA}
\[
\maxt{\|w\|=1}\frac1N\sum_{i=1}^N |w^\top x_i|^r,
\]
where $x_1,\ldots,x_N\in\R^n$ are the data vectors and $r>0$ is the PCA exponent.
Let $q\ge3$ be odd, and let $r\ge q+3$ be an even integer. Since $r$ is even, the corresponding composite minimization problem is $f_r(w) = -\frac1N\sum_{i=1}^N(w^\top x_i)^r$ and $g(w)=\delta_{\mathbb S^{n-1}}(w)$.
For every $w,h\in\R^n$,
\begin{equation}\label{eq:lp_pca_qplus1_derivative}
D^{q+1}f_r(w)[h]^{q+1} = -\frac{r!}{(r-q-1)!N} \sum_{i=1}^N (w^\top x_i)^{r-q-1} (h^\top x_i)^{q+1}.
\end{equation}
Since both $r-q-1$ and $q+1$ are even,
$D^{q+1}f_r(w)[h]^{q+1}\le 0$, for $w,h\in\R^n$.
Thus, Theorem~\ref{thm:fractional-characterization}~\ref{thm:fractional-characterization:d} gives $f_r\in\maj{q}(q+1,0)$, i.e., $L_{q+1,q}(f_r)=0$, while Theorem~\ref{thm:superdiagonal} yields $f_r\in\maj{q}(p,0)$, for all $p>q+1$. This conclusion does not follow from global H\"older continuity of $D^qf_r$. If the data are not all zero, choose a unit vector $v$ such that $\sum_{i=1}^N(v^\top x_i)^r>0$. Along $w=tv$,
\[
D^qf_r(tv)[v]^q = -\frac{r!}{(r-q)!N}t^{r-q}\sum_{i=1}^N(v^\top x_i)^r.
\]
Since $r-q\ge3$, for every $\mu\in(0,1]$,
\[
\frac{\|D^qf_r(tv)-D^qf_r(0)\|}{|t|^\mu} \ge \frac{\left|\bigl(D^qf_r(tv)-D^qf_r(0)\bigr)[v]^q \right|}{|t|^\mu} \to+\infty \quad\text{as }t\to+\infty,
\]
i.e., $D^qf_r$ is not globally $\mu$-H\"older continuous for any $\mu\in(0,1]$. In particular, taking $r=q+3$ shows that every odd $q\ge3$ admits an even-power PCA model satisfying $f_{q+3}\in\maj{q}(p,0)$ for all $ p\geq q+1$, while $D^qf_{q+3}$ does not have a global H\"older modulus. The choice $r=8$ and $q=5$ recovers the $L_8$-PCA instance.
\end{example}


\section{Tensor operator, envelope, and method}
\label{sec:Ten_env}
In this section, we study basic variational properties of TOP and TEE. In addition, we investigate the landscape and differentiability of TEE and the local single-valuedness of TOP. Finally, we use the tensor envelope as a Lyapunov function to analyze an exact tensor iteration and its iteration complexity for reaching stationarity.

\subsection{Basic properties and landscape}
We first establish comparison and descent properties of the tensor model. Let us define
\begin{equation}\label{eq:tensor_model_full}
\Model^{p,q}_\gamma(x,y):=\Omega_{x,q}^f(y)+g(y)+\frac1{p\gamma}\|y-x\|^p.
\end{equation}
The set of fixed-points is denoted by
$\Fix(\Tprox{\gf}{\gamma}{p,q}):=\{x\in\E:x\in\Tprox{\gf}{\gamma}{p,q}(x)\}$.
\begin{proposition}[Basic relations]\label{prop:tensor_basic_comparison}
Let $f\in\maj{q}(p,L)$. 
\begin{enumerate}[label=(\textbf{\alph*}), font=\normalfont\bfseries, leftmargin=0.7cm]
\item \label{prop:tensor_basic_comparison:a} For every $\gamma>0$ and all $x,y\in\E$,
\begin{equation}\label{eq:model_majorizes_phi_quantitatively}
\Model^{p,q}_\gamma(x,y) \ge \gf(y)+\frac{\gamma^{-1}-L}{p}\|y-x\|^p.
\end{equation}

\item \label{prop:tensor_basic_comparison:b} If $0<\gamma\le L^{-1}$, then, for every $x\in\E$, 
$\inf_{\E}\gf \le \fgam{\gf}{p,q}{\gamma}(x)\le \gf(x)$
and 
\[
\inft{x\in\E}\fgam{\gf}{p,q}{\gamma}(x) = \inft{x\in\E}\gf(x).
\]

\item \label{prop:tensor_basic_comparison:c}
If $0<\gamma\le L^{-1}$ and $y\in\Tprox{\gf}{\gamma}{p,q}(x)$, then
\begin{equation}\label{eq:tensor_descent_quantitative}
\gf(y)+\frac{\gamma^{-1}-L}{p}\|y-x\|^p \le\fgam{\gf}{p,q}{\gamma}(x)\le\gf(x).
\end{equation}

\item \label{prop:tensor_basic_comparison:d} If $0<\gamma<\mu$, then 
$\fgam{\gf}{p,q}{\mu}(x)\le\fgam{\gf}{p,q}{\gamma}(x)$.
\end{enumerate}
\end{proposition}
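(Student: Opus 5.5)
All four assertions follow directly from the majorization inequality \eqref{eq:maj_ineq} and the definitions \eqref{eq:value}, \eqref{eq:tpo}, \eqref{eq:tensor_model_full}; no earlier structural theorem is needed.

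For \ref{prop:tensor_basic_comparison:a}, I fix $x,y\in\E$ and $\gamma>0$. Since $f\in\maj{q}(p,L)$, \eqref{eq:maj_ineq} gives $\Omega^f_{x,q}(y)\ge f(y)-\frac{L}{p}\|y-x\|^p$. Adding $g(y)+\frac{1}{p\gamma}\|y-x\|^p$ to both sides yields \eqref{eq:model_majorizes_phi_quantitatively}. This holds trivially, in extended arithmetic, when $g(y)=+\infty$.

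For \ref{prop:tensor_basic_comparison:b}, I take $\gamma\le L^{-1}$, so that $\gamma^{-1}-L\ge0$. Then \ref{prop:tensor_basic_comparison:a} gives $\Model^{p,q}_\gamma(x,y)\ge\gf(y)\ge\inf_\E\gf$ for every $y$. Taking the infimum over $y$ gives $\fgam{\gf}{p,q}{\gamma}(x)\ge\inf_\E\gf$. For the upper bound I choose $y=x$. Since $\Omega^f_{x,q}(x)=f(x)$ and the regularizer vanishes, $\Model^{p,q}_\gamma(x,x)=\gf(x)$, hence $\fgam{\gf}{p,q}{\gamma}(x)\le\gf(x)$. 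Taking the infimum over $x$ in the sandwich $\inf_\E\gf\le\fgam{\gf}{p,q}{\gamma}(x)\le\gf(x)$ gives the equality of infima. The case $L=0$ is covered by the convention $0^{-1}=+\infty$.

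For \ref{prop:tensor_basic_comparison:c}, let $y\in\Tprox{\gf}{\gamma}{p,q}(x)$. Then $\fgam{\gf}{p,q}{\gamma}(x)=\Model^{p,q}_\gamma(x,y)$. Applying \eqref{eq:model_majorizes_phi_quantitatively} gives the left inequality of \eqref{eq:tensor_descent_quantitative}, and \ref{prop:tensor_basic_comparison:b} gives the right one.

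For \ref{prop:tensor_basic_comparison:d}, if $\gamma<\mu$ then $\frac{1}{p\mu}\|y-x\|^p\le\frac{1}{p\gamma}\|y-x\|^p$ for every $y$. Hence $\Model^{p,q}_\mu(x,\cdot)\le\Model^{p,q}_\gamma(x,\cdot)$ pointwise, and taking infima preserves the inequality. This part needs no majorization assumption.

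The only delicate point is bookkeeping rather than a real obstacle. Values may be $\pm\infty$: in \ref{prop:tensor_basic_comparison:b}, if $\inf\gf=-\infty$ the statement is still consistent. In \ref{prop:tensor_basic_comparison:c}, the argmin is assumed nonempty at $x$ and so the value is attained. I would therefore remark that all inequalities are understood in $\R\cup\{\pm\infty\}$.
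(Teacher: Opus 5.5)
Your proposal is correct and follows essentially the same route as the paper: rearrange the majorization inequality and add $g(y)+\frac{1}{p\gamma}\|y-x\|^p$ for (a), then take the infimum over $y$ and test $y=x$ for (b). Part (c) then combines (a) with attainment, and (d) follows from pointwise monotonicity of the regularizer in $\gamma$. Your extra remarks on extended-valued arithmetic and the convention $0^{-1}=+\infty$ are harmless additions.
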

\begin{proof}
Since $f\in\maj{q}(p,L)$, we have
$\Omega_{x,q}^f(y) \ge f(y)-\frac{L}{p}\|y-x\|^p$.
Adding $g(y)$ and $(p\gamma)^{-1}\|y-x\|^p$ gives \eqref{eq:model_majorizes_phi_quantitatively}. Taking the infimum in $y$ yields $\inf_{\E}\gf \le \fgam{\gf}{p,q}{\gamma}(x)$ and using $y=x$ in \eqref{eq:value} gives $\fgam{\gf}{p,q}{\gamma}(x)\le \gf(x)$. 
These inequalities also give equality of the infima.
If $y\in\Tprox{\gf}{\gamma}{p,q}(x)$, then $\fgam{\gf}{p,q}{\gamma}(x)=\Model^{p,q}_\gamma(x,y)$,
so \eqref{eq:tensor_descent_quantitative} follows from \eqref{eq:model_majorizes_phi_quantitatively} and
Assertion~\ref{prop:tensor_basic_comparison:b}. Finally, the monotonicity in $\gamma$ follows directly from the definition.
\end{proof}

Next, we establish the basic existence and continuity properties of TOP and TEE.

\begin{theorem}[Basic properties of TOP and TEE]\label{thm:objective_prox_wellposed}
Let $f\in\maj{q}(p,L)$ and suppose that $\gf$ is high-order prox-bounded with threshold
$\tau:=\gamma^{\gf,p}$. Set $\ov\gamma_{L,\tau} := \frac{1}{L+\tau^{-1}}$, let $0<\gamma<\ov\gamma_{L,\tau}$, and define
$\delta_\gamma := \frac{\gamma}{1-\gamma L}$.
Then $\delta_\gamma<\tau$ and
\begin{equation}\label{eq:HOME_lower_tensor}
\fgam{\gf}{p}{\delta_\gamma}(x) \le \fgam{\gf}{p,q}{\gamma}(x) \le \gf(x)
\quad \forall x\in\E.
\end{equation}
In addition, the following properties hold.
\begin{enumerate}[label=(\textbf{\alph*}), font=\normalfont\bfseries, leftmargin=0.7cm]
\item\label{thm:objective_prox_wellposed:a} For every $x\in\E$, the function $y\mapsto\Model^{p,q}_\gamma(x,y)$ is proper, lsc, and level-bounded;
\item The set $\Tprox{\gf}{\gamma}{p,q}(x)$ is nonempty and compact for every $x\in\E$;
\item The tensor envelope $\fgam{\gf}{p,q}{\gamma}$ is finite-valued on $\E$;
\item $\Model^{p,q}_\gamma(x,y)$ is level-bounded in $y$ locally uniformly in $(x,\gamma)$ on
$\E\times(0,\ov\gamma_{L,\tau})$;

\item The map $(x,\gamma)\mapsto\fgam{\gf}{p,q}{\gamma}(x)$ is continuous on
$\E\times(0,\ov\gamma_{L,\tau})$;

\item\label{thm:objective_prox_wellposed:f} If $x^k\to x$, $\gamma_k\to\gamma\in(0,\ov\gamma_{L,\tau})$, and
$y^k\in\Tprox{\gf}{\gamma_k}{p,q}(x^k)$,
then $\{y^k\}_{k\in\Nz}$ is bounded and each cluster point belongs to $\Tprox{\gf}{\gamma}{p,q}(x)$;

\item \label{thm:objective_prox_wellposed:g} For each $x\in\E$, $\fgam{\gf}{p,q}{\gamma}(x)\uparrow\gf(x)$ as $\gamma\downarrow 0$.
\end{enumerate}
\end{theorem}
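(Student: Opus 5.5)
The whole argument rests on one sandwich: the tensor model is bounded below by the high-order Moreau model of $\gf$ with parameter $\delta_\gamma$, and above by its value at $y=x$. Once this is in place, each of (a)--(g) follows from Fact~\ref{fact:level-bound+locally uniform} and standard parametric-minimization results \cite[Theorem~1.17]{Rockafellar09}.

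First I check $\delta_\gamma<\tau$. Since $\gamma<(L+\tau^{-1})^{-1}$, we have $\gamma^{-1}-L>\tau^{-1}$, hence $\delta_\gamma=(\gamma^{-1}-L)^{-1}<\tau$. Also $\gamma^{-1}-L>0$, so $\delta_\gamma>0$. The case $\tau=+\infty$ is covered by the convention $(+\infty)^{-1}=0$. Next, Proposition~\ref{prop:tensor_basic_comparison}\ref{prop:tensor_basic_comparison:a} gives
\[
\Model^{p,q}_\gamma(x,y)\ge \gf(y)+\frac{1}{p\delta_\gamma}\|y-x\|^p .
\]
Taking the infimum over $y$ yields the left inequality in \eqref{eq:HOME_lower_tensor}. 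The right inequality follows by choosing $y=x$.

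Fix $x$. Properness of $y\mapsto\Model^{p,q}_\gamma(x,y)$ holds because $\Model^{p,q}_\gamma(x,x)=\gf(x)$ for $x\in\dom g$. Lower semicontinuity holds because the Taylor polynomial and the power term are continuous and $g$ is lsc. Level-boundedness comes from the displayed lower bound together with level-boundedness of $y\mapsto\gf(y)+\frac{1}{p\delta_\gamma}\|y-x\|^p$ for $\delta_\gamma<\tau$ (Fact~\ref{fact:level-bound+locally uniform}). This gives (a). Then (b) is the usual existence of minimizers of a proper, lsc, level-bounded function on a finite-dimensional space. For (c), the envelope is bounded below by $\fgam{\gf}{p}{\delta_\gamma}(x)>-\infty$ and above by $\gf(x)$ on $\dom g$. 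Off $\dom g$ it is still finite, since by (b) a minimizer exists and some $y\in\dom g$ gives a finite value.

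For (d), I fix $(\bar x,\bar\gamma)$ and pick $\gamma'\in(\bar\gamma,\ov\gamma_{L,\tau})$. For $\gamma\le\gamma'$ we have $\delta_\gamma\le\delta_{\gamma'}<\tau$, so
\[
\Model^{p,q}_\gamma(x,y)\ge \gf(y)+\frac{1}{p\delta_{\gamma'}}\|y-x\|^p .
\]
The right-hand side is level-bounded in $y$ locally uniformly in $x$ by Fact~\ref{fact:level-bound+locally uniform}, which gives local uniformity in $(x,\gamma)$. The joint function $(y,x,\gamma)\mapsto\Model^{p,q}_\gamma(x,y)$ is lsc, and for fixed $y$ it is continuous in $(x,\gamma)$ because $f\in C^q$. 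Applying \cite[Theorem~1.17]{Rockafellar09} then yields continuity of the envelope, which is (e), and the outer semicontinuity and boundedness in (f).

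I expect this step to be the main obstacle. Theorem~1.17(c) needs continuity of the value at $(\bar x,\bar\gamma)$ along some minimizer $\bar y$, i.e., $\Model^{p,q}_{\gamma_k}(x^k,\bar y)\to\Model^{p,q}_{\bar\gamma}(\bar x,\bar y)$. This holds because $g(\bar y)$ is a fixed finite number and the remaining terms vary continuously. Upper semicontinuity then follows, and the lower bound comes from lsc together with local uniform level-boundedness.

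For (g), the monotonicity in Proposition~\ref{prop:tensor_basic_comparison}\ref{prop:tensor_basic_comparison:d} shows that the envelope increases as $\gamma\downarrow0$ and is bounded by $\gf(x)$. For the limit, I use $\fgam{\gf}{p}{\delta_\gamma}(x)\le\fgam{\gf}{p,q}{\gamma}(x)$ with $\delta_\gamma\downarrow0$, together with the classical fact that $\fgam{\gf}{p}{\delta}(x)\uparrow\gf(x)$ as $\delta\downarrow0$ for a proper, lsc, prox-bounded function \cite[Theorem~1.25]{Rockafellar09}. I adapt the latter to the $p$th-power regularizer as follows. Take minimizers $y_\delta$; they satisfy $\|y_\delta-x\|\to0$ by the lower bound. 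Lower semicontinuity of $\gf$ then gives the limit, including the value $+\infty$ when $x\notin\dom g$.
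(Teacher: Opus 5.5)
Your proof is correct. For the sandwich \eqref{eq:HOME_lower_tensor} and items (a)--(f) it coincides with the paper's argument. Both rest on the lower bound $\Model^{p,q}_\gamma(x,y)\ge\gf(y)+\frac{1}{p\delta_{\gamma'}}\|y-x\|^p$ for $\gamma\le\gamma'<\ov\gamma_{L,\tau}$, on Fact~\ref{fact:level-bound+locally uniform}, and on \cite[Theorem~1.17]{Rockafellar09}. You are more explicit than the paper in checking the continuity hypothesis of Theorem~1.17(c) at a minimizer $\bar y\in\dom g$.

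The one real variation is in (g). You squeeze $\fgam{\gf}{p}{\delta_\gamma}(x)\le\fgam{\gf}{p,q}{\gamma}(x)\le\gf(x)$ with $\delta_\gamma\downarrow0$ and reduce to convergence of the high-order Moreau envelope. The paper instead works directly with tensor minimizers $y^k$, a fixed auxiliary $\delta\in(0,\tau)$, and \eqref{eq:tensor_descent_quantitative}. The mechanism is the same in both: minimizers collapse onto $x$, then lower semicontinuity applies. Your route is more modular and moves the argument to a setting that no longer involves $L$. The paper's route additionally shows that the tensor minimizers themselves converge to $x$ when $x\in\dom\gf$.

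One small correction is needed. For $x\notin\dom g$, the HOME minimizers $y_\delta$ need not converge to $x$: take $\gf$ to be the indicator of $\{0\}$ and $x\neq0$, so that $y_\delta=0$ for every $\delta$. That case must be handled by contradiction, as the paper does. Suppose the envelope values stayed below some $M$ along a subsequence. Comparison with a fixed $\delta_0\in(0,\tau)$ would then force $y_{\delta_k}\to x$ while $\gf(y_{\delta_k})\le M$, contradicting lower semicontinuity of $\gf$ at $x$.
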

\begin{proof}
Since $\gamma<\ov\gamma_{L,\tau}$, $\delta_\gamma=\frac{1}{\gamma^{-1}-L}<\tau$.
By \eqref{eq:model_majorizes_phi_quantitatively},
\[
\Model^{p,q}_\gamma(x,y)\ge \gf(y)+\frac{\gamma^{-1}-L}{p}\norm{y-x}^p= \gf(y)+\frac1{p\delta_\gamma}\|y-x\|^p.
\]
Taking the infimum in $y$ gives the first inequality in \eqref{eq:HOME_lower_tensor}. The second is obtained from Proposition~\ref{prop:tensor_basic_comparison}~\ref{prop:tensor_basic_comparison:b}.
Since $\delta_\gamma<\tau$, Fact~\ref{fact:level-bound+locally uniform} implies that 
$y\mapsto \gf(y)+\frac{1}{p\delta_\gamma}\|y-x\|^p$ is level-bounded. Hence 
$y\mapsto\Model^{p,q}_\gamma(x,y)$ is also level-bounded. It is proper and lsc because $g$ is proper and lsc and the remaining terms are continuous. Since it is also level-bounded, its minimum is attained, which proves that TOP is nonempty and compact and that TEE is finite-valued.
We next prove the locally uniform statement. Fix $(\ov x,\ov\gamma)\in\E\times(0,\ov\gamma_{L,\tau})$ and choose $\gamma_2$ so that
$\ov\gamma<\gamma_2<\ov\gamma_{L,\tau}$.
After restricting $\gamma$ to a neighborhood of $\ov\gamma$, 
we may assume $\gamma\leq\gamma_2$. Since $\delta_\gamma\leq\delta_{\gamma_2}<\tau$,
\[
\Model^{p,q}_\gamma(x,y) \ge \gf(y)+\frac1{p\delta_{\gamma_2}}\|y-x\|^p.
\]
By Fact~\ref{fact:level-bound+locally uniform}, the right-hand side is level-bounded in $y$ locally uniformly in $x$. Hence
$\Model_\gamma^{p,q}(x,y)$ is level-bounded in $y$ locally uniformly in $(x,\gamma)$.
The continuity of the value function and the boundedness and outer-semicontinuity of the corresponding argmin mapping now follow from the standard parametric-minimization theorem \cite[Theorem~1.17]{Rockafellar09}.
Finally, let $\gamma_k\downarrow0$ and fix $\delta\in(0,\tau)$. If $x\in\dom\gf$, choose $y^k\in\Tprox{\gf}{\gamma_k}{p,q}(x)$. Since
$\gf(y^k)\ge \fgam{\gf}{p}{\delta}(x)-\frac{1}{p\delta}\|y^k-x\|^p$, \eqref{eq:tensor_descent_quantitative} gives
\[
\fgam{\gf}{p}{\delta}(x)+\frac{\gamma_k^{-1}-L-\delta^{-1}}{p}\|y^k-x\|^p\le \gf(x).
\]
Hence $y^k\to x$. Lower semicontinuity gives $\liminf_k\fgam{\gf}{p,q}{\gamma_k}(x)\ge\gf(x)$, while the reverse inequality follows from $\fgam{\gf}{p,q}{\gamma_k}(x)\le\gf(x)$.
If $x\notin\dom\gf$ and $\fgam{\gf}{p,q}{\gamma_k}(x)$ were bounded above along a subsequence, the same lower estimate would force the corresponding minimizers to converge to $x$ while their objective values remain bounded above, contradicting lower semicontinuity at $\gf(x)=+\infty$. Thus $\fgam{\gf}{p,q}{\gamma}(x)\to\gf(x)$ for every $x\in\E$. Monotonicity in $\gamma$ then gives the asserted upward convergence.
\end{proof}

An immediate simplification occurs when the original objective is bounded below.
\begin{corollary}[Bounded-below objectives]\label{cor:bounded_below_wellposed}
Let $f\in\maj{q}(p,L)$ and let $\gf$ be bounded below. Then, for every $0<\gamma<L^{-1}$, all conclusions of Theorem~\ref{thm:objective_prox_wellposed} hold. 
\end{corollary}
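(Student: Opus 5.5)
The corollary reduces to Theorem~\ref{thm:objective_prox_wellposed}: it is enough to show that a bounded-below $\gf$ is high-order prox-bounded with threshold $\tau=\gamma^{\gf,p}=+\infty$, and that the admissible range $0<\gamma<\ov\gamma_{L,\tau}$ then becomes $0<\gamma<L^{-1}$.

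First, let $m:=\inf_\E\gf>-\infty$. Since $g$ is proper and $f$ is finite-valued, $\gf$ is proper. For every $\gamma>0$ and $x\in\E$,
\[
\fgam{\gf}{p}{\gamma}(x)=\inft{y\in\E}\left\{\gf(y)+\frac{1}{p\gamma}\|x-y\|^p\right\}\ge m>-\infty .
\]
Hence every $\gamma>0$ is admissible in Definition~\ref{def:s-prox-bounded}, and $\gamma^{\gf,p}=+\infty$; this is Remark~\ref{rem:prox-bounded}. Moreover, $\gf$ is lsc because $f$ is continuous and $g$ is lsc, so the standing hypotheses of Fact~\ref{fact:level-bound+locally uniform} hold for $\psi=\gf$ with every parameter in $(0,+\infty)$.

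Next, with the convention $(+\infty)^{-1}=0$, we get $\tau^{-1}=0$ and therefore $\ov\gamma_{L,\tau}=\frac{1}{L+0}=L^{-1}$. Here $L^{-1}=+\infty$ when $L=0$, by the convention $0^{-1}=+\infty$. So the condition $0<\gamma<\ov\gamma_{L,\tau}$ in Theorem~\ref{thm:objective_prox_wellposed} is exactly $0<\gamma<L^{-1}$. For such $\gamma$, we have $\delta_\gamma=\gamma/(1-\gamma L)\in(0,+\infty)=(0,\tau)$. Every step of the proof of Theorem~\ref{thm:objective_prox_wellposed} therefore goes through verbatim, and all conclusions (a)--(g) follow.

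The only point needing care is the degenerate case $L=0$ combined with $\tau=+\infty$, where $\ov\gamma_{L,\tau}=+\infty$. There I will check that the proof of the theorem never uses finiteness of $\ov\gamma_{L,\tau}$. The only place it might is the choice of $\gamma_2$ with $\ov\gamma<\gamma_2<\ov\gamma_{L,\tau}$, and such a $\gamma_2$ still exists when the upper bound is $+\infty$. The conventions stated in Section~\ref{sec:prelim} cover everything else.
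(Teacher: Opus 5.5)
Your proposal is correct and follows the paper's own argument: boundedness below gives $\gamma^{\gf,p}=+\infty$, so $\ov\gamma_{L,+\infty}=L^{-1}$, and Theorem~\ref{thm:objective_prox_wellposed} applies directly. Your added checks (lower semicontinuity of $\gf$, the case $L=0$, and the existence of $\gamma_2$ when the upper bound is $+\infty$) make explicit what the paper leaves implicit.
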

\begin{proof}
Since $\gf$ is bounded below, $\gamma^{\gf,p}=+\infty$, resulting in $\ov\gamma_{L,+\infty}=L^{-1}$, and the result follows from Theorem~\ref{thm:objective_prox_wellposed}.
\end{proof}

The subsequent example shows that the parameter range in Theorem~\ref{thm:objective_prox_wellposed} is sharp.
\begin{example}[Sharpness of the parameter range]\label{ex:sharp_gamma_threshold}
Let $q\ge 1$ be odd, set $p=q+1$, and consider 
$f(t)=\frac{L}{q+1}t^{q+1}$ and $g(t)=-\frac{L+\tau^{-1}}{q+1}t^{q+1}$ on $\R$, 
with $L>0$ and $0<\tau<+\infty$. Then
$\gf(t)=-\frac{1}{(q+1)\tau}t^{q+1}$,
and the prox-boundedness threshold of $\gf$ is $\tau$. Indeed, for a HOME parameter $\delta$, the leading coefficient of $\gf(y)+\frac{1}{p\delta}|y-x|^p$ is $\frac1p(\delta^{-1}-\tau^{-1})$, which is
 positive for $\delta<\tau$ and negative for $\delta>\tau$. Moreover,
$f^{(q+1)}(t)=q!L$,
and hence Theorem~\ref{thm:fractional-characterization}\ref{thm:fractional-characterization:d} gives $f\in\maj{q}(q+1,L)$. Since $\Omega_{0,q}^f(y)=0$, the tensor model at $x=0$ is $\Model^{q+1,q}_\gamma(0,y) = \frac{\gamma^{-1}-L-\tau^{-1}}{q+1}|y|^{q+1}$, i.e., it is level-bounded when $\gamma<\ov\gamma_{L,\tau}$, it is identically zero at $\gamma=\ov\gamma_{L,\tau}$ and hence has the noncompact argmin set $\R$, and it is unbounded below whenever $\gamma>\ov\gamma_{L,\tau}$. Hence, the parameter range in Theorem~\ref{thm:objective_prox_wellposed} cannot be enlarged, in general.
\end{example}

Another well-definedness route, which does not require a tensor-majorant assumption, follows by controlling the Taylor polynomial with the power regularizer.

\begin{proposition}[Alternative well-definedness route]\label{prop:g_prox_wellposed}
Suppose that $p>q$ and that $g$ is high-order prox-bounded with threshold $\gamma^{g,p}>0$. Then, for every $\gamma\in(0,\gamma^{g,p})$, the conclusions in Theorem~\ref{thm:objective_prox_wellposed}~\ref{thm:objective_prox_wellposed:a}--\ref{thm:objective_prox_wellposed:g} hold with $(0,\gamma^{g,p})$.
\end{proposition}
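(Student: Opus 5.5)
The approach is to reproduce the proof of Theorem~\ref{thm:objective_prox_wellposed}, replacing the lower bound \eqref{eq:model_majorizes_phi_quantitatively} (which used the tensor majorant) by a lower bound driven only by $g$. Fix $\gamma\in(0,\gamma^{g,p})$ and pick $\gamma<\delta_1<\delta_2<\gamma^{g,p}$. The central estimate we aim for is: for every compact set of pairs $(\bar x,\bar\gamma)$ with $\bar\gamma\le\gamma_2<\delta_1$ (choose $\gamma_2$ slightly above $\gamma$), there is a constant $C$ such that
\[
\Omega^f_{x,q}(y)+\frac{1}{p\gamma'}\|y-x\|^p\ \ge\ \frac{1}{p\delta_1}\|y-x\|^p - C
\]
for all $y$, all $x$ near $\bar x$, and all $\gamma'\le\gamma_2$. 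This gives
\[
\Model^{p,q}_{\gamma'}(x,y)\ \ge\ g(y)+\frac{1}{p\delta_1}\|y-x\|^p - C .
\]

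To prove this estimate, write $h=y-x$. Then $\Omega^f_{x,q}(y)=f(x)+\sum_{j=1}^q\frac1{j!}D^jf(x)[h]^j$. Since $f\in C^q$, the values $|f(x)|$ and $\|D^jf(x)\|$ are bounded by some $M$ for $x$ in a compact neighborhood of $\bar x$. Hence
\[
\Omega^f_{x,q}(y)\ \ge\ -M\sum_{j=0}^q\|h\|^j/j!.
\]
The regularization surplus $\bigl(\frac{1}{p\gamma_2}-\frac1{p\delta_1}\bigr)\|h\|^p$ has positive coefficient. Because $p>q$, this power term dominates the polynomial $M\sum_{j\le q}\|h\|^j/j!$ for large $\|h\|$. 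So the difference is bounded below on $[0,\infty)$ by some $-C$; this is elementary one-variable calculus. This is where $p>q$ is used, and it is the main step; the rest is bookkeeping.

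Given the estimate, the assertions follow as before.
\begin{description}
\item[Local uniform level-boundedness.] By Fact~\ref{fact:level-bound+locally uniform} applied to $g$ with parameter $\delta_1<\gamma^{g,p}$, the function $g(y)+\frac1{p\delta_1}\|y-x\|^p$ is lsc and level-bounded in $y$ locally uniformly in $x$. The constant shift $-C$ preserves this, so $\Model^{p,q}_{\gamma'}(x,y)$ is level-bounded in $y$ locally uniformly in $(x,\gamma')$ on $\E\times(0,\gamma^{g,p})$.
\item[Properness and lsc.] The model is proper since $g$ is proper and the other terms are finite. It is lsc, jointly in $(x,\gamma',y)$, since $(x,y)\mapsto\Omega^f_{x,q}(y)$ is continuous because $f\in C^q$.
\item[Existence, compactness, finiteness.] Nonemptiness and compactness of $\Tprox{\gf}{\gamma}{p,q}(x)$ and finiteness of the envelope follow from minimizing a proper lsc level-bounded function.
\item[Continuity and outer semicontinuity.] Continuity of $(x,\gamma)\mapsto\fgam{\gf}{p,q}{\gamma}(x)$, together with boundedness and outer semicontinuity of the minimizer map (item \ref{thm:objective_prox_wellposed:f}), follow from \cite[Theorem~1.17]{Rockafellar09}. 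Its continuity hypothesis holds: for fixed $y$, the map $(x,\gamma)\mapsto\Model^{p,q}_\gamma(x,y)$ is continuous whenever $g(y)<\infty$.
\end{description}

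For item \ref{thm:objective_prox_wellposed:g}, monotonicity in $\gamma$ follows directly from the definition, and $\fgam{\gf}{p,q}{\gamma}(x)\le\gf(x)$ follows by taking $y=x$. Now let $\gamma_k\downarrow0$ with minimizers $y^k$ at a fixed $x$. By the estimate (with $\gamma_2$ fixed and $\gamma_k\le\gamma_2$), together with the extra surplus $\bigl(\frac1{p\gamma_k}-\frac1{p\gamma_2}\bigr)\|y^k-x\|^p$, we get
\[
\inf_y\Bigl\{g(y)+\tfrac1{p\delta_1}\|y-x\|^p\Bigr\}-C+\Bigl(\tfrac{1}{p\gamma_k}-\tfrac1{p\gamma_2}\Bigr)\|y^k-x\|^p\ \le\ \gf(x).
\]
Hence $y^k\to x$ when $x\in\dom\gf$. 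By continuity of $\Omega^f_{x,q}$ and lsc of $g$,
\[
\liminf_k\fgam{\gf}{p,q}{\gamma_k}(x)\ \ge\ f(x)+g(x).
\]
For $x\notin\dom g$, argue by contradiction as in Theorem~\ref{thm:objective_prox_wellposed}: bounded values would force $y^k\to x$ with $g(y^k)$ bounded above, contradicting lsc.
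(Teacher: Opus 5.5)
Your proposal is correct and takes essentially the same route as the paper. You split off $g(y)+\frac{1}{p\delta_1}\|y-x\|^p$ (the paper's $\eta$-term) to get local uniform level-boundedness from the high-order prox-boundedness of $g$, use $p>q$ so the surplus power term dominates the Taylor polynomial uniformly near $\bar x$, and then invoke \cite[Theorem~1.17]{Rockafellar09} together with the comparison-with-$y=x$ argument for the recovery property. The only difference is that you make the constant $C$ and the continuity hypothesis of the parametric-minimization theorem explicit, where the paper leaves them implicit.
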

\begin{proof}
Fix $(\ov x,\ov\gamma)\in\E\times(0,\gamma^{g,p})$ and choose $\eta$ with $\ov\gamma<\eta<\gamma^{g,p}$. For $(x,\gamma)$ near $(\ov x,\ov\gamma)$, we may assume $\gamma<\eta$. Decompose
\[
\Model^{p,q}_\gamma(x,y)=g(y)+\frac1{p\eta}\|y-x\|^p+\Omega^f_{x,q}(y)+\frac{\gamma^{-1}-\eta^{-1}}p\|y-x\|^p.
\]
The first two terms are level-bounded in $y$, locally uniformly in $x$, by high-order prox-boundedness of $g$. Since $p>q$, the positive $p$th-order term in the remaining part dominates the degree-$q$ Taylor polynomial uniformly for $x$ in a compact neighborhood of $\ov x$. Hence, the tensor models are level-bounded locally uniformly in $(x,\gamma)$. Properness and lower semicontinuity then give nonempty compact argmin sets and finite values, while the standard parametric-minimization theorem yields continuity of TEE as well as local boundedness and outer
semicontinuity of TOP. For the recovery property, let $\gamma_k\downarrow0$ and choose tensor
minimizers $y^k$. If $x\in\dom g$, comparison with $y=x$ and the above decomposition imply $y^k\to x$, and lower semicontinuity gives $\fgam{\gf}{p,q}{\gamma_k}(x)\to\gf(x)$. If $x\notin\dom g$ and the envelope values were bounded above along a subsequence, the same coercive decomposition would again force $y^k\to x$ while keeping $g(y^k)$ bounded above, contradicting lower semicontinuity of $g$ at $x$, where $g(x)=+\infty$. Monotonicity in $\gamma$ follows directly from the definition.
\end{proof}

Next, we relate the minimizers and critical points of $\gf$ to the fixed points of TOP and the minimizers of TEE.
\begin{proposition}[Minimizers, fixed points, and critical points]\label{prop:fixed_critical}
The following statements hold:
\begin{enumerate}[label=(\textbf{\alph*}), font=\normalfont\bfseries, leftmargin=0.7cm]
\item \label{prop:fixed_critical:a} For every $p>1$, $q\ge1$, and $\gamma>0$,
$\Fix(\Tprox{\gf}{\gamma}{p,q}) \subseteq \crit(\gf)$.

\item \label{prop:fixed_critical:b} Let $f\in\maj{q}(p,L)$ and $\gamma\in (0,L^{-1})$. Then,
for every $x^\star\in\argmint{x\in\E}\gf(x)$,
\[
    \Tprox{\gf}{\gamma}{p,q}(x^\star)=\{x^\star\}, \quad\argmint{x\in\E}\gf(x) \subseteq \Fix(\Tprox{\gf}{\gamma}{p,q}) \subseteq \crit(\gf).
\]
If, in addition, $\fgam{\gf}{p,q}{\gamma}$ is finite-valued on $\E$, then
\begin{equation}\label{eq:equality_minimizers_new}
\argmint{x\in\E}\fgam{\gf}{p,q}{\gamma}(x) = \argmint{x\in\E}\gf(x).
\end{equation}
\end{enumerate}
\end{proposition}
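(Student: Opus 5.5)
For \ref{prop:fixed_critical:a}, I will use the first-order optimality condition for the tensor subproblem. Let $x\in\Fix(\Tprox{\gf}{\gamma}{p,q})$, so $x$ minimizes $y\mapsto \Model^{p,q}_\gamma(x,y)$. Write $\Model^{p,q}_\gamma(x,\cdot)=g+h_x$, where
\[
h_x(y):=\Omega^f_{x,q}(y)+\frac{1}{p\gamma}\|y-x\|^p .
\]
Since $p>1$, $h_x$ is $C^1$. Fermat's rule gives $0\in\widehat\partial\bigl(g+h_x\bigr)(x)$, and the smooth sum rule gives $\widehat\partial(g+h_x)(x)=\nabla h_x(x)+\widehat\partial g(x)$. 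I then compute $\nabla h_x(x)$. The Taylor polynomial has gradient $\nabla f(x)$ at $y=x$, because every term of order $j\ge 2$ has vanishing gradient there. The regularizer has gradient $\|y-x\|^{p-2}B(y-x)$, which is $0$ at $y=x$ by the stated convention. Hence $0\in\nabla f(x)+\widehat\partial g(x)\subseteq\nabla f(x)+\partial g(x)$. Also $x\in\dom g$, since the minimum value is finite: it is at most the value at any point of $\dom g$, and $g$ is proper. So $x\in\crit(\gf)$.

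For \ref{prop:fixed_critical:b}, let $x^\star$ be a global minimizer and $y\in\Tprox{\gf}{\gamma}{p,q}(x^\star)$. This set is nonempty: $\gf$ is bounded below by $\gf(x^\star)$, so Corollary~\ref{cor:bounded_below_wellposed} applies. Inequality \eqref{eq:tensor_descent_quantitative} then gives
\[
\gf(x^\star)+\frac{\gamma^{-1}-L}{p}\|y-x^\star\|^p\le \gf(y)+\frac{\gamma^{-1}-L}{p}\|y-x^\star\|^p\le \gf(x^\star).
\]
Since $\gamma<L^{-1}$, the coefficient $\gamma^{-1}-L$ is positive, which forces $y=x^\star$. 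Thus $\Tprox{\gf}{\gamma}{p,q}(x^\star)=\{x^\star\}$, so $\argmin{}\gf\subseteq\Fix$. The remaining inclusion $\Fix\subseteq\crit(\gf)$ is part \ref{prop:fixed_critical:a}.

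For \eqref{eq:equality_minimizers_new}, I use Proposition~\ref{prop:tensor_basic_comparison}\ref{prop:tensor_basic_comparison:b}, which gives $\inf\fgam{\gf}{p,q}{\gamma}=\inf\gf$.
\begin{itemize}
\item[$\supseteq$:] If $x^\star$ minimizes $\gf$, then $\inf\gf\le\fgam{\gf}{p,q}{\gamma}(x^\star)\le\gf(x^\star)=\inf\gf=\inf\fgam{\gf}{p,q}{\gamma}$, so $x^\star$ minimizes the envelope.
\item[$\subseteq$:] Let $\bar x$ minimize the envelope, so $\fgam{\gf}{p,q}{\gamma}(\bar x)=\inf\gf$, which is finite by the finite-valuedness hypothesis. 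The subtle point is that the infimum in \eqref{eq:value} must be attained. Since $\gf$ is bounded below, Corollary~\ref{cor:bounded_below_wellposed} (Theorem~\ref{thm:objective_prox_wellposed}) gives some $y\in\Tprox{\gf}{\gamma}{p,q}(\bar x)$. Then \eqref{eq:tensor_descent_quantitative} yields
\[
\gf(y)+\frac{\gamma^{-1}-L}{p}\|y-\bar x\|^p\le\inf\gf\le\gf(y),
\]
so $y=\bar x$ and $\gf(\bar x)=\inf\gf$, i.e., $\bar x$ minimizes $\gf$.
\end{itemize}

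The main obstacle is this attainment step. If the argmin is not nonempty, I would instead take a minimizing sequence $y^k$ for $\Model^{p,q}_\gamma(\bar x,\cdot)$. The same inequality gives $\|y^k-\bar x\|\to0$ and $\gf(y^k)\to\inf\gf$, and lower semicontinuity of $\gf$ then gives $\gf(\bar x)\le\inf\gf$. This variant avoids the compactness result entirely.
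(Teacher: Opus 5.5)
Your proof is correct and essentially the paper's: part (a) is identical, and for the inclusion of minimizers of $\fgam{\gf}{p,q}{\gamma}$ into minimizers of $\gf$ the paper uses exactly your fallback minimizing-sequence argument, so it never needs attainment from Corollary~\ref{cor:bounded_below_wellposed}. Likewise, for $\Tprox{\gf}{\gamma}{p,q}(x^\star)=\{x^\star\}$ the paper compares $\Model^{p,q}_\gamma(x^\star,y)$ directly with $\Model^{p,q}_\gamma(x^\star,x^\star)=\gf(x^\star)$ via \eqref{eq:model_majorizes_phi_quantitatively}, which gives nonemptiness for free; your route through the corollary is valid but uses more machinery than needed.
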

\begin{proof}
\ref{prop:fixed_critical:a} Let $x\in\Fix(\Tprox{\gf}{\gamma}{p,q})$.
Fermat's rule and the smooth sum rule ensure
\[
0\in \nabla_y\Omega_{x,q}^f(x) + \nabla_y\left( \frac{1}{p\gamma}\|y-x\|^p\right)_{y=x} + \widehat\partial g(x)
=\nabla_y\Omega_{x,q}^f(x) + \widehat\partial g(x).
\]
Since $\nabla_y\Omega_{x,q}^f(x)=\nabla f(x)$, we obtain
$0\in\nabla f(x)+\widehat\partial g(x) \subseteq \nabla f(x)+\partial g(x)$,
and consequently $x\in\crit(\gf)$.
\\
\ref{prop:fixed_critical:b}
For $x^\star\in\argmin{x\in\E}\gf(x)$, \eqref{eq:model_majorizes_phi_quantitatively} yields $\Model^{p,q}_\gamma(x^\star,y)\ge\gf(y)>\gf(x^\star)=\Model^{p,q}_\gamma(x^\star,x^\star)$ 
for every $y\ne x^\star$,
that is
$\Tprox{\gf}{\gamma}{p,q}(x^\star)=\{x^\star\}$.
Invoking Assertion~\ref{prop:fixed_critical:a} leads to the second inclusion.
Assume now that TEE is finite-valued and let $\ov x$ minimize it. By Proposition~\ref{prop:tensor_basic_comparison}, we have $\fgam{\gf}{p,q}{\gamma}(\ov x)=\inf\gf> -\infty$. Let us choose $y^j$ such that $\Model^{p,q}_\gamma(\ov x,y^j)\le\inf\gf+j^{-1}$. Then \eqref{eq:model_majorizes_phi_quantitatively} implies $\frac{\gamma^{-1}-L}{p}\|y^j-\ov x\|^p\le j^{-1}$, i.e., $y^j\to\ov x$, while $\inf\gf\le\gf(y^j)\le\Model^{p,q}_\gamma(\ov x,y^j)\to\inf\gf$. Lower semicontinuity gives $\gf(\ov x)=\inf\gf$. The reverse inclusion follows from the singleton property above and Proposition~\ref{prop:tensor_basic_comparison}.
\end{proof}

\begin{proposition}[Level-boundedness and coercivity of TEE]\label{prop:level_boundedness_tee}
Let $f\in\maj{q}(p,L)$ and suppose that $\gf$ is high-order prox-bounded with threshold $\tau>0$. If $0<\gamma<\ov\gamma_{L,\tau}$, where $\ov\gamma_{L,\tau} := \frac{1}{L+\tau^{-1}}$, then
$\fgam{\gf}{p,q}{\gamma}$  is level-bounded if and only if $\gf$  is level-bounded.
Equivalently, $\fgam{\gf}{p,q}{\gamma}$ is coercive if and only if $\gf$ is coercive.
\end{proposition}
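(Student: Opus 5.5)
The argument is a sandwich between the high-order Moreau envelope and $\gf$. By \eqref{eq:HOME_lower_tensor} in Theorem~\ref{thm:objective_prox_wellposed}, for $0<\gamma<\ov\gamma_{L,\tau}$ and $\delta_\gamma=\gamma/(1-\gamma L)<\tau$,
\[
\fgam{\gf}{p}{\delta_\gamma}(x)\le \fgam{\gf}{p,q}{\gamma}(x)\le \gf(x)\quad\forall x\in\E .
\]
The upper bound gives one direction immediately. If $\fgam{\gf}{p,q}{\gamma}$ is level-bounded, then $\{x:\gf(x)\le\alpha\}\subseteq\{x:\fgam{\gf}{p,q}{\gamma}(x)\le\alpha\}$, which is bounded. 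Hence $\gf$ is level-bounded.

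For the converse, it suffices to show that level-boundedness of $\gf$ implies level-boundedness of $\fgam{\gf}{p}{\delta}$ for $\delta<\tau$. The lower bound then transfers this to $\fgam{\gf}{p,q}{\gamma}$. Suppose, for contradiction, that $x^k$ satisfies $\|x^k\|\to\infty$ and $\fgam{\gf}{p,q}{\gamma}(x^k)\le\alpha$. Choose $y^k\in\Tprox{\gf}{\gamma}{p,q}(x^k)$, which is nonempty by Theorem~\ref{thm:objective_prox_wellposed}. Estimate \eqref{eq:model_majorizes_phi_quantitatively} gives
\[
\gf(y^k)+\frac{1}{p\delta_\gamma}\|y^k-x^k\|^p\le\alpha .
\]
Fix $\delta'\in(\delta_\gamma,\tau)$ and $\bar x\in\E$. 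Since $\fgam{\gf}{p}{\delta'}(\bar x)$ is finite by Fact~\ref{fact:level-bound+locally uniform}, we have
\[
\gf(y)\ge c-\frac1{p\delta'}\|y-\bar x\|^p\quad\text{for all } y,\qquad c:=\fgam{\gf}{p}{\delta'}(\bar x).
\]
Two cases follow. If $\{y^k\}$ is bounded, then $\gf(y^k)$ is bounded below, since $\gf$ is lsc and proper and hence bounded below on compacts. Then $\|y^k-x^k\|$ is bounded, which contradicts $\|x^k\|\to\infty$. If $\{y^k\}$ is unbounded, pass to a subsequence with $\|y^k\|\to\infty$. Level-boundedness of $\gf$ gives $\gf(y^k)\to+\infty$. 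Combined with $\gf(y^k)\le\alpha-\frac{1}{p\delta_\gamma}\|y^k-x^k\|^p\le\alpha$, this is again a contradiction. In both cases $\fgam{\gf}{p,q}{\gamma}$ is level-bounded.

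The main point to check is the bounded case. There we need $\gf(y^k)$ bounded below, which holds because $\gf$ is lsc and proper. This is simpler than the prox-boundedness estimate, which is only needed as a fallback. The unbounded case uses only that a level-bounded function tends to $+\infty$ along unbounded sequences. Actually, level-boundedness alone suffices here: $\gf(y^k)\le\alpha$ places every $y^k$ in the bounded set $\{\gf\le\alpha\}$, so only the bounded case can occur. Then $\|y^k-x^k\|^p\le p\delta_\gamma(\alpha-\inf_{\{\gf\le\alpha\}}\gf)$, and the infimum is finite by lsc on a bounded set, so $\{x^k\}$ is bounded.

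For the coercivity equivalence, use the standard fact that a function that is finite somewhere is coercive exactly when all its sublevel sets are bounded, i.e., when it is level-bounded in the sense of \cite[Definition~1.8]{Rockafellar09}. Apply this to both $\gf$ and $\fgam{\gf}{p,q}{\gamma}$, the latter being finite-valued by Theorem~\ref{thm:objective_prox_wellposed}.
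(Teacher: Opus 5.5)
Your proposal is correct, and its streamlined final version is essentially the paper's proof. The easy direction uses $\fgam{\gf}{p,q}{\gamma}\le\gf$. For the converse you take $y\in\Tprox{\gf}{\gamma}{p,q}(x)$ and use \eqref{eq:model_majorizes_phi_quantitatively} (equivalently \eqref{eq:tensor_descent_quantitative}) to place $y$ in the bounded set $\{\gf\le\alpha\}$ and to bound $\|y-x\|^p$ by $p\delta_\gamma$ times $\alpha$ minus a finite lower bound of $\gf$, exactly as the paper does with $\gf^*=\min_{\E}\gf$; the initial reduction to the high-order Moreau envelope, the prox-boundedness estimate with $\delta'$, and the two-case analysis are unnecessary and can simply be deleted.
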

\begin{proof}
Since $\fgam{\gf}{p,q}{\gamma}(x)\le\gf(x)$, every sublevel set of $\gf$ is contained in the corresponding sublevel set of TEE. Hence level-boundedness of TEE implies level-boundedness of $\gf$. Conversely, suppose that $\gf$ is level-bounded. Since $\gf$ is proper and lsc, it attains a finite minimum $\gf^*:=\min_{\E}\gf$. Fix $a\ge \gf^*$ and let $\fgam{\gf}{p,q}{\gamma}(x)\le a$. Choosing $y\in\Tprox{\gf}{\gamma}{p,q}(x)$, \eqref{eq:tensor_descent_quantitative} yields
\[
a\ge\fgam{\gf}{p,q}{\gamma}(x) \ge\gf(y)+\frac{\gamma^{-1}-L}{p}\|y-x\|^p,
\]
i.e., $\gf(y)\le a$ and
\[
\|x-y\|^p\le\frac{p(a-\gf^*)}{\gamma^{-1}-L}.
\]
Consequently, $y$ belongs to the bounded sublevel set $\{\gf\leq a\}$ and $x$ remains at a uniformly bounded distance from $y$. Therefore, the sublevel set $\{\fgam{\gf}{p,q}{\gamma}\leq a\}$ is bounded. If $a<\gf^*$, it is empty. Since coercivity is equivalent to level-boundedness, the corresponding coercivity
equivalence follows immediately.
\end{proof}

\begin{remark}
If $\gf$ is level-bounded, then every exact tensor trajectory $x^{k+1}\in\Tprox{\gf}{\gamma}{p,q}(x^k)$ is bounded. Indeed,
$\gf(x^{k+1})\leq \fgam{\gf}{p,q}{\gamma}(x^k) \leq \gf(x^k)$,
i.e., $\{\gf(x^k)\}_{k\in\Nz}$ is nonincreasing, and all iterates remain in the
bounded sublevel set $\{x:\gf(x)\leq\gf(x^0)\}$.
\end{remark}

\subsection{Single-valuedness of TOP and differentiability of TEE}
In this section, we study the differentiability of TEE. We first characterize its Fr\'echet differentiability in terms of the gradients associated with solutions of the tensor subproblem, showing in particular that single-valuedness of TOP implies differentiability of TEE. We then
provide sufficient conditions, based on $p$-calmness and prox-regularity, for local single-valuedness of TOP and hence local smoothness of TEE.

We first reduce the tensor subproblem locally to minimization over a fixed compact set, which will be used in the differentiability analysis of TEE.
\begin{lemma}[Local compact reduction]\label{lem:local_compact_reduction}
Fix $\ov x\in\E$ and $\gamma>0$. Suppose that $\Tprox{\gf}{\gamma}{p,q}(x)$ is nonempty for $x$ near $\ov x$ and that $\Model^{p,q}_\gamma(x,\cdot)$ is level-bounded locally uniformly in $x$ around $\ov x$. Then there exist a neighborhood $U$ of $\ov x$ and a compact set $K\subset\E$ such that
$\Tprox{\gf}{\gamma}{p,q}(x)\subseteq K$, for each $x\in U$ and
\begin{equation}\label{eq:local_compact_reduced_value}
\fgam{\gf}{p,q}{\gamma}(x) = \mint{y\in K}\Model^{p,q}_\gamma(x,y) \quad \forall x\in U.
\end{equation}
\end{lemma}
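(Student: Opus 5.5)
The plan is to use the definition of level-boundedness in $y$ locally uniformly in $x$ (see \cite[Definition~1.16]{Rockafellar09}). For every $\alpha\in\R$ it provides a neighborhood $V_\alpha$ of $\ov x$ and a bounded set $B_\alpha$ such that $\{y:\Model^{p,q}_\gamma(x,y)\le\alpha\}\subseteq B_\alpha$ for all $x\in V_\alpha$. The task is to select a single level $\alpha$ that dominates $\fgam{\gf}{p,q}{\gamma}(x)$ uniformly near $\ov x$.

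\emph{Uniform upper bound.} Since $\Tprox{\gf}{\gamma}{p,q}(\ov x)\neq\emptyset$, choose $\ov y\in\Tprox{\gf}{\gamma}{p,q}(\ov x)$. Then $g(\ov y)<+\infty$, because otherwise the model would be identically $+\infty$; but under level-boundedness a properly defined nonempty argmin should have finite value, and if the model were $\equiv+\infty$ every point would be a minimizer, contradicting level-boundedness. Next, $x\mapsto \Omega^f_{x,q}(\ov y)+\frac{1}{p\gamma}\|\ov y-x\|^p$ is continuous, because $f\in C^q$ makes the Taylor coefficients continuous in $x$. Hence on some neighborhood $U_0$ of $\ov x$ we have
\[
\fgam{\gf}{p,q}{\gamma}(x)\le \Model^{p,q}_\gamma(x,\ov y)\le \Model^{p,q}_\gamma(\ov x,\ov y)+1=:\alpha .
\]

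\emph{Compact set.} Let $U:=U_0\cap V_\alpha$, and set $K:=\operatorname{cl}B_\alpha$, which is compact because $\E$ is finite-dimensional. Take $x\in U$ and $y\in\Tprox{\gf}{\gamma}{p,q}(x)$, which is nonempty by assumption after shrinking $U$. Then $\Model^{p,q}_\gamma(x,y)=\fgam{\gf}{p,q}{\gamma}(x)\le\alpha$, so $y\in B_\alpha\subseteq K$. This gives $\Tprox{\gf}{\gamma}{p,q}(x)\subseteq K$.

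\emph{The identity.} For $x\in U$, the inequality $\fgam{\gf}{p,q}{\gamma}(x)\le\inf_{y\in K}\Model^{p,q}_\gamma(x,y)$ is trivial since $K\subseteq\E$. The reverse inequality holds, and the infimum over $K$ is attained, because a minimizer $y\in\Tprox{\gf}{\gamma}{p,q}(x)$ lies in $K$ and achieves $\fgam{\gf}{p,q}{\gamma}(x)$. This yields \eqref{eq:local_compact_reduced_value} with ``min''.

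The main subtlety is the uniform level $\alpha$: finiteness of the model value at $(\ov x,\ov y)$ together with continuity in $x$ is exactly what allows the locally uniform level-boundedness to be applied at one fixed level.
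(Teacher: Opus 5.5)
Your argument is correct and is essentially the paper's proof. You fix a point where the model is finite, use continuity in $x$ to get a single level $\alpha$ bounding the envelope near $\ov x$, apply locally uniform level-boundedness at that level, and take $K$ as the closure of the resulting bounded set; you also spell out the identity with $\min$, which the paper leaves implicit.

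The one slip is your justification that $g(\ov y)<+\infty$. Level-boundedness does not rule out $\Model^{p,q}_\gamma(\ov x,\cdot)\equiv+\infty$, since all real level sets would then be empty. The correct reason is properness of $g$: $\Model^{p,q}_\gamma(\ov x,\ov y)\le\Model^{p,q}_\gamma(\ov x,y_0)<+\infty$ for any $y_0\in\dom g$. The paper avoids the issue by using such a $y_0$ directly in place of $\ov y$.
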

\begin{proof}
Choose $y_0\in\dom g$. Since $x\mapsto\Model^{p,q}_\gamma(x,y_0)$ is continuous, there exists a neighborhood $U_1$ of $\ov x$ and $\alpha\in\R$ such that
$\Model^{p,q}_\gamma(x,y_0)\le\alpha$, for each $x\in U_1$.
Hence, for every
$y\in\Tprox{\gf}{\gamma}{p,q}(x)$, we have
$\Model^{p,q}_\gamma(x,y)\le \Model^{p,q}_\gamma(x,y_0) \le\alpha$.
By local uniform level-boundedness, after possibly shrinking the neighborhood to some $U\subseteq U_1$, the set
\[
B:= \bigcup_{x\in U}\left\{y\in\E: \Model^{p,q}_\gamma(x,y)\le\alpha \right\},
\]
is bounded. Hence, $K:=\bs{\rm cl}(B)$ is compact and 
$\Tprox{\gf}{\gamma}{p,q}(x)\subseteq K$, for each $x\in U$.
\end{proof}

Now, let us assume that $f\in C^{q+1}(\E)$ and write
\begin{equation}\label{eq:S_smooth_part}
S_{q,p,\gamma}(x,y) := \Omega_{x,q}^f(y)+\frac1{p\gamma}\|y-x\|^p, 
\end{equation}
i.e., $\Model^{p,q}_\gamma(x,y)=S_{q,p,\gamma}(x,y)+g(y)$.
A direct differentiation with respect to the base point implies
\begin{equation}\label{eq:base_gradient}
\nabla_xS_{q,p,\gamma}(x,y) = \frac1{q!}D^{q+1}f(x)[\,\cdot\,,y-x,\ldots,y-x] - \frac1\gamma\|y-x\|^{p-2}B(y-x).
\end{equation}
The terms arising from differentiating the Taylor coefficients and increments cancel successively, explaining why the differentiability analysis requires $f\in C^{q+1}(\E)$.

The next lemma is the standard Danskin formula specialized in the local compact reduction in Lemma~\ref{lem:local_compact_reduction}. We include a short proof because $g$ may be extended-valued and only lsc; see \cite[Chapter~2]{Clarke90} and \cite[Chapter~10]{Rockafellar09}.
\begin{lemma}[Local Danskin formula]\label{fact:danskin_tensor}
Let $f\in C^{q+1}(\E)$ and fix $\ov x\in\E$. Suppose that the assumptions of Lemma~\ref{lem:local_compact_reduction} hold around $\ov x$. Then $\fgam{\gf}{p,q}{\gamma}$ is locally Lipschitz around $\ov x$ and is Hadamard directionally differentiable at $\ov x$, with
\begin{equation}\label{eq:danskin_directional}
(\fgam{\gf}{p,q}{\gamma})'(\ov x;d) = \mint{y\in\Tprox{\gf}{\gamma}{p,q}(\ov x)}
\left\langle\nabla_xS_{q,p,\gamma}(\ov x,y),d\right\rangle \quad \forall d\in\E.
\end{equation}
\end{lemma}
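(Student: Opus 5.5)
We use Lemma~\ref{lem:local_compact_reduction} to obtain a neighborhood $U$ of $\ov x$ and a compact set $K$ with $\Tprox{\gf}{\gamma}{p,q}(x)\subseteq K$ and $\fgam{\gf}{p,q}{\gamma}(x)=\min_{y\in K}\Model^{p,q}_\gamma(x,y)$ for all $x\in U$. Set $K_0:=K\cap\dom g$. On $K_0$ we have $\Model^{p,q}_\gamma(x,y)=S_{q,p,\gamma}(x,y)+g(y)$, and $g(y)$ does not depend on $x$. Since $f\in C^{q+1}(\E)$ and $p>1$, the map $(x,y)\mapsto S_{q,p,\gamma}(x,y)$ is $C^1$ with $\nabla_x S$ given by \eqref{eq:base_gradient}. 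Shrinking $U$ to a compact convex neighborhood, $\nabla_xS$ is bounded by some $M$ on $U\times K$, uniformly continuous there, and each $x\mapsto\Model^{p,q}_\gamma(x,y)$ is $M$-Lipschitz on $U$ uniformly in $y\in K_0$. An infimum of uniformly Lipschitz functions is Lipschitz, which gives the local Lipschitz property.

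For the upper bound on the directional derivative, fix $d$, $t_k\downarrow0$, $d^k\to d$ and $y\in\Tprox{\gf}{\gamma}{p,q}(\ov x)$. Then
\[
\fgam{\gf}{p,q}{\gamma}(\ov x+t_kd^k)-\fgam{\gf}{p,q}{\gamma}(\ov x)\le S_{q,p,\gamma}(\ov x+t_kd^k,y)-S_{q,p,\gamma}(\ov x,y).
\]
Dividing by $t_k$ and using the mean value theorem, the $\limsup$ of the quotients is at most $\ip{\nabla_xS(\ov x,y)}{d}$. Minimizing over $y$ gives the $\le$ half of \eqref{eq:danskin_directional}.

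For the lower bound, choose $y^k\in\Tprox{\gf}{\gamma}{p,q}(\ov x+t_kd^k)\subseteq K_0$. Then
\[
\fgam{\gf}{p,q}{\gamma}(\ov x+t_kd^k)-\fgam{\gf}{p,q}{\gamma}(\ov x)\ge S_{q,p,\gamma}(\ov x+t_kd^k,y^k)-S_{q,p,\gamma}(\ov x,y^k)=t_k\ip{\nabla_xS(\xi^k,y^k)}{d^k},
\]
with $\xi^k$ on the segment. Pass to a subsequence realizing the $\liminf$ of the quotients, with $y^k\to\bar y\in K$. The key point is to show $\bar y\in\Tprox{\gf}{\gamma}{p,q}(\ov x)$. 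For this we use the lower semicontinuity of $\Model^{p,q}_\gamma$ together with the continuity of $\fgam{\gf}{p,q}{\gamma}$ just established:
\[
\Model^{p,q}_\gamma(\ov x,\bar y)\le\liminf\Model^{p,q}_\gamma(\ov x+t_kd^k,y^k)=\lim\fgam{\gf}{p,q}{\gamma}(\ov x+t_kd^k)=\fgam{\gf}{p,q}{\gamma}(\ov x).
\]
Continuity of $\nabla_xS$ then makes the $\liminf$ at least $\ip{\nabla_xS(\ov x,\bar y)}{d}$, which is at least the minimum in \eqref{eq:danskin_directional}. That minimum is attained because $\Tprox{\gf}{\gamma}{p,q}(\ov x)$ is compact: it is a closed subset of $K$, being a level set of an lsc function.

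Combining the two bounds, the Hadamard directional derivative exists and equals \eqref{eq:danskin_directional}. The main obstacle is this outer-semicontinuity step, because $g$ is only lsc and extended-valued. That is why we restrict to $K_0$ and invoke the continuity of the value function from the Lipschitz part, rather than continuity of $g$.
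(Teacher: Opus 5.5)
Your proof is correct and follows essentially the same route as the paper: compact reduction; a Lipschitz bound on $S_{q,p,\gamma}(\cdot,y)$ that is uniform in $y\in K$, giving local Lipschitz continuity of $\fgam{\gf}{p,q}{\gamma}$; the upper estimate from a fixed minimizer at $\ov x$; and the lower estimate from minimizers $y^k$ at the perturbed points, whose cluster point is placed in $\Tprox{\gf}{\gamma}{p,q}(\ov x)$ via lower semicontinuity and continuity of the envelope. The only cosmetic difference is that you use joint lower semicontinuity of $\Model^{p,q}_\gamma$ along $(\ov x+t_kd^k,y^k)$, whereas the paper writes $\Model^{p,q}_\gamma(\ov x,y^k)=\fgam{\gf}{p,q}{\gamma}(x^k)-\bigl(S_{q,p,\gamma}(x^k,y^k)-S_{q,p,\gamma}(\ov x,y^k)\bigr)$ and uses lower semicontinuity in $y$ alone; your justification that the minimum in \eqref{eq:danskin_directional} is attained is a small point the paper leaves implicit.
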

\begin{proof}
Let $U$ and $K$ be given by Lemma~\ref{lem:local_compact_reduction}. After shrinking $U$ if necessary, we may assume that $\bs{\rm cl}(U)$ is compact. Since $(x,y)\mapsto\nabla_xS_{q,p,\gamma}(x,y)$ is
continuous, there exists $C>0$ such that $\|\nabla_xS_{q,p,\gamma}(x,y)\|_*\le C$, 
for $(x,y)\in\bs{\rm cl}(U)\times K$.
Hence, uniformly for $y\in K$,
\[
|S_{q,p,\gamma}(x,y)-S_{q,p,\gamma}(z,y)| \le C\|x-z\|,
\]
for $x,z$ sufficiently close to $\ov x$. Using \eqref{eq:local_compact_reduced_value} gives the local Lipschitz continuity of TEE.
Fix $d\in\E$. To establish Hadamard directional differentiability, let $t_k\downarrow 0$ and $d^k\to d$, and set $x^k:=\ov x+t_kd^k$. For any $y\in\Tprox{\gf}{\gamma}{p,q}(\ov x)$, 
\[
\frac{\fgam{\gf}{p,q}{\gamma}(x^k)-\fgam{\gf}{p,q}{\gamma}(\ov x)}{t_k}
\le \frac{S_{q,p,\gamma}(x^k,y)-S_{q,p,\gamma}(\ov x,y)}{t_k}.
\]
Taking the upper limit and then minimizing over $\Tprox{\gf}{\gamma}{p,q}(\ov x)$ lead to 
\[
\limsupt{k\to\infty}\frac{\fgam{\gf}{p,q}{\gamma}(x^k)-\fgam{\gf}{p,q}{\gamma}(\ov x)}{t_k}
\leq \mint{y\in\Tprox{\gf}{\gamma}{p,q}(\ov x)}\ip{\nabla_xS_{q,p,\gamma}(\ov x,y)}{d}.
\]
For the reverse inequality, choose $y^k\in\Tprox{\gf}{\gamma}{p,q}(x^k)$.
Since $y^k\in K$, after passing to a subsequence and taking the lower limit, we may assume $y^k\to\ov y\in K$. Moreover,
\[
\Model_\gamma^{p,q}(\ov x,y^k) = \fgam{\gf}{p,q}{\gamma}(x^k)-\big(S_{q,p,\gamma}(x^k,y^k)-S_{q,p,\gamma}(\ov x,y^k)\big).
\]
The first term tends to $\fgam{\gf}{p,q}{\gamma}(\ov x)$ by local Lipschitz continuity, while
the parenthesized term is $O(t_k)$, i.e., $\Model^{p,q}_\gamma(\ov x,y^k)\to\fgam{\gf}{p,q}{\gamma}(\ov x)$.
Lower semicontinuity yields $\ov y\in\Tprox{\gf}{\gamma}{p,q}(\ov x)$. Using the optimality of $y^k$ at $x^k$ and the definition of the envelope at $\ov x$, we obtain
\[
\frac{\fgam{\gf}{p,q}{\gamma}(x^k)-\fgam{\gf}{p,q}{\gamma}(\ov x)}{t_k}
\ge \frac{S_{q,p,\gamma}(x^k,y^k)-S_{q,p,\gamma}(\ov x,y^k)}{t_k}.
\]
The right-hand side converges to
$\langle\nabla_xS_{q,p,\gamma}(\ov x,\ov y),d\rangle$,
which is at least the minimum in \eqref{eq:danskin_directional}. As such, the upper and lower limits coincide, proving the claimed Hadamard directional derivative.
\end{proof}

For $x\in\E$, let us define
\begin{equation}\label{eq:active_gradient_set}
\mathcal A_\gamma^{p,q}(x) := \left\{\nabla_xS_{q,p,\gamma}(x,y) : y\in\Tprox{\gf}{\gamma}{p,q}(x)
\right\}.
\end{equation}

\begin{theorem}[Differentiability criterion]\label{thm:active_gradient_differentiability}
Let the assumptions of Lemma~\ref{fact:danskin_tensor} hold. Then $\fgam{\gf}{p,q}{\gamma}$ is Fr\'echet differentiable at $\ov x$ if and only if $\mathcal A_\gamma^{p,q}(\ov x)$ is a singleton. If
$\mathcal A_\gamma^{p,q}(\ov x)=\{\zeta\}$ then $\nabla\fgam{\gf}{p,q}{\gamma}(\ov x)=\zeta$, and consequently, for every $y\in\Tprox{\gf}{\gamma}{p,q}(\ov x)$, we have
$\nabla\fgam{\gf}{p,q}{\gamma}(\ov x) = \nabla_xS_{q,p,\gamma}(\ov x,y)$, i.e.,
\begin{equation}\label{eq:active_gradient_identity}
\nabla\fgam{\gf}{p,q}{\gamma}(\ov x) = \frac1{q!}D^{q+1}f(\ov x)[\,\cdot\,,y-\ov x,\ldots,y-\ov x]
- \frac1\gamma\|y-\ov x\|^{p-2}B(y-\ov x).
\end{equation}
\end{theorem}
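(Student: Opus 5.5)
The argument rests on the Danskin formula of Lemma~\ref{fact:danskin_tensor}. It shows that $\fgam{\gf}{p,q}{\gamma}$ is locally Lipschitz around $\ov x$ and Hadamard directionally differentiable there, with
\[
(\fgam{\gf}{p,q}{\gamma})'(\ov x;d)=\mint{\zeta\in\mathcal A_\gamma^{p,q}(\ov x)}\ip{\zeta}{d}=:\sigma(d).
\]
The set $\mathcal A_\gamma^{p,q}(\ov x)$ is compact and nonempty: it is the continuous image of the nonempty compact set $\Tprox{\gf}{\gamma}{p,q}(\ov x)$. By Theorem~\ref{thm:objective_prox_wellposed} or Lemma~\ref{lem:local_compact_reduction}, this prox set is closed and contained in $K$. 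The function $\sigma$ is positively homogeneous and concave, since it is a minimum of linear functions. It equals $-\sigma_{\mathcal A}(-d)$, where $\sigma_{\mathcal A}$ denotes the support function of $\mathcal A_\gamma^{p,q}(\ov x)$.

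\emph{Sufficiency.} Suppose $\mathcal A_\gamma^{p,q}(\ov x)=\{\zeta\}$. Then $\sigma(d)=\ip{\zeta}{d}$ is linear. In finite dimensions, Hadamard directional differentiability with a linear directional derivative is equivalent to Fréchet differentiability. To verify this, argue by contradiction. Suppose there are $h^k\to0$ with
\[
\abs{\fgam{\gf}{p,q}{\gamma}(\ov x+h^k)-\fgam{\gf}{p,q}{\gamma}(\ov x)-\ip{\zeta}{h^k}}\ge\epsilon\norm{h^k}.
\]
Write $h^k=t_kd^k$ with $t_k=\norm{h^k}$ and $\norm{d^k}=1$. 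Pass to a subsequence along which $d^k\to d$. The Hadamard limit then gives a contradiction. The local Lipschitz property makes this step clean, but it is not even needed, since Hadamard differentiability already allows varying directions. Hence $\nabla\fgam{\gf}{p,q}{\gamma}(\ov x)=\zeta$. Formula \eqref{eq:active_gradient_identity} then follows by substituting any $y\in\Tprox{\gf}{\gamma}{p,q}(\ov x)$ into \eqref{eq:base_gradient}, since every such $y$ produces the same element $\zeta$.

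\emph{Necessity.} This is the main step, though it is short. Suppose $\fgam{\gf}{p,q}{\gamma}$ is Fréchet differentiable at $\ov x$ with gradient $v$. Then $\sigma(d)=\ip{v}{d}$ for all $d$, and so $\sigma(-d)=-\sigma(d)$. Unwinding the definitions, this says
\[
\max_{\zeta\in\mathcal A}\ip{\zeta}{d}=\min_{\zeta\in\mathcal A}\ip{\zeta}{d}\quad\text{for every }d\in\E .
\]
So every linear functional is constant on $\mathcal A_\gamma^{p,q}(\ov x)$. Taking $d$ along the difference of two elements (via $B^{-1}$, to identify $\E^*$ with $\E$) shows that any two elements coincide. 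Hence $\mathcal A_\gamma^{p,q}(\ov x)=\{v\}$. The only care needed is the pairing between $\E^*$ and $\E$. Choosing $d=B^{-1}(\zeta_1-\zeta_2)$ gives
\[
\ip{\zeta_1-\zeta_2}{d}=\norm{\zeta_1-\zeta_2}_*^2=0 ,
\]
so $\zeta_1=\zeta_2$.
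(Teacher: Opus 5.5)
Your proposal is correct and follows essentially the paper's argument: both directions rest on the Danskin formula of Lemma~\ref{fact:danskin_tensor}, with necessity obtained by testing the min-formula against $d$ and $-d$ (you phrase this as $\max=\min$ over $\mathcal A_\gamma^{p,q}(\ov x)$, whereas the paper compares each $\zeta$ directly with the gradient), and sufficiency obtained from linearity of the Hadamard directional derivative in finite dimensions. You are also right that the local Lipschitz property is not needed for the last step, since the Hadamard limit already allows varying directions.
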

\begin{proof}
If $\fgam{\gf}{p,q}{\gamma}$ is Fr\'echet differentiable at $\ov x$, then Lemma~\ref{fact:danskin_tensor} implies
\[
\langle\nabla\fgam{\gf}{p,q}{\gamma}(\ov x),d\rangle
= \mint{\zeta\in\mathcal A_\gamma^{p,q}(\ov x)}\langle \zeta,d\rangle
\quad \forall d\in\E.
\]
Fixing $\zeta\in\mathcal A_\gamma^{p,q}(x)$, we come to
$\langle\nabla\fgam{\gf}{p,q}{\gamma}(\ov x),d\rangle\leq\langle \zeta,d\rangle$. 
Applying the same inequality to $-d$ gives the reverse inequality, i.e., 
\[
\langle \zeta-\nabla\fgam{\gf}{p,q}{\gamma}(\ov x),d\rangle=0 \quad \forall d\in\E.
\]
As a consequence, every element of $\mathcal A^{p,q}_\gamma(\ov x)$ equals $\nabla\fgam{\gf}{p,q}{\gamma}(\ov x)$, and consequently $\mathcal A_\gamma^{p,q}(x)$ is a singleton.
Conversely, suppose that $\mathcal A_\gamma^{p,q}(\ov x)=\{\zeta\}$. Now, Lemma~\ref{fact:danskin_tensor} implies
\[
(\fgam{\gf}{p,q}{\gamma})'(x;d) = \ip{\zeta}{d} \quad \forall d\in\E,
\]
which implies that the Hadamard directional derivative is linear. Since TEE is locally Lipschitz and $\E$ is finite-dimensional, this implies Fr\'echet differentiability at $x$, with $\nabla\fgam{\gf}{p,q}{\gamma}(\ov x)=\zeta$. 
\end{proof}

Theorem~\ref{thm:active_gradient_differentiability} shows that the differentiability of TEE is characterized by the equality of the base-point gradients associated with all tensor solutions. In particular, single-valuedness of TOP is sufficient but need not be assumed a priori.

\begin{corollary}[Single-valued TOP and differentiability of TEE]
\label{cor:single_valued_differentiability}
Suppose that $f\in C^{q+1}(\E)$ and that the local compact-reduction assumptions of Lemma~\ref{lem:local_compact_reduction} hold at every point of an open set $U\subset\E$. If $\Tprox{\gf}{\gamma}{p,q}$ is single-valued on $U$, then $\Tprox{\gf}{\gamma}{p,q}$ is continuous on $U$ and
$\fgam{\gf}{p,q}{\gamma}\in C^1(U)$.
\end{corollary}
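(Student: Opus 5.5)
The plan is to obtain continuity of TOP from outer semicontinuity plus local boundedness, and then to combine it with Theorem~\ref{thm:active_gradient_differentiability} and the explicit gradient formula \eqref{eq:active_gradient_identity}.

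\emph{Continuity of TOP.} Write $T(x)$ for the unique element of $\Tprox{\gf}{\gamma}{p,q}(x)$, $x\in U$. Fix $\ov x\in U$ and a sequence $x^k\to\ov x$ in $U$. Lemma~\ref{lem:local_compact_reduction} gives a neighborhood $U_0$ of $\ov x$ and a compact set $K$ with $T(x)\in K$ for all $x\in U_0$. Hence $\{T(x^k)\}$ is eventually bounded.

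Let $\ov y$ be any cluster point, say $T(x^{k_j})\to\ov y$. We show $\ov y\in\Tprox{\gf}{\gamma}{p,q}(\ov x)$ by repeating the argument in the proof of Lemma~\ref{fact:danskin_tensor}:
\[
\Model^{p,q}_\gamma(\ov x,T(x^{k_j}))=\fgam{\gf}{p,q}{\gamma}(x^{k_j})-\big(S_{q,p,\gamma}(x^{k_j},T(x^{k_j}))-S_{q,p,\gamma}(\ov x,T(x^{k_j}))\big).
\]
The first term converges to $\fgam{\gf}{p,q}{\gamma}(\ov x)$ because TEE is locally Lipschitz (Lemma~\ref{fact:danskin_tensor}). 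The bracket tends to $0$ because $S_{q,p,\gamma}$ is uniformly continuous on $\bs{\rm cl}(U_0)\times K$. Lower semicontinuity of $\Model^{p,q}_\gamma(\ov x,\cdot)$ then gives $\Model^{p,q}_\gamma(\ov x,\ov y)\le\fgam{\gf}{p,q}{\gamma}(\ov x)$, so $\ov y$ is a minimizer.

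Single-valuedness forces $\ov y=T(\ov x)$. A bounded sequence whose only cluster point is $T(\ov x)$ converges to it, so $T$ is continuous at $\ov x$.

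\emph{$C^1$ regularity of TEE.} For every $x\in U$ the set $\mathcal A_\gamma^{p,q}(x)=\{\nabla_xS_{q,p,\gamma}(x,T(x))\}$ is a singleton. Theorem~\ref{thm:active_gradient_differentiability} therefore gives Fr\'echet differentiability of $\fgam{\gf}{p,q}{\gamma}$ at every $x\in U$, with
\[
\nabla\fgam{\gf}{p,q}{\gamma}(x)=\nabla_xS_{q,p,\gamma}(x,T(x)).
\]
By \eqref{eq:base_gradient}, the map $(x,y)\mapsto\nabla_xS_{q,p,\gamma}(x,y)$ is continuous: $D^{q+1}f$ is continuous since $f\in C^{q+1}(\E)$, and $u\mapsto\|u\|^{p-2}Bu$ is continuous on $\E$ for $p>1$, taking the value $0$ at $u=0$. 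Composing this map with the continuous map $x\mapsto(x,T(x))$ shows that $\nabla\fgam{\gf}{p,q}{\gamma}$ is continuous on $U$, i.e., $\fgam{\gf}{p,q}{\gamma}\in C^1(U)$.

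The main obstacle is the closedness step in the continuity argument. Since $g$ is only lsc and extended-valued, we cannot pass to the limit in $g(T(x^k))$ directly. The argument avoids this by using continuity of the envelope value together with lower semicontinuity of the model in $y$, exactly as in the Danskin lemma. One should also check that the compact set and neighborhood from Lemma~\ref{lem:local_compact_reduction} are taken inside $U$ around $\ov x$, which holds because $U$ is open.
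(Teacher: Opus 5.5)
Your proposal is correct and follows the paper's proof essentially step by step. You get local boundedness of TOP from Lemma~\ref{lem:local_compact_reduction}, show that cluster points are minimizers via the lower-semicontinuity argument from the Danskin lemma, and use single-valuedness to obtain convergence; then Theorem~\ref{thm:active_gradient_differentiability}, the gradient formula \eqref{eq:active_gradient_identity}, and continuity of $\nabla_x S_{q,p,\gamma}$ give the $C^1$ conclusion, and you usefully write out the closedness step that the paper only cites.
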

\begin{proof}
Fix $x\in U$ and let $x^k\to x$. By Lemma~\ref{lem:local_compact_reduction}, the points $y^k:=\Tprox{\gf}{\gamma}{p,q}(x^k)$ lie, for large $k$, in a common compact set. Every cluster point $\ov y$ satisfies $\ov y\in\Tprox{\gf}{\gamma}{p,q}(x)$ by the same lower-semicontinuity argument used in Lemma~\ref{fact:danskin_tensor}. Single-valuedness therefore forces $y^k\to\Tprox{\gf}{\gamma}{p,q}(x)$, so TOP is continuous on $U$. Theorem~\ref{thm:active_gradient_differentiability} gives the differentiability of TEE, and \eqref{eq:active_gradient_identity}, together with the continuity of TOP and $\nabla_xS_{q,p,\gamma}$, yields continuity of the gradient.
\end{proof}

We next give conditions that ensure the local single-valuedness required in Corollary~\ref{cor:single_valued_differentiability}.
Following \cite{Kabganidiff}, a point $\ov x\in\dom\gf$ is called $p$-calm for $\gf$ with constant $M>0$ if
\begin{equation}\label{eq:p_calm_tee}
\gf(y)+M\|y-\ov x\|^p>\gf(\ov x) \quad \forall\,y\neq\ov x.
\end{equation}
For $r\ge 2$, the function $\gf$ is said to be $r$-prox-regular at $\ov x$ for $\ov v\in\partial\gf(\ov x)$ if there exist $\varepsilon>0$ and $\rho\ge 0$ such that
\begin{equation}\label{eq:r_prox_regular_tee}
    \gf(z) \ge \gf(y)+\langle v,z-y\rangle -\frac{\rho}{2}\|z-y\|^r,
\end{equation}
for $y,z\in \mb(\ov x;\varepsilon)$ whenever $v\in\partial\gf(y)$, $\|v-\ov v\|_*<\varepsilon$, and $\gf(y)<\gf(\ov x)+\varepsilon$.
For $r=2$, this reduces to the usual prox-regularity condition \cite{Rockafellar09}.
Notice that every $p$-calm point $\ov x$ satisfies $0\in\widehat\partial\gf(\ov x)\subseteq\partial\gf(\ov x)$.

\begin{theorem}[Differentiability around $p$-calm points]\label{thm:calm_prox_regular_tee}
Let $f\in C^{q+1}(\E)\cap\mathcal M^q(p,L)$ and let $\ov x$ be a $p$-calm point of $\gf$ with constant $M>0$.
\begin{enumerate}[label=(\textbf{\alph*}), font=\normalfont\bfseries, leftmargin=0.7cm]
\item \label{thm:calm_prox_regular_tee:a} For every $0<\gamma<\frac{1}{L+pM}$, one has
$\Tprox{\gf}{\gamma}{p,q}(\ov x)=\{\ov x\}$ and $\mathcal A_\gamma^{p,q}(\ov x)=\{0\}$. Consequently, $\fgam{\gf}{p,q}{\gamma}$ is Fr\'echet differentiable
at $\ov x$ and $\nabla\fgam{\gf}{p,q}{\gamma}(\ov x)=0$. 

\item \label{thm:calm_prox_regular_tee:b} Suppose, in addition, that $1<p\le q+1$ and that $\gf$ is $r$-prox-regular at $\ov x$ for $0\in\partial\gf(\ov x)$ for some
$r\ge\max\{2,p\}$. Then there exists $\ov\gamma>0$ such that, for every $\gamma\in(0,\ov\gamma)$, there is a neighborhood $U_\gamma$ of $\ov x$ on which $\Tprox{\gf}{\gamma}{p,q}$ is single-valued and continuous. Consequently, $\fgam{\gf}{p,q}{\gamma}\in C^1(U_\gamma)$.
\end{enumerate}
\end{theorem}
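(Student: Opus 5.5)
For part \ref{thm:calm_prox_regular_tee:a}, I will combine the quantitative majorization \eqref{eq:model_majorizes_phi_quantitatively} with $p$-calmness. Fix $0<\gamma<\frac{1}{L+pM}$, so that $\frac{\gamma^{-1}-L}{p}>M$. For every $y\neq\ov x$,
\[
\Model^{p,q}_\gamma(\ov x,y)\ge\gf(y)+\tfrac{\gamma^{-1}-L}{p}\|y-\ov x\|^p\ge \gf(y)+M\|y-\ov x\|^p>\gf(\ov x)=\Model^{p,q}_\gamma(\ov x,\ov x).
\]
Hence $\Tprox{\gf}{\gamma}{p,q}(\ov x)=\{\ov x\}$. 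By \eqref{eq:base_gradient}, $\nabla_xS_{q,p,\gamma}(\ov x,\ov x)=0$, since both terms vanish when $y=\ov x$ (here $p>1$). Therefore $\mathcal A^{p,q}_\gamma(\ov x)=\{0\}$.

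To invoke Theorem~\ref{thm:active_gradient_differentiability}, I still need the hypotheses of Lemma~\ref{lem:local_compact_reduction} near $\ov x$. Calmness implies $\gf(y)\ge\gf(\ov x)-M\|y-\ov x\|^p$. Hence $\gf$ is high-order prox-bounded with threshold $\tau\ge (pM)^{-1}$, and $\gamma<\frac1{L+pM}\le\ov\gamma_{L,\tau}$. Theorem~\ref{thm:objective_prox_wellposed} then supplies nonemptiness of TOP and locally uniform level-boundedness. Differentiability with $\nabla\fgam{\gf}{p,q}{\gamma}(\ov x)=0$ follows.

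For part \ref{thm:calm_prox_regular_tee:b}, the plan is to show that, for small $\gamma$ and $x$ near $\ov x$, every element of $\Tprox{\gf}{\gamma}{p,q}(x)$ lies in a small ball around $\ov x$ and is unique. I will carry this out in four steps.

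\textbf{Step 1 (localization).} By part \ref{thm:calm_prox_regular_tee:a} and the outer semicontinuity in Theorem~\ref{thm:objective_prox_wellposed}\ref{thm:objective_prox_wellposed:f}, any $y\in\Tprox{\gf}{\gamma}{p,q}(x)$ is close to $\ov x$ when $x$ is close to $\ov x$. The values $\gf(y)$ are also close to $\gf(\ov x)$, using continuity of TEE and \eqref{eq:tensor_descent_quantitative}.

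\textbf{Step 2 (optimality condition).} Fermat's rule gives
\[
v:=-\nabla_y\Omega^f_{x,q}(y)+\nabla f(y)-\gamma^{-1}\|y-x\|^{p-2}B(y-x)\in\partial\gf(y).
\]
Its size is small for $x,y$ near $\ov x$, so $r$-prox-regularity \eqref{eq:r_prox_regular_tee} applies at $y$ with this $v$.

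\textbf{Step 3 (uniqueness).} Take two minimizers $y_1,y_2$ of $\Model^{p,q}_\gamma(x,\cdot)$, with subgradients $v_1,v_2$. Adding the two prox-regularity inequalities gives the hypomonotonicity estimate
\[
\langle v_1-v_2,y_1-y_2\rangle\ge-\rho\|y_1-y_2\|^r.
\]
Next I write $v_i$ through the smooth part. The difference $\nabla f-\nabla_y\Omega^f_{x,q}$ is a Taylor remainder of $\nabla f$, whose Lipschitz modulus near $\ov x$ is of order $\|y-x\|^{q-1}$, hence small. The regularizer term $\gamma^{-1}\|u\|^{p-2}Bu$ is strongly monotone only of power type, with constant of order $\gamma^{-1}$. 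For $p\ge2$ it is $p$-uniformly monotone with modulus $\sim\gamma^{-1}\|y_1-y_2\|^p$. For $1<p<2$ I would use its strong monotonicity on bounded sets. In either case this term must dominate $\rho\|y_1-y_2\|^r$; the condition $r\ge\max\{2,p\}$ makes this hold for small distances and small $\gamma$, forcing $y_1=y_2$.

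\textbf{Step 4 (conclusion).} Single-valuedness on $U_\gamma$ together with Corollary~\ref{cor:single_valued_differentiability} yields continuity of TOP and $\fgam{\gf}{p,q}{\gamma}\in C^1(U_\gamma)$.

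The main obstacle is Step 3: balancing the Taylor-remainder perturbation and the prox-regularity defect against the only power-type monotonicity of $\|\cdot\|^{p-2}B(\cdot)$. The ranges $p\le q+1$ and $r\ge\max\{2,p\}$ exist precisely to make these exponents compare correctly. I would first try the case $p\ge 2$ using the standard inequality $\langle\|a\|^{p-2}Ba-\|b\|^{p-2}Bb,a-b\rangle\ge c_p\|a-b\|^p$, and then treat $1<p<2$ separately.
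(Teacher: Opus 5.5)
Your part (a) is correct and follows the paper's argument. Your explicit bound $\gamma^{\gf,p}\ge (pM)^{-1}$, which gives $\frac{1}{L+pM}\le\ov\gamma_{L,\tau}$, makes precise the paper's ``for sufficiently small $\gamma$''.

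The gap is in Step 3 of part (b) when $2<p\le q+1$. This forces $q\ge 2$ and includes the canonical cubic case $q=2$, $p=3$. Write $J_p(u):=\|u\|^{p-2}Bu$, $u_i:=y_i-x$, $d:=y_1-y_2$, and $R:=\|u_1\|+\|u_2\|\le 1$. Combining hypomonotonicity with the optimality conditions gives
\[
\frac1\gamma\langle J_p(u_1)-J_p(u_2),d\rangle\le\rho\|d\|^r+CR^{q-1}\|d\|^2 .
\]
Suppose you bound the left side below by $\frac{c_p}{\gamma}\|d\|^p$. The term $\rho\|d\|^r$ is then absorbed, because $r\ge p$. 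The Taylor term is not. Dividing by $\|d\|^2$, you only obtain $(c_p\gamma^{-1}-\rho)\|d\|^{p-2}\le CR^{q-1}$. This bounds $\|d\|$ by something of order $(\gamma R^{q-1})^{1/(p-2)}$, but it does not give $d=0$, since nothing prevents $\|d\|\ll R$.

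The Taylor remainder's Lipschitz modulus $CR^{q-1}$ is small only in $R$. An error of size $\varepsilon\|d\|^2$ cannot be dominated by a modulus $\|d\|^p$ with $p>2$, because $J_p$ is not strongly monotone with a fixed constant near $0$. Relatedly, the hypothesis $p\le q+1$ never enters your estimate, which signals that the wrong inequality is being used.

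The missing ingredient is the weighted monotonicity inequality for the power map,
\[
\langle J_p(u_1)-J_p(u_2),u_1-u_2\rangle\ge\kappa_p(\|u_1\|+\|u_2\|)^{p-2}\|u_1-u_2\|^2 ,
\]
which holds for every $p>1$. The paper uses it for all $p$ at once, with no case split. It produces exactly $R^{p-2}\|d\|^2$ on the left, and both error terms are dominated by this quantity:
\begin{itemize}
\item $R^{q-1}\le R^{p-2}$, because $p\le q+1$ and $R\le1$;
\item $\|d\|^r\le R^{r-2}\|d\|^2\le R^{p-2}\|d\|^2$, because $\|d\|\le R$ and $r\ge\max\{2,p\}$.
\end{itemize}
Hence $\frac{\kappa_p}{\gamma}R^{p-2}\|d\|^2\le(C+\rho)R^{p-2}\|d\|^2$, which forces $d=0$ once $\kappa_p/\gamma>C+\rho$.

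Your treatment of $1<p\le 2$ does work: strong monotonicity on bounded sets for $p<2$, and linearity of $J_2$ at $p=2$. Steps 1, 2 and 4 agree with the paper.
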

\begin{proof}
\ref{thm:calm_prox_regular_tee:a} The $p$-calmness condition implies that $\gf$ is high-order prox-bounded of order $p$ \cite[Theorem~18(e)]{Kabganidiff}. Hence, for sufficiently small $\gamma$,
Theorem~\ref{thm:objective_prox_wellposed} ensures nonemptiness, compactness, and local uniform level-boundedness of the tensor subproblems. 
For $y\neq\ov x$, Proposition~\ref{prop:tensor_basic_comparison} and \eqref{eq:p_calm_tee} ensure 
\[
\Model^{p,q}_\gamma(\ov x,y) \ge \gf(y) +\frac{\gamma^{-1}-L}{p}\|y-\ov x\|^p 
> \gf(\ov x)+ \left(\frac{\gamma^{-1}-L}{p}-M\right)\|y-\ov x\|^p.
\]
Since $0<\gamma<\frac{1}{L+pM}$, the last coefficient is positive, whereas $\Model^{p,q}_\gamma(\ov x,\ov x)=\gf(\ov x)$.
Thus $\Tprox{\gf}{\gamma}{p,q}(\ov x)=\{\ov x\}$, i.e., $\mathcal A_\gamma^{p,q}(\ov x)=\{0\}$, and the result follows
from Theorem~\ref{thm:active_gradient_differentiability}.
\\
\ref{thm:calm_prox_regular_tee:b} Let us define the Taylor-gradient error by
\[
    e_x(y):=\nabla_y\Omega_{x,q}^f(y)-\nabla f(y).
\]
In light of $f\in C^{q+1}(\E)$ and by restricting to a neighborhood of $\ov x$, there exists $C>0$ such that 
\begin{equation}\label{eq:taylor_gradient_local_difference}
\|e_x(y_1)-e_x(y_2)\|_* \le C R^{q-1}\|y_1-y_2\|,
\end{equation}
where $R:=\|y_1-x\|+\|y_2-x\|\le 1$. Indeed, this follows by differentiating $e_x$ with respect to $y$ and using the Taylor remainder for $\nabla^2 f$. 
Let $J_p(u):=\norm{u}^{p-2}Bu$. By the standard monotonicity
inequality for the power map \cite[Eq.~(6)]{Chipot2026Uniqueness}, there exists
$\kappa_p>0$, depending only on $p$, such that
\begin{equation}\label{eq:power_monotonicity_local}
\langle J_p(u_1)-J_p(u_2),u_1-u_2\rangle \ge \kappa_p  (\|u_1\|+\|u_2\|)^{p-2}\|u_1-u_2\|^2.
\end{equation}
Let us choose $\gamma>0$ to be sufficiently small so that Assertion~\ref{thm:calm_prox_regular_tee:a} holds and $\frac{\kappa_p}{\gamma}>C+\rho$.
Since $\Tprox{\gf}{\gamma}{p,q}(\ov x)=\{\ov x\}$, outer semicontinuity and local boundedness imply that all tensor minimizers corresponding to $x$ sufficiently close to $\ov x$
remain arbitrarily close to $\ov x$. Fix such an $x$ and let $y_i\in\Tprox{\gf}{\gamma}{p,q}(x)$, $i=1,2$. Set $u_i:=y_i-x$ and $d:=y_1-y_2$. Applying the first-order optimality condition, we come to
\[
v_i:= -e_x(y_i)-\frac1\gamma J_p(u_i)\in\partial\gf(y_i).
\]
For $x$ sufficiently close to $\ov x$, the points $y_i$, the subgradients $v_i$, and the corresponding function values lie in the neighborhood where $r$-prox-regularity applies. Applying
\eqref{eq:r_prox_regular_tee} twice, with $(y_1,y_2)$ interchanged, gives
$\langle v_1-v_2,d\rangle \ge -\rho\|d\|^r$.
Consequently,
\begin{equation}\label{eq:uniqueness_balance_tee}
\frac{1}{\gamma} \langle J_p(u_1)-J_p(u_2),d\rangle \le \rho\|d\|^r + \left|\langle e_x(y_1)-e_x(y_2),d\rangle \right|.
\end{equation}
Since $\|d\|\le R$, $r\ge p$, $p\le q+1$, and $R\le1$, it holds that
\[
\|d\|^r \le R^{r-2}\|d\|^2 \le R^{p-2}\|d\|^2,
\]
and $R^{q-1}\le R^{p-2}$.
Now, applying \eqref{eq:taylor_gradient_local_difference} and \eqref{eq:power_monotonicity_local} to the inequality \eqref{eq:uniqueness_balance_tee} yields
\[
\frac{\kappa_p}{\gamma} R^{p-2}\|d\|^2 \le (C+\rho)R^{p-2}\|d\|^2.
\]
Since $\frac{\kappa_p}{\gamma}>C+\rho$, this is possible only when $d=0$, i.e., the tensor operator is locally single-valued. Its continuity follows from outer semicontinuity and local boundedness, and Theorem~\ref{thm:active_gradient_differentiability} gives the claimed $C^1$ property of $\fgam{\gf}{p,q}{\gamma}$.
\end{proof}

\subsection{Stationarity of TEE}
In this section, we study the optimization significance of stationary points of TEE. For $p=q+1$, every $y\in\Tprox{\gf}{\gamma}{q+1,q}(x)$ with $y\neq x$ provides a strict descent direction $y-x$ for TEE, without requiring differentiability. This yields an exact correspondence between directional stationarity and fixed points of TOP. We then show that this correspondence can fail when $q<p<q+1$, and we derive quantitative stationarity estimates when $q$ is even.

\begin{theorem}[Directional stationarity and tensor fixed points]\label{thm:stationary_fixed}
Assume that $f\in C^{q+1}(\E)\cap\maj{q}(q+1,L)$, $0<\gamma<L^{-1}$, and suppose that the assumptions of Lemma~\ref{lem:local_compact_reduction} hold around $x$.
Then, for every $y\in\Tprox{\gf}{\gamma}{q+1,q}(x)$,
\begin{equation}\label{eq:directional_tensor_descent}
\bigl(\fgam{\gf}{q+1,q}{\gamma}\bigr)'(x;y-x) \leq -(\gamma^{-1}-L)\norm{y-x}^{q+1}.
\end{equation}
Consequently, the following statements are equivalent:
\begin{enumerate}[label=(\textbf{\alph*}), font=\normalfont\bfseries, leftmargin=0.7cm]
\item\label{thm:stationary_fixed:a} $\bigl(\fgam{\gf}{q+1,q}{\gamma}\bigr)'(x;d)\ge 0$ for every $d\in\E$;

\item\label{thm:stationary_fixed:b}  $\Tprox{\gf}{\gamma}{q+1,q}(x)=\{x\}$;

\item\label{thm:stationary_fixed:c} $\fgam{\gf}{q+1,q}{\gamma}$ is Fr\'echet differentiable at $x$ and $\nabla\fgam{\gf}{q+1,q}{\gamma}(x)=0$.
\end{enumerate}
Each condition implies $x\in\crit(\gf)$.
Moreover, whenever TEE is differentiable at $x$, for every $y\in\Tprox{\gf}{\gamma}{q+1,q}(x)$,
\begin{equation}\label{eq:tee_gradient_step}
\left\langle\nabla\fgam{\gf}{q+1,q}{\gamma}(x), y-x\right\rangle\le-(\gamma^{-1}-L)\|y-x\|^{q+1},
\end{equation}
and
\begin{equation}\label{eq:tee_gradient_step_norm}
\left\|\nabla\fgam{\gf}{q+1,q}{\gamma}(x)\right\|_*\ge(\gamma^{-1}-L)\|y-x\|^q.
\end{equation}
\end{theorem}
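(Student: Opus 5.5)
The plan is to derive \eqref{eq:directional_tensor_descent} directly from the local Danskin formula of Lemma~\ref{fact:danskin_tensor}, bound the resulting $(q+1)$st-order term using the pointwise characterization in Theorem~\ref{thm:fractional-characterization}~\ref{thm:fractional-characterization:d}, and then obtain the equivalences and the gradient estimates as short consequences.

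\emph{Descent estimate.} Since the assumptions of Lemma~\ref{lem:local_compact_reduction} hold around $x$, Lemma~\ref{fact:danskin_tensor} expresses $(\fgam{\gf}{q+1,q}{\gamma})'(x;d)$ as a minimum over $\Tprox{\gf}{\gamma}{q+1,q}(x)$. A minimum is bounded by its value at any admissible point. So for a fixed $y\in\Tprox{\gf}{\gamma}{q+1,q}(x)$ and the direction $d=y-x$, we get
\[
(\fgam{\gf}{q+1,q}{\gamma})'(x;y-x)\le \ip{\nabla_xS_{q,q+1,\gamma}(x,y)}{y-x}.
\]
Inserting \eqref{eq:base_gradient} with $p=q+1$ evaluates this as
\[
\frac1{q!}D^{q+1}f(x)[y-x]^{q+1}-\frac1\gamma\|y-x\|^{q+1},
\]
using $\ip{B(y-x)}{y-x}=\|y-x\|^2$. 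The only delicate point is the sign-indefinite term $D^{q+1}f(x)[y-x]^{q+1}$, and no global Lipschitz continuity of $D^qf$ is available to control it. This is exactly what \eqref{eq:endpoint-characterization} provides: since $f\in\maj{q}(q+1,L)$, we have $D^{q+1}f(x)[h]^{q+1}\le q!L\|h\|^{q+1}$. Hence the right-hand side is at most $-(\gamma^{-1}-L)\|y-x\|^{q+1}$, which is \eqref{eq:directional_tensor_descent}.

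\emph{Equivalences.} I will prove the cycle (a)$\Rightarrow$(b)$\Rightarrow$(c)$\Rightarrow$(a).
\begin{itemize}
\item[(a)$\Rightarrow$(b):] The set $\Tprox{\gf}{\gamma}{q+1,q}(x)$ is nonempty by assumption. If it contained some $y\neq x$, then \eqref{eq:directional_tensor_descent} together with $\gamma^{-1}-L>0$ would give a strictly negative directional derivative in the direction $y-x$. This contradicts (a), so the set equals $\{x\}$.
\item[(b)$\Rightarrow$(c):] We have $\mathcal A_\gamma^{q+1,q}(x)=\{\nabla_xS_{q,q+1,\gamma}(x,x)\}=\{0\}$, because both terms of \eqref{eq:base_gradient} vanish at $y=x$. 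Theorem~\ref{thm:active_gradient_differentiability} then yields Fr\'echet differentiability with zero gradient.
\item[(c)$\Rightarrow$(a):] This is immediate, since $(\fgam{\gf}{q+1,q}{\gamma})'(x;d)=\ip{0}{d}=0$ for every $d$.
\end{itemize}
Condition (b) says that $x$ is a fixed point of the tensor operator, so $x\in\crit(\gf)$ follows from Proposition~\ref{prop:fixed_critical}~\ref{prop:fixed_critical:a}.

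\emph{Gradient estimates.} If TEE is differentiable at $x$, its directional derivative is $\ip{\nabla\fgam{\gf}{q+1,q}{\gamma}(x)}{d}$. Taking $d=y-x$ in \eqref{eq:directional_tensor_descent} gives \eqref{eq:tee_gradient_step} at once. For \eqref{eq:tee_gradient_step_norm}, the dual-norm inequality $|\ip{s}{u}|\le\|s\|_*\|u\|$ turns \eqref{eq:tee_gradient_step} into
\[
\|\nabla\fgam{\gf}{q+1,q}{\gamma}(x)\|_*\,\|y-x\|\ge(\gamma^{-1}-L)\|y-x\|^{q+1}.
\]
Dividing by $\|y-x\|$ when $y\ne x$ gives the claim, and the case $y=x$ is trivial.
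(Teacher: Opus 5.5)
Your proposal is correct and follows the paper's proof essentially step for step. Both arguments use the same four ingredients: the Danskin bound evaluated at the particular minimizer $y$, the endpoint characterization \eqref{eq:endpoint-characterization} to control $D^{q+1}f(x)[y-x]^{q+1}$, the cycle (a)$\Rightarrow$(b)$\Rightarrow$(c)$\Rightarrow$(a) via Theorem~\ref{thm:active_gradient_differentiability}, and the dual Cauchy inequality for the norm bound. The details you add, namely nonemptiness of TOP in (a)$\Rightarrow$(b), $\nabla_xS_{q,q+1,\gamma}(x,x)=0$, and the trivial case $y=x$, are exactly the ones the paper leaves implicit.
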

\begin{proof}
Set $s:=y-x$. In light of Lemma~\ref{fact:danskin_tensor}, we have
\[
\bigl(\fgam{\gf}{q+1,q}{\gamma}\bigr)'(x;s)=\mint{z\in\Tprox{\gf}{\gamma}{q+1,q}(x)}\left\langle\nabla_xS_{q,q+1,\gamma}(x,z),s\right\rangle\leq
\left\langle\nabla_xS_{q,q+1,\gamma}(x,y),s\right\rangle.
\]
Using \eqref{eq:base_gradient} and Theorem~\ref{thm:fractional-characterization}~\ref{thm:fractional-characterization:d}, we conclude
\[
\left\langle\nabla_xS_{q,q+1,\gamma}(x,y),s\right\rangle = \frac1{q!}D^{q+1}f(x)[s]^{q+1}-\gamma^{-1}\norm{s}^{q+1}\leq -(\gamma^{-1}-L)\norm{s}^{q+1},
\]
giving \eqref{eq:directional_tensor_descent}.
If \ref{thm:stationary_fixed:a} holds, \eqref{eq:directional_tensor_descent} forces every tensor minimizer to equal $x$, so \ref{thm:stationary_fixed:b}
follows. If \ref{thm:stationary_fixed:b} holds, then $\mathcal A_\gamma^{q+1,q}(x)=\{0\}$, and Theorem~\ref{thm:active_gradient_differentiability} gives
\ref{thm:stationary_fixed:c}. The implication \ref{thm:stationary_fixed:c}$\Rightarrow$\ref{thm:stationary_fixed:a} is immediate.
Moreover, \ref{thm:stationary_fixed:b} implies $x\in\Fix(\Tprox{\gf}{\gamma}{q+1,q})$, and therefore Proposition~\ref{prop:fixed_critical} gives $x\in\crit(\gf)$.

At differentiable points, the directional derivative equals the gradient pairing, i.e., \eqref{eq:directional_tensor_descent} gives
\eqref{eq:tee_gradient_step}. The dual Cauchy inequality then yields \eqref{eq:tee_gradient_step_norm}.
\end{proof}

The restriction $p=q+1$ in Theorem~\ref{thm:stationary_fixed} is essential.
\begin{example}[Failure of stationarity preservation for $q<p<q+1$]
\label{ex:fractional_stationarity_failure}
Let $\E=\R$ and consider $f(t)=t^4-t^6$ and $g(t)=(t-1)^2+t^8-12t$, with
$q=3$, $p=\frac{7}{2}$, and $\gamma=\frac{1}{4}$. Then $\gf=f+g$ is smooth and coercive. In addition, we have
\[
f(x+h)-\Omega^f_{x,3}(x+h) = h^4\left(1-15\left(x+\frac h5\right)^2-\frac25h^2\right).
\]
As a consequence, Definition~\ref{def:maj} yields
\[
L_{7/2,3}(f)= \frac{7}{2}\supt{s>0}s^{1/2}\left(1-\frac25s^2\right)_+ = \frac{14}{5\,2^{1/4}}<4=\gamma^{-1}.
\]
At $x=0$, $\Omega^f_{0,3}(y)=0$, and 
$\Model^{7/2,3}_\gamma(0,y) = (y-1)^2+y^8-12y+\frac87|y|^{7/2}$.
Its second derivative is $2+56y^6+10|y|^{3/2}>0$, while its first derivative vanishes at $y=1$. Thus
$\Tprox{\gf}{\gamma}{7/2,3}(0)=\{1\}$. In light of Corollary~\ref{cor:bounded_below_wellposed} and
Theorem~\ref{thm:active_gradient_differentiability}, TEE is differentiable at zero. Using \eqref{eq:active_gradient_identity},
\[
\frac{d}{dx}\fgam{\gf}{7/2,3}{\gamma}(0) = \frac{f^{(4)}(0)}{3!}-\gamma^{-1} = 4-4 = 0.
\]
On the other hand, $\gf'(0)=-14\neq 0$. Thus, the stationarity of TEE does not necessarily imply the stationarity of the original objective when $q<p<q+1$, even under smoothness, coercivity, strict majorization and uniqueness of $\Tprox{\gf}{\gamma}{p,q}$ at the reference point. 
\end{example}

For even $q$, the parity structure in Theorem~\ref{thm:diagonal_parity} yields a quantitative relation between the envelope gradient and the original objective's stationarity.

\begin{corollary}[Envelope-gradient stationarity certificate]
\label{cor:gradient_stationarity_certificate}
Let $q$ be even and suppose that $f\in C^{q+1}(\E)\cap\maj{q}(q+1,L)$ and
$0<\gamma<L^{-1}$, and that the assumptions of Lemma~\ref{lem:local_compact_reduction} hold around $x$. Let TEE be differentiable at $x$, and let $y\in\Tprox{\gf}{\gamma}{q+1,q}(x)$. Then
\begin{equation}\label{eq:gradient_step_two_sided}
(\gamma^{-1}-L)\norm{y-x}^q \leq \left\|\nabla\fgam{\gf}{q+1,q}{\gamma}(x)\right\|_* \leq (\gamma^{-1}+L)\norm{y-x}^q.
\end{equation}
Moreover,
\begin{equation}\label{eq:stationarity_step_certificate}
\dist\bigl(0,\partial\gf(y)\bigr) \leq (\gamma^{-1}+L)\norm{y-x}^q, 
\end{equation}
and consequently
\begin{equation}\label{eq:stationarity_gradient_certificate}
\dist\bigl(0,\partial\gf(y)\bigr) \leq \frac{\gamma^{-1}+L}{\gamma^{-1}-L}\left\|\nabla\fgam{\gf}{q+1,q}{\gamma}(x)\right\|_*.
\end{equation}
\end{corollary}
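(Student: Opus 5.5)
The plan has three steps. First obtain the lower bound in \eqref{eq:gradient_step_two_sided} directly from \eqref{eq:tee_gradient_step_norm} in Theorem~\ref{thm:stationary_fixed}. Next obtain the upper bound from the explicit gradient formula \eqref{eq:active_gradient_identity} together with the even-$q$ Lipschitz structure of Theorem~\ref{thm:diagonal_parity}. Finally, derive the subdifferential certificate from the first-order optimality condition of the tensor subproblem at $y$.

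\textbf{Upper bound.} Since TEE is differentiable at $x$, Theorem~\ref{thm:active_gradient_differentiability} gives, for $s:=y-x$,
\[
\nabla\fgam{\gf}{q+1,q}{\gamma}(x)=\frac1{q!}D^{q+1}f(x)[\,\cdot\,,s,\ldots,s]-\frac1\gamma\|s\|^{q-1}Bs.
\]
Because $q$ is even, Theorem~\ref{thm:diagonal_parity}\ref{thm:diagonal_parity:even} shows that $D^qf$ is $q!L$-Lipschitz. Together with $f\in C^{q+1}(\E)$, this gives $\|D^{q+1}f(x)\|\le q!L$; the proof of that theorem derives exactly this operator-norm bound via \eqref{eq:symmetric_tensor_norm}. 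Hence the first term has dual norm at most $L\|s\|^q$. The second term has dual norm $\gamma^{-1}\|s\|^q$, since $\|Bs\|_*=\|s\|$. The triangle inequality then gives the upper bound.

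\textbf{Certificate at $y$.} Apply Fermat's rule to $y\in\Tprox{\gf}{\gamma}{q+1,q}(x)$ with the smooth sum rule:
\[
0\in\nabla_y\Omega^f_{x,q}(y)+\gamma^{-1}\|s\|^{q-1}Bs+\widehat\partial g(y).
\]
Setting $v:=\nabla f(y)-\nabla_y\Omega^f_{x,q}(y)-\gamma^{-1}\|s\|^{q-1}Bs$, we get $v\in\nabla f(y)+\widehat\partial g(y)\subseteq\partial\gf(y)$, using the smooth sum rule for the limiting subdifferential of $f+g$. It remains to bound $\|\nabla f(y)-\nabla_y\Omega^f_{x,q}(y)\|_*$, which I expect to be the main technical step.

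Note that $\nabla_y\Omega^f_{x,q}(y)$ is the $(q-1)$th-order Taylor expansion of $\nabla f$ at $x$. The integral remainder \eqref{eq:integral_remainder}, applied to the one-form $\nabla f$ (equivalently to $u\mapsto\langle\nabla f(u),w\rangle$ for a fixed $w$), gives
\[
\langle\nabla f(y)-\nabla_y\Omega^f_{x,q}(y),w\rangle=\frac1{(q-2)!}\int_0^1(1-t)^{q-2}\bigl(D^qf(x+ts)-D^qf(x)\bigr)[w,s,\ldots,s]\,dt
\]
for $q\ge2$, which holds since $q$ is even. The Lipschitz bound $q!L\,t\|s\|$ on $D^qf$ then yields
\[
\bigl|\langle\nabla f(y)-\nabla_y\Omega^f_{x,q}(y),w\rangle\bigr|\le \frac{q!L}{(q-2)!}\cdot\frac{1}{q(q-1)}\|s\|^q\|w\|=L\|s\|^q\|w\|.
\]
Taking the supremum over $\|w\|\le1$ gives $\dist(0,\partial\gf(y))\le\|v\|_*\le(L+\gamma^{-1})\|s\|^q$, which is \eqref{eq:stationarity_step_certificate}. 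Finally, combining this with the lower bound in \eqref{eq:gradient_step_two_sided} gives \eqref{eq:stationarity_gradient_certificate}.
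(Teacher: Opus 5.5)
Your proposal is correct and follows the paper's own route. You get the lower bound from Theorem~\ref{thm:stationary_fixed}, the upper bound from \eqref{eq:active_gradient_identity} together with $\|D^{q+1}f(x)\|\le q!L$ via Theorem~\ref{thm:diagonal_parity}, and the certificate from the subproblem's optimality condition plus the Taylor-gradient accuracy bound $L\|y-x\|^q$. The one difference is that you derive this accuracy bound explicitly through the integral remainder for $\nabla f$, which is Lemma~\ref{lem:holder_gradient_model_accuracy} with $\nu=1$ and $H_q=q!L$; the paper instead attributes it directly to Theorem~\ref{thm:diagonal_parity}.
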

\begin{proof}
The lower bound in \eqref{eq:gradient_step_two_sided} follows from Theorem~\ref{thm:stationary_fixed}. Since $q$ is even, Theorem~\ref{thm:diagonal_parity} gives
$\operatorname{Lip}(D^qf)\leq q!L$. Since $f\in C^{q+1}(\E)$, this implies $\norm{D^{q+1}f(x)}\leq q!L$. Using \eqref{eq:active_gradient_identity} therefore gives the upper
bound in \eqref{eq:gradient_step_two_sided}. The optimality condition of the tensor subproblem yields $v\in\widehat{\partial}g(y)\subseteq\partial g(y)$ such that
\[
0 = \nabla_y\Omega^f_{x,q}(y) +\frac1\gamma\norm{y-x}^{q-1}B(y-x)+v.
\]
In addition, in light of Theorem~\ref{thm:diagonal_parity}, we have
\[
\norm{\nabla f(y)-\nabla_y\Omega^f_{x,q}(y)}_*\leq L\norm{y-x}^q,
\]
i.e.,
\[
\norm{\nabla f(y)+v}_* \leq (\gamma^{-1}+L)\norm{y-x}^q,
\]
leading to \eqref{eq:stationarity_step_certificate}. Combining this  with the lower bound in \eqref{eq:gradient_step_two_sided} gives
\eqref{eq:stationarity_gradient_certificate}.
\end{proof}

Step-size stationarity estimates for general Taylor-like models are well established; see, e.g., \cite{DrusvyatskiyIoffeLewis2021}. The preceding corollary uses the tensor-majorant condition for
$p=q+1$ with even $q$ to relate this stationarity measure directly to the gradient of TEE.
If $p=q+1$ and TEE is differentiable at $x$, we define 
\begin{equation}\label{eq:nonlinear_tensor_scaling}
\mathcal G_x(s):= \frac1\gamma\norm{s}^{q-1}Bs - \frac1{q!}D^{q+1}f(x)[\,\cdot\,,s,\ldots,s].
\end{equation}
Consequently, for every $y\in\Tprox{\gf}{\gamma}{q+1,q}(x)$, it can be concluded that
\begin{equation}\label{eq:implicit_tensor_gradient_step}
\mathcal G_x(y-x) = -\nabla\fgam{\gf}{q+1,q}{\gamma}(x).
\end{equation}
This is an implicit nonlinear gradient relation, and we do not assert the invertibility of $\mathcal G_x$. For $q=1$, it reduces to the familiar scaled-gradient relation for the forward--backward envelope; see \cite{Stella17,kabgani2025fundamental}.
\subsection{Generic tensor method: Envelope-based analysis}
Let us consider the generic tensor method, for which the iteration is given by
\begin{equation}\label{eq:exact_tensor_iteration}
    x^{k+1}\in\Tprox{\gf}{\gamma}{p,q}(x^k).
\end{equation}
The comparison inequalities established above show that TEE provides a natural Lyapunov function for this iteration.

\begin{proposition}[Envelope decrease]\label{prop:tensor_envelope_lyapunov_general}
Let $p>1$ and $q\ge1$. Suppose that $f\in\maj{q}(p,L)$ and let $0<\gamma<L^{-1}$. Then, for every $x\in\E$ and $y\in\Tprox{\gf}{\gamma}{p,q}(x)$,
\begin{equation}\label{eq:tensor_envelope_lyapunov_general}
\fgam{\gf}{p,q}{\gamma}(y)+\frac{\gamma^{-1}-L}{p}\|y-x\|^p \le \fgam{\gf}{p,q}{\gamma}(x).
\end{equation}
\end{proposition}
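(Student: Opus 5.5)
The plan is to chain two inequalities that are already available. The first is the quantitative descent estimate \eqref{eq:tensor_descent_quantitative} of Proposition~\ref{prop:tensor_basic_comparison}~\ref{prop:tensor_basic_comparison:c}. The second is the upper bound $\fgam{\gf}{p,q}{\gamma}(y)\le\gf(y)$ from Proposition~\ref{prop:tensor_basic_comparison}~\ref{prop:tensor_basic_comparison:b}, applied at the new point $y$ instead of $x$.

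Fix $x\in\E$ and $y\in\Tprox{\gf}{\gamma}{p,q}(x)$. The membership of $y$ in TOP forces $\fgam{\gf}{p,q}{\gamma}(x)=\Model^{p,q}_\gamma(x,y)$, and this value is finite. Since $0<\gamma<L^{-1}\le L^{-1}$, inequality \eqref{eq:tensor_descent_quantitative} yields
\[
\gf(y)+\frac{\gamma^{-1}-L}{p}\|y-x\|^p\le\fgam{\gf}{p,q}{\gamma}(x).
\]
In particular $\gf(y)<+\infty$, so $y\in\dom g$. Next we use the feasible choice $y'=y$ in the infimum defining $\fgam{\gf}{p,q}{\gamma}(y)$. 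Because $\Omega^f_{y,q}(y)=f(y)$ and the regularization term vanishes at $y'=y$, this gives $\fgam{\gf}{p,q}{\gamma}(y)\le\Model^{p,q}_\gamma(y,y)=\gf(y)$. Substituting this bound into the displayed inequality gives \eqref{eq:tensor_envelope_lyapunov_general}.

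There is no real obstacle here; the only point needing care is handling infinite values. If $\fgam{\gf}{p,q}{\gamma}(x)=+\infty$, the claim is trivial. By the definition of TOP as an argmin, however, nonemptiness only makes sense when the infimum is attained, and the quantities involved are then finite whenever $\gf$ is proper. The value $-\infty$ is excluded by the lower bound $\inf\gf\le\fgam{\gf}{p,q}{\gamma}(x)$. It also helps to note that the strict inequality $\gamma<L^{-1}$ makes the coefficient $\frac{\gamma^{-1}-L}{p}$ positive, so the estimate is a genuine sufficient decrease. That positivity is what will later be summed along the iteration \eqref{eq:exact_tensor_iteration}.
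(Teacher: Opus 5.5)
Your proof is correct and is exactly the paper's argument: combine the descent estimate \eqref{eq:tensor_descent_quantitative} from Proposition~\ref{prop:tensor_basic_comparison}~\ref{prop:tensor_basic_comparison:c} with the bound $\fgam{\gf}{p,q}{\gamma}(y)\le\gf(y)$. One small correction to your side remarks: $\fgam{\gf}{p,q}{\gamma}(x)$ is finite because the infimum is attained at $y$ and $g$ is proper. The inequality $\inf\gf\le\fgam{\gf}{p,q}{\gamma}(x)$ does not give this, since it excludes $-\infty$ only when $\gf$ is bounded below, which is not assumed here.
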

\begin{proof}
Invoking Proposition~\ref{prop:tensor_basic_comparison} leads to $\fgam{\gf}{p,q}{\gamma}(x) \ge \gf(y)+\frac{\gamma^{-1}-L}{p}\|y-x\|^p$. Since $\fgam{\gf}{p,q}{\gamma}(y)\leq\gf(y)$, the result follows.
\end{proof}

\begin{theorem}[Asymptotic properties of the exact tensor iteration]\label{thm:exact_tensor_iteration_majorant}
Let $p>1$ and $q\ge1$. Suppose that $f\in\maj{q}(p,L)$, that $\gf$ is bounded below, and that $0<\gamma<L^{-1}$. Let $\{x^k\}_{k\in\Nz}$ be generated by
\eqref{eq:exact_tensor_iteration}. Then 
\begin{enumerate}[label=(\textbf{\alph*}), font=\normalfont\bfseries, leftmargin=0.7cm]
\item\label{thm:exact_tensor_iteration_majorant:a} the iteration is well defined and
\begin{equation}\label{eq:tensor_step_summability_general}
\sum_{k=0}^{\infty}\|x^{k+1}-x^k\|^p \le \frac{p}{\gamma^{-1}-L}\,\left(\fgam{\gf}{p,q}{\gamma}(x^0)-\inf\gf\right)<+\infty.
\end{equation}
In particular, $\|x^{k+1}-x^k\|\to 0$;

\item\label{thm:exact_tensor_iteration_majorant:b} $\fgam{\gf}{p,q}{\gamma}(x^{k+1}) \leq \gf(x^{k+1}) \leq \fgam{\gf}{p,q}{\gamma}(x^k)\leq\gf(x^k)$;

\item\label{thm:exact_tensor_iteration_majorant:c} there exists $\ell\in\R$ such that $\fgam{\gf}{p,q}{\gamma}(x^k)\to\ell$ and $\gf(x^k)\to\ell$;

\item\label{thm:exact_tensor_iteration_majorant:d} every cluster point $\ov x$ of $\{x^k\}_{k\in\Nz}$ satisfies
\[
\ov x\in \Fix(\Tprox{\gf}{\gamma}{p,q}) \subseteq \crit(\gf),
\]
and $\gf(\ov x) = \fgam{\gf}{p,q}{\gamma}(\ov x) = \ell$.
\end{enumerate}
\end{theorem}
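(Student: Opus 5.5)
Since $\gf$ is bounded below, Corollary~\ref{cor:bounded_below_wellposed} applies for every $0<\gamma<L^{-1}$. Thus $\Tprox{\gf}{\gamma}{p,q}(x)$ is nonempty and compact for every $x$, the iteration \eqref{eq:exact_tensor_iteration} is well defined, and the conclusions of Theorem~\ref{thm:objective_prox_wellposed} are available, in particular the outer semicontinuity in Theorem~\ref{thm:objective_prox_wellposed}~\ref{thm:objective_prox_wellposed:f} and the continuity of TEE.

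For \ref{thm:exact_tensor_iteration_majorant:b}, apply Proposition~\ref{prop:tensor_basic_comparison}~\ref{prop:tensor_basic_comparison:c} with $x=x^k$, $y=x^{k+1}$. This gives $\gf(x^{k+1})\le\fgam{\gf}{p,q}{\gamma}(x^k)\le\gf(x^k)$, and the bound $\fgam{\gf}{p,q}{\gamma}(x^{k+1})\le\gf(x^{k+1})$ is the same estimate at $x^{k+1}$.

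For \ref{thm:exact_tensor_iteration_majorant:a}, use Proposition~\ref{prop:tensor_envelope_lyapunov_general}:
\[
\frac{\gamma^{-1}-L}{p}\|x^{k+1}-x^k\|^p\le \fgam{\gf}{p,q}{\gamma}(x^k)-\fgam{\gf}{p,q}{\gamma}(x^{k+1}).
\]
Summing telescopically from $0$ to $N$ and using $\fgam{\gf}{p,q}{\gamma}\ge\inf\gf$ (Proposition~\ref{prop:tensor_basic_comparison}~\ref{prop:tensor_basic_comparison:b}) gives \eqref{eq:tensor_step_summability_general}. In particular $\|x^{k+1}-x^k\|\to0$.

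For \ref{thm:exact_tensor_iteration_majorant:c}, the chain in \ref{thm:exact_tensor_iteration_majorant:b} interleaves the two sequences as
\[
\gf(x^0)\ge\fgam{\gf}{p,q}{\gamma}(x^0)\ge\gf(x^1)\ge\fgam{\gf}{p,q}{\gamma}(x^1)\ge\cdots.
\]
This is a single nonincreasing sequence bounded below by $\inf\gf$, so it converges to some $\ell\in\R$, and both subsequences converge to the same $\ell$.

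For \ref{thm:exact_tensor_iteration_majorant:d}, I expect the main subtlety. Let $x^{k_j}\to\ov x$. By \ref{thm:exact_tensor_iteration_majorant:a}, $x^{k_j+1}\to\ov x$ as well. Since $x^{k_j+1}\in\Tprox{\gf}{\gamma}{p,q}(x^{k_j})$, Theorem~\ref{thm:objective_prox_wellposed}~\ref{thm:objective_prox_wellposed:f} with constant $\gamma_k=\gamma$ shows that the cluster point $\ov x$ of $\{x^{k_j+1}\}$ lies in $\Tprox{\gf}{\gamma}{p,q}(\ov x)$. Hence $\ov x\in\Fix(\Tprox{\gf}{\gamma}{p,q})$, and Proposition~\ref{prop:fixed_critical}~\ref{prop:fixed_critical:a} gives $\ov x\in\crit(\gf)$.

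For the values, continuity of TEE gives $\fgam{\gf}{p,q}{\gamma}(\ov x)=\lim_j\fgam{\gf}{p,q}{\gamma}(x^{k_j})=\ell$. Since $\ov x$ is a fixed point, evaluating the model at $y=\ov x$ gives
\[
\fgam{\gf}{p,q}{\gamma}(\ov x)=\Model^{p,q}_\gamma(\ov x,\ov x)=\gf(\ov x).
\]
Therefore $\gf(\ov x)=\fgam{\gf}{p,q}{\gamma}(\ov x)=\ell$. This avoids needing continuity of $\gf$ along the iterates, which fails in general because $g$ is only lsc. The fixed-point identity closes this gap.
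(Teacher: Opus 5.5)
Your proposal is correct and follows the paper's proof essentially step for step. The ingredients are the same: well-posedness from Corollary~\ref{cor:bounded_below_wellposed}; telescoping the Lyapunov inequality of Proposition~\ref{prop:tensor_envelope_lyapunov_general} for (a); Proposition~\ref{prop:tensor_basic_comparison} for (b); monotone convergence for (c); and, for (d), outer semicontinuity from Theorem~\ref{thm:objective_prox_wellposed}~\ref{thm:objective_prox_wellposed:f}, continuity of TEE, and the fixed-point identity $\fgam{\gf}{p,q}{\gamma}(\ov x)=\Model^{p,q}_\gamma(\ov x,\ov x)=\gf(\ov x)$.

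The only cosmetic difference is in (c). You merge the two value sequences into one interlaced monotone sequence, whereas the paper first takes the limit of the real-valued envelope values and then sandwiches $\gf(x^k)$ using (b). The paper's ordering sidesteps the possibility $\gf(x^0)=+\infty$ at the head of your interlaced sequence, though this is harmless in your version as well.
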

\begin{proof}
\ref{thm:exact_tensor_iteration_majorant:a} Since $\gf$ is bounded below, Corollary~\ref{cor:bounded_below_wellposed} ensures that \eqref{eq:exact_tensor_iteration} is well defined.
In light of Proposition~\ref{prop:tensor_envelope_lyapunov_general}, we have
\[
\fgam{\gf}{p,q}{\gamma}(x^{k+1})+\frac{\gamma^{-1}-L}{p}\norm{x^{k+1}-x^k}^p \leq \fgam{\gf}{p,q}{\gamma}(x^k).
\]
Summing up both sides of this inequality from $k=0$ to $N-1$ and using $\fgam{\gf}{p,q}{\gamma}\geq\inf_{\E}\gf$, it is evident that
\[
\frac{\gamma^{-1}-L}{p} \sum_{k=0}^{N-1}\norm{x^{k+1}-x^k}^p \leq \fgam{\gf}{p,q}{\gamma}(x^0)-\inf\gf.
\]
Letting $N\to+\infty$ proves \eqref{eq:tensor_step_summability_general}, and hence $x^{k+1}-x^k\to 0$.
\\
\ref{thm:exact_tensor_iteration_majorant:b} It follows from Proposition~\ref{prop:tensor_basic_comparison}~\ref{prop:tensor_basic_comparison:b} and \ref{prop:tensor_basic_comparison:c}.
\\
\ref{thm:exact_tensor_iteration_majorant:c}
Since $\{\fgam{\gf}{p,q}{\gamma}(x^k)\}_{k\in\Nz}$ is nonincreasing and bounded below, it converges to some $\ell\in\R$, and Assertion~\ref{thm:exact_tensor_iteration_majorant:b} yields $\gf(x^k)\to\ell$. 
\\
\ref{thm:exact_tensor_iteration_majorant:d}
Let $x^{k_j}\to\ov x$. Since $x^{k_j+1}-x^{k_j}\to 0$, $x^{k_j+1}\to\ov x$. Theorem~\ref{thm:objective_prox_wellposed}~\ref{thm:objective_prox_wellposed:f} gives
$\ov x \in\Tprox{\gf}{\gamma}{p,q}(\ov x)$, and Proposition~\ref{prop:fixed_critical} implies
$\ov x\in\crit(\gf)$. Finally, the continuity of TEE gives $\fgam{\gf}{p,q}{\gamma}(\ov x)=\ell$, while the fixed-point identity gives 
$\gf(\ov x) = \fgam{\gf}{p,q}{\gamma}(\ov x)$.
\end{proof}

Next, we derive a stationarity estimate without imposing a global H\"older condition on $D^qf$.
For $\epsilon>0$, call $x\in\dom\gf$ an $\epsilon$-stationary point of $\gf$ if
$\dist(0,\partial\gf(x))\leq\epsilon$.
For a sequence $\{x^k\}_{k\in\Nz}$, define
\[
N_\epsilon := \min\left\{ N\geq1: \mint{0\leq k<N} \dist\bigl(0,\partial\gf(x^{k+1})\bigr)\leq\epsilon
\right\}.
\]

\begin{theorem}[Stationarity iteration complexity on a bounded trajectory]\label{thm:bounded_sequence_complexity}
Let $q<p\le q+1$. Suppose that $f\in C^q(\E)\cap\maj{q}(p,L)$, $\inf_\E\gf> -\infty$, and $0<\gamma<L^{-1}$. Let $\{x^k\}_{k\in\Nz}$ be generated by \eqref{eq:exact_tensor_iteration} and suppose that it is bounded. Let $K\subset\E$ be a compact convex set containing all iterates, and suppose that there exists $C_K<+\infty$ such that
\begin{equation}\label{eq:local_gradient_model_accuracy}
\|\nabla f(y)-\nabla_y\Omega^f_{x,q}(y)\|_*\le C_K\|y-x\|^{p-1} \quad x,y\in K.
\end{equation}
Set $b_K:=\gamma^{-1}+C_K$. Then, for every $k\in\Nz$,
\begin{equation}\label{eq:local_stationarity_each_iteration}
\dist\bigl(0,\partial\gf(x^{k+1})\bigr)\le b_K\|x^{k+1}-x^k\|^{p-1}.
\end{equation}
Moreover, for every integer $N\ge1$,
\begin{equation}\label{eq:local_stationarity_complexity}
\mint{0\le k<N}\dist\bigl(0,\partial\gf(x^{k+1})\bigr)\le b_K
\left(\frac{p(\fgam{\gf}{p,q}{\gamma}(x^0)-\inf_\E\gf)}{(\gamma^{-1}-L)N}\right)^{\frac{p-1}{p}},
\end{equation}
and
\[
N_\epsilon\le\max\left\{1,\left\lceil\frac{p(\fgam{\gf}{p,q}{\gamma}(x^0)-\inf_\E\gf)}{\gamma^{-1}-L}\left(\frac{b_K}{\epsilon}\right)^{\frac{p}{p-1}}\right\rceil\right\}.
\]
Consequently, $N_\epsilon=\mathcal O\!\left(\epsilon^{-p/(p-1)}\right)$.
\end{theorem}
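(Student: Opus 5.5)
The plan is to combine three ingredients: the first-order optimality condition of the tensor subproblem at each iterate, the local gradient-accuracy assumption \eqref{eq:local_gradient_model_accuracy}, and the summability estimate of Theorem~\ref{thm:exact_tensor_iteration_majorant}\ref{thm:exact_tensor_iteration_majorant:a}. The per-iteration bound \eqref{eq:local_stationarity_each_iteration} comes first, the min-over-$N$ bound second, and the count $N_\epsilon$ last.

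\textbf{Per-iteration bound.} Since $\gf$ is bounded below and $0<\gamma<L^{-1}$, Corollary~\ref{cor:bounded_below_wellposed} makes the iteration well defined. Fix $k$. The point $x^{k+1}$ minimizes $y\mapsto \Model^{p,q}_\gamma(x^k,y)$, and the terms $\Omega^f_{x^k,q}$ and $\frac{1}{p\gamma}\|\cdot-x^k\|^p$ are $C^1$ in $y$ (here $p>1$). Fermat's rule with the smooth sum rule, exactly as in Proposition~\ref{prop:fixed_critical}\ref{prop:fixed_critical:a}, then yields some $v\in\widehat\partial g(x^{k+1})\subseteq\partial g(x^{k+1})$ with
\[
0=\nabla_y\Omega^f_{x^k,q}(x^{k+1})+\frac1\gamma\|x^{k+1}-x^k\|^{p-2}B(x^{k+1}-x^k)+v .
\]
Since $f$ is $C^1$, the limiting sum rule gives $\nabla f(x^{k+1})+v\in\partial\gf(x^{k+1})$, so
\[
\dist\bigl(0,\partial\gf(x^{k+1})\bigr)\le \|\nabla f(x^{k+1})-\nabla_y\Omega^f_{x^k,q}(x^{k+1})\|_*+\frac1\gamma\|x^{k+1}-x^k\|^{p-1}.
\]
Both $x^k$ and $x^{k+1}$ lie in $K$, so \eqref{eq:local_gradient_model_accuracy} bounds the first term by $C_K\|x^{k+1}-x^k\|^{p-1}$. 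Also $\|B u\|_*=\|u\|$, so the second term is $\gamma^{-1}\|x^{k+1}-x^k\|^{p-1}$. Together these give \eqref{eq:local_stationarity_each_iteration} with $b_K=\gamma^{-1}+C_K$.

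\textbf{Complexity bound.} By the per-iteration bound and monotonicity of $t\mapsto t^{(p-1)/p}$,
\[
\min_{0\le k<N}\dist\bigl(0,\partial\gf(x^{k+1})\bigr)\le b_K\Bigl(\min_{0\le k<N}\|x^{k+1}-x^k\|^p\Bigr)^{\frac{p-1}{p}}\le b_K\Bigl(\frac1N\sum_{k=0}^{N-1}\|x^{k+1}-x^k\|^p\Bigr)^{\frac{p-1}{p}}.
\]
Inserting the bound \eqref{eq:tensor_step_summability_general} on the sum gives \eqref{eq:local_stationarity_complexity}. Requiring the right-hand side of \eqref{eq:local_stationarity_complexity} to be at most $\epsilon$ means $N\ge \frac{p(\fgam{\gf}{p,q}{\gamma}(x^0)-\inf\gf)}{\gamma^{-1}-L}(b_K/\epsilon)^{p/(p-1)}$. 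The smallest such integer, with at least $1$ to keep $N\ge1$, therefore bounds $N_\epsilon$, which gives the stated ceiling and hence $\mathcal O(\epsilon^{-p/(p-1)})$.

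\textbf{Main obstacle.} The only delicate step is justifying the subdifferential sum rule and the Fermat step with a merely lsc, extended-valued $g$. The route above handles this by working first with $\widehat\partial g$, where adding a $C^1$ function is exact, and only afterwards passing to the inclusion $\widehat\partial g\subseteq\partial g$ and the limiting sum rule for a $C^1$ plus lsc function. No condition $p\le q+1$ is needed in this argument beyond what is encoded in \eqref{eq:local_gradient_model_accuracy}.
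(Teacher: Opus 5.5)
Your proof is correct and follows the paper's argument essentially step for step. You use Fermat's rule with the smooth sum rule at $x^{k+1}$, then the accuracy bound \eqref{eq:local_gradient_model_accuracy} together with $\|Bu\|_*=\|u\|$, and then the min-versus-average estimate from the summability bound \eqref{eq:tensor_step_summability_general}; you also spell out the routine inversion giving $N_\epsilon$, which the paper leaves implicit.
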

\begin{proof}
For each $k$, optimality of $x^{k+1}$ gives $v^{k+1}\in\widehat\partial g(x^{k+1})\subseteq\partial g(x^{k+1})$ such that
\[
0=\nabla_y\Omega^f_{x^k,q}(x^{k+1})+\gamma^{-1}\|x^{k+1}-x^k\|^{p-2}B(x^{k+1}-x^k)+v^{k+1}.
\]
Since $\nabla f(x^{k+1})+v^{k+1}\in\partial\gf(x^{k+1})$, \eqref{eq:local_gradient_model_accuracy} gives \eqref{eq:local_stationarity_each_iteration}. Theorem~\ref{thm:exact_tensor_iteration_majorant} yields
\[
\mint{0\le k<N}\|x^{k+1}-x^k\|^p\le\frac{p(\fgam{\gf}{p,q}{\gamma}(x^0)-\inf_\E\gf)}{(\gamma^{-1}-L)N},
\]
and the remaining assertions follow.
\end{proof}

\begin{remark}
If $f$ is $C^{q+1}$ on a neighborhood of $K$, then \eqref{eq:local_gradient_model_accuracy} holds with
\[
C_K=\frac{H_K}{q!}D_K^{q-p+1},\quad H_K:=\max_{z\in K}\|D^{q+1}f(z)\|,
\quad D_K:=\max\{1,\operatorname{diam}K\}.
\]
If $\gf$ is level-bounded, the bounded-trajectory assumption follows automatically from the interlacing descent in Theorem~\ref{thm:exact_tensor_iteration_majorant}. Thus, the theorem does
not require global H\"older or Lipschitz continuity of $D^qf$. The global descent follows from the one-sided condition $f\in\maj{q}(p,L)$, while the stationarity estimate requires only gradient-model accuracy on the region visited by the trajectory.
\end{remark}

For comparison with standard global smoothness assumptions, we record the following estimate.

\begin{lemma}[H\"older Taylor-gradient accuracy]\label{lem:holder_gradient_model_accuracy}
Let $q\ge1$ and $\nu\in(0,1]$. Suppose that $f\in C^q(\E)$ and $D^qf$ is globally $\nu$-H\"older continuous with constant $H_q\ge0$. Set
$L_{q,\nu}:=H_q\frac{\Gamma(\nu+1)}{\Gamma(q+\nu)}$. Then $f\in\maj{q}(q+\nu,L_{q,\nu})$, and for all $x,y\in\E$,
\begin{equation}\label{eq:holder_gradient_model_accuracy}
\|\nabla f(y)-\nabla_y\Omega_{x,q}^f(y)\|_* \le L_{q,\nu}\|y-x\|^{q+\nu-1}.
\end{equation}
\end{lemma}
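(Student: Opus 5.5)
The first claim, $f\in\maj{q}(q+\nu,L_{q,\nu})$, is exactly Fact~\ref{lem:holder_sufficient}. So the real work is the gradient accuracy estimate \eqref{eq:holder_gradient_model_accuracy}. I would get it by applying the same integral-remainder argument to the gradient rather than to $f$.

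Fix $x,y\in\E$, set $h:=y-x$, and pick a direction $w\in\E$ with $\norm{w}\le1$. Differentiating the Taylor polynomial gives
\[
\ip{\nabla_y\Omega^f_{x,q}(y)}{w}=\sum_{j=1}^{q}\frac{1}{(j-1)!}D^jf(x)[w,h,\ldots,h].
\]
This is the order-$(q-1)$ Taylor polynomial of the function $z\mapsto Df(z)[w]$ at $x$, evaluated at $y$.

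When $q\ge2$, I would apply Fact~\ref{lem:dm_remainder} with $q-1$ in place of $q$ to the $C^{q-1}$ function $\phi(z):=Df(z)[w]$. Its $(q-1)$st derivative is $D^{q-1}\phi(z)[h]^{q-1}=D^qf(z)[w,h,\ldots,h]$. This yields
\[
\ip{\nabla f(y)-\nabla_y\Omega^f_{x,q}(y)}{w}=\frac{1}{(q-2)!}\int_0^1(1-t)^{q-2}\big(D^qf(x+th)-D^qf(x)\big)[w,h,\ldots,h]\,dt.
\]
The integrand is bounded in absolute value by $\norm{D^qf(x+th)-D^qf(x)}\,\norm{h}^{q-1}\le H_q t^\nu\norm{h}^{q-1+\nu}$, using the operator norm and the Hölder assumption. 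By the beta--gamma identity \eqref{eq:beta-gamma},
\[
\int_0^1(1-t)^{q-2}t^\nu\,dt=\frac{\Gamma(\nu+1)\Gamma(q-1)}{\Gamma(q+\nu)}.
\]
Multiplying by $1/(q-2)!=1/\Gamma(q-1)$ leaves exactly $\Gamma(\nu+1)/\Gamma(q+\nu)$. Taking the supremum over $\norm{w}\le1$ then gives the $\norm{\cdot}_*$ bound with constant $L_{q,\nu}$.

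For $q=1$ the integral formula does not apply as written. There, however, the claim is immediate: $\nabla_y\Omega^f_{x,1}(y)=\nabla f(x)$, and the Hölder assumption gives $\norm{\nabla f(y)-\nabla f(x)}_*\le H_1\norm{h}^\nu$. This is consistent with the general formula, since $\Gamma(\nu+1)/\Gamma(1+\nu)=1$.

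The only delicate step is the identification of the gradient of $\Omega^f_{x,q}$ with a Taylor polynomial of $Df(\cdot)[w]$. It relies on the symmetry of $D^jf(x)$, which makes the derivative of $\frac1{j!}D^jf(x)[h]^j$ in the direction $w$ equal to $\frac{1}{(j-1)!}D^jf(x)[w,h^{j-1}]$. I would also need to check that $\phi\in C^{q-1}$ with the stated derivative, which holds since $f\in C^q$. The rest is bookkeeping with the Gamma function.
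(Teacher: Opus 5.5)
Your proposal is correct and follows essentially the same route as the paper: it cites Fact~\ref{lem:holder_sufficient} for the membership and treats $q=1$ directly as $\nu$-H\"older continuity of $\nabla f$. For $q\ge2$ it applies the integral remainder to $\nabla f$ at order $q-1$ (you scalarize via the directions $w$) and then uses \eqref{eq:beta-gamma} with $1/(q-2)!=1/\Gamma(q-1)$ to obtain exactly $L_{q,\nu}$.
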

\begin{proof}
The membership $f\in\maj{q}(q+\nu,L_{q,\nu})$ is Fact~\ref{lem:holder_sufficient}. For $q=1$, \eqref{eq:holder_gradient_model_accuracy} is exactly the $\nu$-H\"older continuity of $\nabla f$. Assume $q\ge2$ and set $h=y-x$. Applying the integral Taylor remainder formula to the map $\nabla f$ leads to
\begin{align*}
 &\nabla f(x+h)-\nabla_y\Omega_{x,q}^f(x+h)
 \\&\quad= \frac1{(q-2)!}\int_0^1(1-t)^{q-2}\bigl(D^qf(x+th)-D^qf(x)\bigr)[\,\cdot\,,h,\ldots,h]dt,
\end{align*}
i.e.,
\[
\|\nabla f(x+h)-\nabla_y\Omega_{x,q}^f(x+h)\|_* \le \frac{H_q}{(q-2)!} \int_0^1(1-t)^{q-2}t^\nu dt\,\|h\|^{q+\nu-1},
\]
i.e., \eqref{eq:beta-gamma} reduces the coefficient to $H_q\Gamma(\nu+1)/\Gamma(q+\nu)=L_{q,\nu}$, proving the claim.
\end{proof}

\begin{corollary}[Complexity under global H\"older regularity]\label{thm:matched_holder_tensor_complexity}
Let $q\ge1$ and $\nu\in(0,1]$, and set $p:=q+\nu$ and $L_{q,\nu}:=H_q\frac{\Gamma(\nu+1)}{\Gamma(q+\nu)}$.
Suppose that $f\in C^q(\E)$, that $D^qf$ is globally $\nu$-H\"older continuous with constant $H_q\ge0$, that $\gf$ is bounded below, and that $0<\gamma<L_{q,\nu}^{-1}$. Define $b_{\gamma,\nu}:=\gamma^{-1}+L_{q,\nu}$. Let $\{x^k\}_{k\in\Nz}$ be generated by
\eqref{eq:exact_tensor_iteration}. Then, for every integer $N\geq1$,
\begin{equation}\label{eq:matched_holder_stationarity_complexity}
\mint{0\le k<N}\dist\bigl(0,\partial\gf(x^{k+1})\bigr) \le b_{\gamma,\nu}
\left(\frac{p(\fgam{\gf}{p,q}{\gamma}(x^0)-\inf\gf)}{(\gamma^{-1}-L_{q,\nu})N}\right)^{\frac{p-1}{p}}.
\end{equation}
Moreover, 
\[
N_\epsilon \leq \max\left\{1,\left\lceil \frac{p(\fgam{\gf}{p,q}{\gamma}(x^0)-\inf\gf)}{\gamma^{-1}-L_{q,\nu}}\left(\frac{b_{\gamma,\nu}}{\epsilon}\right)^{\frac{p}{p-1}}\right\rceil\right\},
\]
and therefore $N_\epsilon=\mathcal O\left(\epsilon^{-\frac{q+\nu}{q+\nu-1}}\right)$.
\end{corollary}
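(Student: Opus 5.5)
The plan is to repeat the argument of Theorem~\ref{thm:bounded_sequence_complexity}, with the local accuracy constant $C_K$ replaced by the global one supplied by Lemma~\ref{lem:holder_gradient_model_accuracy}. This removes the need for a bounded trajectory.

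\textbf{Setup and well-posedness.} Lemma~\ref{lem:holder_gradient_model_accuracy} gives $f\in\maj{q}(p,L_{q,\nu})$ with $p=q+\nu\in(q,q+1]$, together with the global estimate \eqref{eq:holder_gradient_model_accuracy}:
\[
\|\nabla f(y)-\nabla_y\Omega^f_{x,q}(y)\|_*\le L_{q,\nu}\|y-x\|^{p-1}\quad \forall x,y\in\E .
\]
Since $\gf$ is bounded below and $0<\gamma<L_{q,\nu}^{-1}$, Corollary~\ref{cor:bounded_below_wellposed} shows the iteration \eqref{eq:exact_tensor_iteration} is well defined. Theorem~\ref{thm:exact_tensor_iteration_majorant} then yields the summability bound
\[
\sum_{k\ge0}\|x^{k+1}-x^k\|^p\le \frac{p\,\bigl(\fgam{\gf}{p,q}{\gamma}(x^0)-\inf\gf\bigr)}{\gamma^{-1}-L_{q,\nu}} .
\]

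\textbf{Per-iteration stationarity bound.} Fermat's rule for the tensor subproblem at $x^k$ gives some $v^{k+1}\in\widehat\partial g(x^{k+1})\subseteq\partial g(x^{k+1})$ with
\[
0=\nabla_y\Omega^f_{x^k,q}(x^{k+1})+\gamma^{-1}\|x^{k+1}-x^k\|^{p-2}B(x^{k+1}-x^k)+v^{k+1}.
\]
The smooth sum rule gives $\nabla f(x^{k+1})+v^{k+1}\in\partial\gf(x^{k+1})$. Adding and subtracting $\nabla_y\Omega^f_{x^k,q}(x^{k+1})$ and applying the global accuracy estimate, we get
\[
\dist\bigl(0,\partial\gf(x^{k+1})\bigr)\le b_{\gamma,\nu}\|x^{k+1}-x^k\|^{p-1}.
\]
This is \eqref{eq:local_stationarity_each_iteration} with $b_K$ replaced by $b_{\gamma,\nu}$, and it holds without any compact set $K$.

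\textbf{Complexity.} The minimum over $0\le k<N$ of $\|x^{k+1}-x^k\|^p$ is at most the average, hence at most the summability bound divided by $N$. Since $t\mapsto t^{(p-1)/p}$ is monotone, the minimum of $\|x^{k+1}-x^k\|^{p-1}$ is at most that quantity raised to the power $(p-1)/p$. Combined with the per-iteration bound, this gives \eqref{eq:matched_holder_stationarity_complexity}.

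To bound $N_\epsilon$, let $\Delta:=\fgam{\gf}{p,q}{\gamma}(x^0)-\inf\gf$. The right-hand side of \eqref{eq:matched_holder_stationarity_complexity} is at most $\epsilon$ as soon as
\[
N\ge \frac{p\,\Delta}{\gamma^{-1}-L_{q,\nu}}\left(\frac{b_{\gamma,\nu}}{\epsilon}\right)^{\frac{p}{p-1}} .
\]
Taking the ceiling, and using $\max\{1,\cdot\}$ to cover $N\ge1$, gives the stated bound on $N_\epsilon$, and $p/(p-1)=(q+\nu)/(q+\nu-1)$ gives the order.

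No step is genuinely hard; the only point to check is the sign and dual-norm bookkeeping in the per-iteration bound. The regularizer term has dual norm exactly $\gamma^{-1}\|x^{k+1}-x^k\|^{p-1}$, because $\|Bu\|_*=\|u\|$.
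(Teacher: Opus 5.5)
Your proposal is correct and takes the same approach as the paper. The paper also combines the per-iteration bound $\dist\bigl(0,\partial\gf(x^{k+1})\bigr)\le b_{\gamma,\nu}\|x^{k+1}-x^k\|^{p-1}$, obtained from the optimality condition and the global accuracy estimate of Lemma~\ref{lem:holder_gradient_model_accuracy}, with the summability bound of Theorem~\ref{thm:exact_tensor_iteration_majorant}; you additionally spell out the min-versus-average and ceiling steps that the paper leaves implicit.
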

\begin{proof}
Using the optimality of $x^{k+1}$ and invoking Lemma~\ref{lem:holder_gradient_model_accuracy} imply
\[
\dist\bigl(0,\partial\gf(x^{k+1})\bigr) \leq b_{\gamma,\nu}\norm{x^{k+1}-x^k}^{p-1}.
\]
The conclusion follows from Theorem~\ref{thm:exact_tensor_iteration_majorant}.
\end{proof}

\begin{remark}[The case $p=q+1$]\label{rem:matched_holder_diagonal_endpoint}
Suppose that $\gf$ is bounded below. If $q$ is even and $f\in C^{q+1}(\E)\cap\maj{q}(q+1,L)$, then Theorem~\ref{thm:diagonal_parity} gives $\operatorname{Lip}(D^qf)\leq q!L$. Hence, the preceding corollary applies with $H_q=q!L$, $L_{q,1}=L$, and $b_{\gamma,1}=\gamma^{-1}+L$, and yields $N_\epsilon = \mathcal O\left(\epsilon^{-\frac{q+1}{q}}\right)$. If $q$ is odd, membership in $\maj{q}(q+1,L)$ generally does not provide a global bound on $\norm{D^{q+1}f}$. Nevertheless,
Theorem~\ref{thm:bounded_sequence_complexity} applies whenever the generated sequence is bounded.
\end{remark}

\begin{remark}[Connection with high-order regularization complexity]
\label{rem:classical_complexity_connection}
For $p=q+\nu$, the order $\mathcal O\left(\epsilon^{-\frac{q+\nu}{q+\nu-1}}\right)$ coincides with the standard first-order stationarity exponent for regularization methods based on a $q$th-order Taylor model under $\nu$-H\"older continuity of $D^qf$; see, e.g., \cite{CartisGouldToint2019Universal,CartisGouldToint2020Sharp}. The above bound is an iteration-complexity result for exact global solutions of fixed-parameter tensor subproblems. It does not count unsuccessful trials, parameter updates, derivative evaluations, or the cost of solving the subproblems, and therefore should not be interpreted as an evaluation-complexity bound. For $p>q+1$, Theorem~\ref{thm:exact_tensor_iteration_majorant} still applies whenever $f\in\maj{q}(p,L)$. However, Theorem~\ref{thm:superdiagonal} shows that finite tensor majorization is highly restrictive in this regime. The $q$th-order Taylor polynomial must already globally majorize $f$, and the optimal majorization constant is zero.
\end{remark}

\section{Conclusion} \label{sec:conclusion}
We developed a tensor envelope (TEE) for composite optimization based on a regularized $q$th-order Taylor model of the smooth term. We first studied the associated tensor descent lemma and characterized its behavior in the regimes $q<p<q+1$, $p=q+1$, and $p>q+1$. Specifically, the first two regimes are governed by one-sided H\"older-type information on $D^qf$, whereas for $p>q+1$ a finite majorization exists only if the $q$th-order Taylor model already majorizes $f$ globally. We then established the fundamental variational properties of the tensor operator and envelope, including well-definedness, continuity, preservation of global minimizers, level-boundedness, and
differentiability of TEE. For $p=q+1$, we characterized directional stationarity of TEE through singleton tensor fixed points and related these points to critical points of the original objective.
Finally, using TEE as a Lyapunov function for the exact tensor iteration, we established asymptotic regularity, convergence of the objective and envelope values, and criticality of cluster points. For bounded trajectories with $q<p\le q+1$, we further showed that an $\epsilon$-stationary point is reached within $\mathcal{O}(\epsilon^{-p/(p-1)})$ iterations. Notably, neither the tensor descent lemma nor this complexity bound requires global Lipschitz or H\"older continuity of $D^qf$. The theoretical results presented here provide a foundation for future numerical developments, which we pursue in ongoing work \cite{kabgani2026boosted}.

\newpage
\bibliographystyle{siamplain}
\bibliography{references}
\end{document}